\documentclass[11pt,reqno]{amsart}
\makeatletter
\@ifundefined{subjclassname@2020}{%
  \@namedef{subjclassname@2020}{\textup{2020} Mathematics Subject Classification}}{}
\makeatother
\usepackage[T1]{fontenc}
\usepackage{lmodern}
\usepackage{amsmath,amssymb,mathtools,mathrsfs}
\usepackage[letterpaper,textwidth=6.35in,textheight=9.25in,centering]{geometry}
\usepackage{microtype}
\usepackage{enumitem}
\usepackage{xcolor}
\usepackage{needspace}
\usepackage[hidelinks]{hyperref}
\hypersetup{pdftitle={Sharp Moser-Onofri inequality on the CR sphere under zero-moment constraints and   stability of CR Moser-Onofri inequality},pdfauthor={Lu Chen and Guozhen Lu},pdfsubject={Global energy-distance stability from Theorem 1.1 by conformal centering, following Carlen, without a mass normalization},pdfkeywords={Moser-Onofri inequality, CR sphere, zero-moment constraints, extremal functions, Hardy space, sharp stability}}
\numberwithin{equation}{section}
\newtheorem{theorem}{Theorem}[section]
\newtheorem{lemma}[theorem]{Lemma}
\newtheorem{proposition}[theorem]{Proposition}
\newtheorem{corollary}[theorem]{Corollary}
\theoremstyle{remark}
\newtheorem{remark}[theorem]{Remark}
\newtheorem*{remark*}{Remark}
\theoremstyle{plain}
\newtheorem*{theoremA}{Theorem A}
\newtheorem*{theoremB}{Theorem B}
\newtheorem*{theoremC}{Theorem C}
\newcommand{\R}{\mathbb R}
\newcommand{\C}{\mathbb C}
\newcommand{\N}{\mathbb N}
\newcommand{\Sph}{\mathbb S^{2n+1}}
\newcommand{\Ball}{\mathbb B^{n+1}}
\newcommand{\PR}{\mathcal P_{\R}}
\newcommand{\Dn}{\mathcal D_n}
\newcommand{\A}{A'_Q}
\newcommand{\Ls}{\mathcal L}
\newcommand{\PP}{\pi_{\mathcal P}}
\newcommand{\E}{\mathcal E}
\newcommand{\Jf}{\mathcal J_{\alpha,n}}
\newcommand{\Jone}{\mathcal J_{1,n}}
\newcommand{\Jhalf}{\mathcal J_{1/2,n}}
\newcommand{\Lzero}{\mathcal L_0}
\newcommand{\mean}[1]{\langle #1\rangle_{\sigma}}
\newcommand{\ip}[2]{\langle #1,#2\rangle}
\newcommand{\norm}[1]{\lVert #1\rVert}
\newcommand{\abs}[1]{\lvert #1\rvert}
\newcommand{\dd}{\,d\sigma}
\newcommand{\cstr}{\mathbb J}
\newcommand{\Rcal}{\mathcal R}
\newcommand{\Scal}{\mathcal S}
\newcommand{\Tcal}{\mathcal T}

\newcommand{\Hn}{\mathbb H^n}

\DeclareMathOperator{\Rea}{Re}
\DeclareMathOperator{\Ima}{Im}
\DeclareMathOperator{\Div}{div}
\DeclareMathOperator{\supp}{supp}

\DeclareMathOperator{\Aut}{Aut}
\setlist[enumerate]{leftmargin=*,label=\textup{(\arabic*)},itemsep=3pt}
\allowdisplaybreaks[2]
\title[CR Moser--Onofri under zero-moment constraints]{Sharp Moser-Onofri inequality on the CR sphere under zero-moment constraints and its application to the stability of the CR Moser-Onofri inequality}
\author{Lu Chen}
\address{Key Laboratory of Algebraic Lie Theory and Analysis of Ministry of Education, School of Mathematics and Statistics, Beijing Institute of Technology, Haidian, 100081, Beijing, China}
\email{chenlu5818804@163.com}
\author{Guozhen Lu}
\address{Department of Mathematics, University of Connecticut, Storrs, Connecticut 06269, USA}
\email{guozhen.lu@uconn.edu}
\subjclass[2020]{Primary 26D10, 32V20; Secondary 46E35, 32A35}
\keywords{Moser--Onofri inequality, CR sphere, zero-moment constraints, extremal functions, Hardy space, sharp stability}
\date{}
\begin{document}
\begin{abstract}
In this paper, we establish the sharp Moser--Onofri inequality in CR sphere under zero-moment constraints. By combining a decomposition into low and high spherical harmonics with the known CR localization estimates, we first prove the existence of extremals of the subcritical Moser--Onofri inequality under zero-moment constraints. We then derive their Euler--Lagrange equation, establish their smoothness, and eliminate the moment multiplier by means of the infinitesimal CR conformal action. Using the summation identity \eqref{eq:shift-sum} on the Hardy space $H^2(\Ball)$, we show that the extremals of subcritical Moser-Onofri inequality in CR sphere under zero-moment constraints are constants.
By a limiting argument, we deduce the sharp Moser--Onofri inequality in CR sphere under zero-moment constraints. As an application, we establish the stability of Moser--Onofri inequality in CR sphere with the best stability constant.

\end{abstract}
\maketitle
\begingroup
\raggedbottom
\tableofcontents
\clearpage
\endgroup

\section{Introduction and main results}
\subsection{The sharp Sobolev, Hardy--Littlewood--Sobolev, and Moser-Onofri inequalities}
The main purpose of this paper is to establish the sharp Moser-Onofri inequality on the standard CR sphere under the zero-moment condition for the exponential density and to derive a global stability inequality on the energy space that measures the deviation from the family of extremals. We begin by recalling the sharp Sobolev and Hardy-Littlewood-Sobolev (HLS) inequalities and the corresponding Moser-Onofri inequality. On ordinary spheres, the logarithmic endpoint inequalities include Onofri's inequality in dimension two and Beckner's higher-dimensional inequality \cite{Onofri,Beckner}. On the CR sphere, the corresponding inequalities are formulated in terms of the horizontal differential structure and the homogeneous dimension. We recall their precise forms and normalizations below.

Throughout the paper, $n\geq1$. The Heisenberg group associated with $\Sph\subset\C^{n+1}$ is
\[
\mathbb H^{n}=\C^{n}\times\R,
\qquad
(z,t)(z',t')=(z+z',t+t'+2\Ima(z\cdot\overline{z'})).
\]
Its homogeneous dimension is $Q=2n+2$. The dilations and homogeneous norm are
\[
\delta_r(z,t)=(rz,r^2t),\qquad
\abs{(z,t)}=(\abs z^4+t^2)^{1/4}.
\]
Writing $z_j=x_j+iy_j$, we set
\[
X_j=\partial_{x_j}+2y_j\partial_t,\qquad
Y_j=\partial_{y_j}-2x_j\partial_t,\qquad
\abs{\nabla_H u}^2=\sum_{j=1}^{n}(\abs{X_ju}^2+\abs{Y_ju}^2).
\]
We denote by $S^1(\mathbb H^n)$ the completion of $C_c^\infty(\mathbb H^n)$ under the norm $\norm{\nabla_H f}_2$.
Folland and Stein developed the Sobolev space theory on the Heisenberg group \cite{FS}. Jerison and Lee determined the sharp constant and classified the extremals \cite{JL87,JL88}. In the above normalization, their inequality takes the following form. It agrees with the Sobolev inequality in \cite{FL} after multiplying both sides by four.

\begin{theoremA}
For every $f\in S^1(\mathbb H^n)$, the Sobolev inequality on the Heisenberg group states
\[
\int_{\mathbb H^n}\abs{\nabla_H f}^2\,dz\,dt
\geq \frac{4\pi n^2}{(2^{2n}n!)^{1/(n+1)}}
\norm f_{2Q/(Q-2)}^2.
\]
The constant $4\pi n^2/(2^{2n}n!)^{1/(n+1)}$ is sharp.
Equality holds if and only if
\[
f(z,t)=K\abs{w+z\cdot\mu+\lambda}^{-n},
\]
where $w=t+i\abs z^2$, $K,\lambda\in\C$, $\mu\in\C^n$, and
$\Ima\lambda>\abs\mu^2/4$.
\end{theoremA}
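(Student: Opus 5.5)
The plan is to follow the route of Jerison--Lee, in the form simplified by Frank--Lieb. The idea is to move the problem from $\mathbb H^n$ to the CR sphere $\Sph$ by the Cayley transform, and to deduce the Sobolev inequality as the endpoint $\lambda\to Q-2$ of a sharp Hardy--Littlewood--Sobolev inequality on $\Sph$.

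\textbf{Step 1: transfer to the sphere.} The Cayley transform $\mathcal C:\mathbb H^n\to\Sph\setminus\{(0,\dots,0,-1)\}$ has Jacobian $J=|\,\cdot\,|$ comparable to $|1+|z|^2-it|^{-(n+1)}$ up to a constant. Setting $F=J^{-(Q-2)/(2Q)}f\circ\mathcal C^{-1}$, the horizontal energy becomes the conformally invariant CR Yamabe form
\[
\int_{\Sph}\Bigl(|\nabla_b F|^2+\tfrac{n^2}{4}|F|^2\Bigr)\dd ,
\]
up to fixed constants, while $\|f\|_{2Q/(Q-2)}$ is carried isometrically to $\|F\|_{2Q/(Q-2)}$ on $\Sph$. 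The quadratic form is diagonal in the bigraded spherical harmonics $\mathcal H_{j,k}$, with eigenvalues proportional to $(j+n/2)(k+n/2)$. Under $\mathcal C$, the functions $|w+z\cdot\mu+\lambda|^{-n}$ correspond exactly to the orbit of the constants under the CR automorphism group of $\Sph$. So it suffices to show that constants are the only optimizers on the sphere. The tracking of normalizing constants in this step is routine but tedious.

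\textbf{Step 2: existence of an optimizer.} I would work with the dual HLS inequality with kernel $|1-\zeta\cdot\bar\eta|^{-\lambda}$ for $0<\lambda<Q$, treating the critical case $\lambda=Q-2$ either directly or by a limiting argument. The group $\Aut(\Sph)$ is noncompact, so compactness of optimizing sequences fails. Following Frank--Lieb, I would restore it through the center-of-mass normalization: after applying a suitable automorphism, one may assume $\int_{\Sph}\zeta|F|^{p}\dd=0$, which is a Brouwer-degree argument. Existence of an optimizer then follows either by concentration--compactness, or more cheaply by rearrangement. In the second route, one first reduces to functions depending only on $(|\zeta_{n+1}|,\arg\zeta_{n+1})$ and then appeals to Jerison--Lee's existence result.

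\textbf{Step 3: identification of the optimizers (the main obstacle).} Let $F$ be an optimizer normalized to have zero center of mass. The second variation is nonnegative along the perturbations $F\mapsto F(1+\epsilon\,\zeta_j)$ and $F\mapsto F(1+\epsilon\,\bar\zeta_j)$, where $\zeta_j\in\mathcal H_{1,0}$ and $\bar\zeta_j\in\mathcal H_{0,1}$. Summing over $j$ and using $\sum_j|\zeta_j|^2=1$ with the commutation identities for $\nabla_b$, this nonnegativity becomes the inequality
\[
\sum_j\bigl(\mathcal E[\zeta_jF]+\mathcal E[\bar\zeta_jF]\bigr)\geq \text{const}\cdot\mathcal E[F].
\]
The computation is delicate: I expect to need the explicit eigenvalues $(j+n/2)(k+n/2)$ and the exact form of the HLS kernel. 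The expected outcome is that the inequality forces the projection of $F$ onto $\mathcal H_{j,k}$ to vanish for all $(j,k)\neq(0,0)$, hence $F$ is constant. Undoing the normalizing automorphism then gives all optimizers in the stated form. Finally, evaluating the inequality at $F\equiv1$ produces the constant $4\pi n^2/(2^{2n}n!)^{1/(n+1)}$. This uses $|\Sph|=2\pi^{n+1}/n!$, and the factor $4$ relative to Frank--Lieb comes from our normalization of $X_j,Y_j$.
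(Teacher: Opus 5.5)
The paper does not prove Theorem~A. It quotes it from Jerison--Lee and only remarks that it agrees with Frank--Lieb after multiplying by four, so your outline has to stand on its own. It does not yet do so: Step~3, which carries the entire content, is stated only as an expectation, and with the perturbations you chose it does not close.

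Write $p=2Q/(Q-2)$, and normalize the centered minimizer by $\norm F_p=1$, $\mathcal S=\mathcal E[F]$. The second variation of $\mathcal E[F]/\norm F_p^2$ gives, for every $\phi$ with $\Rea\int\abs F^{p-2}\bar F\phi\dd=0$,
\[
\mathcal E[\phi]\geq\mathcal S\int\bigl(\abs F^{p-2}\abs\phi^2+(p-2)\abs F^{p-4}(\Rea\bar F\phi)^2\bigr)\dd .
\]
For $\phi=\zeta_jF$ or $\phi=\bar\zeta_jF$, the right side is $\mathcal S\int\abs F^p(\abs{\zeta_j}^2+(p-2)(\Rea\zeta_j)^2)\dd$. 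Since $\sum_j(\Rea\zeta_j)^2$ is not constant, summing does not produce $\text{const}\cdot\mathcal E[F]$. If you symmetrize by also using $i\zeta_jF$ and $i\bar\zeta_jF$, you get exactly $2\mathcal E[F]+n\norm F_2^2\geq p\,\mathcal E[F]$, that is, $\mathcal E[F]\leq\frac{n^2}{2}\norm F_2^2$. This does not force constancy: it holds for $1+\delta x_1$ with $\delta$ small. So the inequality is genuinely too weak, and neither the HLS kernel nor the eigenvalues $(j+\frac n2)(k+\frac n2)$ can extract more from it. The HLS kernel plays no role in this step anyway.

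The missing idea is to use the real coordinates $x_1,\dots,x_Q$ as multipliers. This is where the sharp weight $p-1$, rather than $p/2$, enters. After centering, $\int x_k\abs F^p\dd=0$, so $\phi=x_kF$ is admissible and gives $\mathcal E[x_kF]\geq(p-1)\mathcal S\int x_k^2\abs F^p\dd$. Normalize $\nabla_b$ by $\int\abs{\nabla_bW}^2\dd=\ip W{\Ls W}$. Then $\sum_kx_k^2=1$, $\sum_kx_k\nabla_bx_k=0$, and $\sum_k\abs{\nabla_bx_k}^2\equiv n/2$, the last from $\Ls x_k=\frac n2x_k$ and $U(n+1)$-invariance. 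Hence $\sum_k\mathcal E[x_kF]=\mathcal E[F]+\frac n2\norm F_2^2$, and therefore $\frac n2\norm F_2^2\geq(p-2)\mathcal E[F]=\frac2n\mathcal E[F]$. Thus $\mathcal E[F]\leq\frac{n^2}{4}\norm F_2^2$, i.e.\ $\int\abs{\nabla_bF}^2\dd=0$, and $F$ is constant, even for complex $F$.

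Two smaller points also need repair.
\begin{itemize}
\item In Step~2, the center of mass is what Frank--Lieb use for classification; for existence you still need concentration compactness, or you can quote Jerison--Lee. The rearrangement alternative is not available: no symmetrization decreasing the horizontal energy is known on $\Hn$ or $\Sph$. That is precisely why Jerison--Lee and Frank--Lieb avoid it, so the reduction to functions of $(\abs{\zeta_{n+1}},\arg\zeta_{n+1})$ is unjustified.
\item In Step~1, the Cayley Jacobian is $2^{Q-1}\abs{1+\abs z^2+it}^{-Q}$, not a multiple of $\abs{1+\abs z^2-it}^{-(n+1)}$. Only with the correct power is $J^{-(Q-2)/(2Q)}$ a multiple of $\abs{1+\abs z^2+it}^{n}$, which is what turns $\abs{w+i}^{-n}$ into a constant on $\Sph$.
\end{itemize}
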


Frank and Lieb established a family of sharp fractional inequalities \cite{FL}, including the following Hardy-Littlewood-Sobolev inequality.

\begin{theoremB}
Let $0<\lambda<Q$. For every $f\in L^{2Q/(2Q-\lambda)}(\mathbb H^n)$, the following inequality holds:
\[
\left|\iint_{\mathbb H^n\times\mathbb H^n}
\frac{\overline{f(u)}f(v)}{\abs{u^{-1}v}^{\lambda}}\,du\,dv\right|
\leq
\left(\frac{\pi^{n+1}}{2^{n-1}n!}\right)^{\lambda/Q}
\frac{n!\,\Gamma((Q-\lambda)/2)}{\Gamma((2Q-\lambda)/4)^2}
\norm f_{2Q/(2Q-\lambda)}^2.
\]
Equality holds if and only if
\[
f(u)=cF(\delta_r(a^{-1}u))
\]
for some $c\in\C$, $r>0$, and $a\in\mathbb H^n$, where
\[
F(u)=((1+\abs z^2)^2+t^2)^{-(2Q-\lambda)/4},\qquad u=(z,t).
\]
\end{theoremB}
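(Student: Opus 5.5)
The aim is to prove Theorem B, the sharp Frank--Lieb Hardy--Littlewood--Sobolev inequality on $\mathbb H^n$, and the plan follows the Frank--Lieb strategy. The first step is to transfer the problem to the CR sphere $\Sph$ through the Cayley transform. The kernel $\abs{u^{-1}v}^{-\lambda}$ becomes, up to the conformal factor, the kernel $\abs{1-\zeta\cdot\bar\eta}^{-\lambda/2}$ on $\Sph$, and the $L^{2Q/(2Q-\lambda)}$ norm is invariant under this transfer. The inequality then reads
\[
\iint \overline{g(\zeta)}\,\abs{1-\zeta\cdot\bar\eta}^{-\lambda/2}\,g(\eta)\,d\zeta\, d\eta \le C\,\norm g_p^2,\qquad p=\tfrac{2Q}{2Q-\lambda}.
\]
This operator is diagonal on the bigraded spherical harmonics $\mathcal H_{j,k}$. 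The Funk--Hecke-type formula gives explicit eigenvalues $E_{j,k}$, which are ratios of Gamma functions. The top eigenvalue is $E_{0,0}$, and it produces exactly the stated constant.

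The second step is to establish existence of an optimizer. Because of the noncompact symmetry group (translations and dilations, i.e.\ $\Aut$ of the sphere), I would use a concentration-compactness or rearrangement-free argument. Frank--Lieb's approach gives a maximizer once one shows that the supremum strictly exceeds the value attained on concentrating sequences. Alternatively, one can work in the subcritical range via Lions' lemma on $\mathbb H^n$.

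The third step is the key one. After composing with a CR automorphism, one may assume the optimizer $g$ satisfies the zero-moment condition $\int \zeta_j\,\abs g^p=0$. The existence of such a centering automorphism follows from a degree/fixed-point argument on the ball $\Ball$. The second variation at a maximizer is nonpositive. I would test it with perturbations $g+\epsilon\,\zeta_j\abs g^{p-1}$-type, i.e.\ with $\abs g^{p-2}\bar g$ multiplied by coordinate functions. Summing over $j$ and using $\sum\abs{\zeta_j}^2=1$ yields an inequality involving the eigenvalues $E_{0,0},E_{1,0},E_{0,1}$ and the higher ones. The gap inequality (Frank--Lieb's identity $\sum_j \langle \zeta_j h,K\zeta_j h\rangle$ compared with $\langle h,Kh\rangle$) then forces $\abs g^{p-2}g$ to lie in $\mathcal H_{0,0}$, i.e.\ to be constant.

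The main obstacle is this last step: verifying the eigenvalue monotonicity needed so that the summed second variation is strictly negative unless $h$ is constant. This requires the explicit Gamma-function computation of $E_{j,k}$ and the shift inequality relating $E_{j+1,k}+E_{j,k+1}$ to $E_{j,k}$. Finally, undoing the automorphism and the Cayley transform gives the equality cases $cF(\delta_r(a^{-1}u))$. Conversely, these are extremal because the constant on the sphere attains $E_{0,0}$.
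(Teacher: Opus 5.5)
The paper gives no proof of Theorem~B; it quotes it from \cite{FL}. Your outline follows the architecture of that proof: Cayley transfer, existence, centering by a CR automorphism, second variation, and the bigraded spectral computation. The decisive third step, however, does not work as you set it up.

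Since the kernel is positive, $\abs{\ip g{Kg}}\leq\ip{\abs g}{K\abs g}$, so one may take the maximizer $g\geq0$. Normalize $\norm g_p=1$ and set $\mathcal C=\ip g{Kg}$; then $g>0$ from the Euler--Lagrange equation $Kg=\mathcal C g^{p-1}$. Maximality gives, for real $\phi$ with $\int g^{p-1}\phi=0$,
\[
\ip{\phi}{K\phi}\leq(p-1)\,\mathcal C\int g^{p-2}\phi^{2}.
\]
Your direction $\phi=x_jg^{p-1}$ fails on two counts:
\begin{enumerate}
\item Admissibility means $\int x_jg^{2p-2}=0$. That is not what the centering lemma provides, since $g^{2p-2}$ is not transported as a density by $g\mapsto J_\tau^{1/p}g\circ\tau$.
\item The right-hand sides add up to $(p-1)\mathcal C\int g^{3p-4}$, which bears no relation to $\ip h{Kh}$ for $h=g^{p-1}$. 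So no comparison of $\sum_j\ip{x_jh}{Kx_jh}$ with $\ip h{Kh}$ results.
\end{enumerate}
The test functions must be the coordinates times the maximizer itself, $\phi=x_jg$ for $1\leq j\leq Q$. Admissibility is then exactly the centering $\int x_jg^p=0$, and the weights $\int x_j^2g^p$ sum to $1$. One obtains
\[
\sum_j\ip{x_jg}{Kx_jg}\leq(p-1)\ip g{Kg}.
\]
Multiplying $g^{p-1}$ by coordinates is correct only in the dual minimization problem for $K^{-1}$ on $L^{2Q/\lambda}$. Its optimizer is $u=g^{p-1}$ and $\abs u^{2Q/\lambda}=g^p$, so the same centering applies there.

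The spectral step also differs from your ``shift inequality''. The operator $\phi\mapsto\sum_jx_jK(x_j\phi)$ has kernel $\Rea(\zeta\cdot\bar\eta)\abs{1-\zeta\cdot\bar\eta}^{-\lambda/2}$ and is $U(n+1)$-invariant. Multiplication by $\zeta_i$ maps $\mathcal H_{j,k}$ into $\mathcal H_{j+1,k}\oplus\mathcal H_{j,k-1}$, with squared-norm fractions (summed over $i$) $(n+j)/(n+j+k)$ and $k/(n+j+k)$; the case of $\bar\zeta_i$ is symmetric. Hence the operator acts on $\mathcal H_{j,k}$ by
\[
\mu_{j,k}=\frac{(n+j)E_{j+1,k}+jE_{j-1,k}+(n+k)E_{j,k+1}+kE_{j,k-1}}{2(n+j+k)}.
\]
The lowering terms $E_{j-1,k},E_{j,k-1}$ are indispensable, because raising a degree lowers the eigenvalue. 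Write
\[
E_{j,k}\propto\frac{\Gamma(j+\lambda/4)\Gamma(k+\lambda/4)}{\Gamma(j+Q/2-\lambda/4)\Gamma(k+Q/2-\lambda/4)}.
\]
Then the ratio $r_j=E_{j+1,k}/E_{j,k}=(j+\lambda/4)/(j+Q/2-\lambda/4)$ is increasing in $j$, smaller than $1$, and satisfies $r_0=p-1$. It follows that $(n+j)r_j+j/r_{j-1}\geq(n+2j)r_0$, with equality only for $j=0$ (where the term $j/r_{j-1}$ is absent). This gives $\mu_{0,0}=(p-1)E_{0,0}$ and $\mu_{j,k}>(p-1)E_{j,k}$ for all other $(j,k)$.

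Thus the summed second variation is \emph{nonnegative} and vanishes only on constants. It is this sign, not ``strictly negative'', that contradicts maximality unless $g$ is constant.

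Finally, your existence criterion is vacuous here. Dilations are symmetries, so concentrating sequences lose nothing. What is needed is compactness modulo the group: a nonzero weak limit after renormalization, with dichotomy excluded by strict superadditivity of $t\mapsto t^{2/p}$ for $p<2$.
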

\Needspace{20\baselineskip}
The Heisenberg group is conformally equivalent to the standard complex sphere, or CR sphere,
\[
 \Sph=\left\{\xi=(\xi_1,\ldots,\xi_{n+1})\in\C^{n+1}:|\xi|=1\right\}.
\]
The Cayley transform $\mathcal C:\Hn\to\Sph\setminus\{(0,\ldots,0,-1)\}$ is
\[
 \mathcal C(z,t)
 =\left(
 \frac{2z}{1+|z|^2+it},
 \frac{1-|z|^2-it}{1+|z|^2+it}
 \right),
\]
with inverse
\[
 \mathcal C^{-1}(\xi)
 =\left(
 \frac{\xi'}{1+\xi_{n+1}},
 \operatorname{Im}\frac{1-\xi_{n+1}}{1+\xi_{n+1}}
 \right),\qquad \xi'=(\xi_1,\ldots,\xi_n).
\]
With respect to the  Lebesgue measure on $\Hn$ and standard surface measure $d\xi$ on $\Sph$, its Jacobian is
\[
 |J_{\mathcal C}(z,t)|
 =2^{Q-1}\bigl((1+|z|^2)^2+t^2\bigr)^{-Q/2}
 =\frac12|1+\xi_{n+1}|^Q,
 \qquad \xi=\mathcal C(z,t).
\]
These formulas and the conformal transformation of the sharp inequalities can be found in \cite{JL88,FL,ChenLuTangWang2026}.

For $j=1,\ldots,n+1$, introduce the tangential fields
\begin{align*}
 T_j&=\frac{\partial}{\partial\xi_j}
 -\overline\xi_j\sum_{k=1}^{n+1}\xi_k\frac{\partial}{\partial\xi_k},
 &
 \overline T_j&=\frac{\partial}{\partial\overline\xi_j}
 -\xi_j\sum_{k=1}^{n+1}\overline\xi_k\frac{\partial}{\partial\overline\xi_k}.
\end{align*}
The order-two conformal sub-Laplacian and its quadratic form are
\begin{align*}
 \mathcal L_{\mathrm{conf}}
 &=-\frac12\sum_{j=1}^{n+1}
 (T_j\overline T_j+\overline T_jT_j)+\frac{n^2}{4},\\
 \mathcal E_2[u]&=\int_{\Sph}\overline u\,\mathcal L_{\mathrm{conf}}u\,d\xi,
\end{align*}
where $S^1(\Sph)$ is the  domain of the quadratic form $\mathcal E_2$; see \cite{Folland,JL88,FL}. We distinguish this operator from the nonnegative sublaplacian $\Ls$ used below:
\[
 \Ls=-\frac12\sum_{j=1}^{n+1}(T_j\overline T_j+\overline T_jT_j),
 \qquad \mathcal L_{\mathrm{conf}}=\Ls+\frac{n^2}{4}.
\]
The Cayley transform identifies the inequality in Theorem~A with the corresponding sharp Sobolev inequality on the CR sphere and identifies the inequality in Theorem~B with the sharp HLS family on the CR sphere. The bispherical harmonic decomposition diagonalizes the associated CR intertwining operators \cite{Folland,FL}.

For the exponential endpoint of this Sobolev--HLS theory on the CR sphere, Cohn and Lu \cite{CL} proved sharp Moser--Trudinger inequalities on complex spheres. Their proof uses a sharp representation formula involving complex tangential derivatives, and they also derived Moser--Onofri type inequalities as applications. Branson, Fontana, and Morpurgo identified the conditional intertwinor and established the conformally invariant Beckner-Onofri inequality on the space of real CR-pluriharmonic functions \cite{BFM}.

From now on we use the
probability surface measure $d\sigma=d\xi/|\Sph|$. With this measure the Jacobian of the Cayley transform is $|\Sph|^{-1}|J_{\mathcal C}|$. For functions in the energy space introduced below, we write
\[
\mean F=\int_{\Sph}F\dd,\qquad
\E(F)=\ip F{\A F},\qquad Z(F)=\int_{\Sph}e^F\dd.
\]
Thus $Z(F)$ denotes the integral of $e^F$ with respect to the normalized surface measure. Here $\A$ is the conditional intertwinor on $\PR$. Its eigenvalue on the pure holomorphic and antiholomorphic modes of degree $k\geq1$ is
$\lambda_k=\Gamma(k+n+1)/\Gamma(k)$, and it maps constant functions to zero. The space $\PR$ and the form domain $\Dn$ are defined in Section~\ref{sec:spectrum}. In this normalization, the sharp Moser--Onofri inequality takes the following form \cite{BFM}.

\Needspace{20\baselineskip}
\begin{theoremC}
For every real function $F\in\Dn$,
\[
\frac{1}{2(n+1)!}\E(F)+\mean F-\log Z(F)\geq0.
\]
The coefficient $1/(2(n+1)!)$ is sharp. The inequality is invariant under the conformal action
\[
F\longmapsto F^\tau=F\circ\tau+\log J_\tau,
\qquad \tau\in\Aut(\Sph).
\]
Equality holds if and only if
\[
F(\zeta)=c+\log J_\tau(\zeta)
\quad\text{for some }c\in\R\text{ and }\tau\in\Aut(\Sph).
\]
In particular, under the normalization $Z(F)=1$, equality holds if and only if $F=\log J_\tau$ for some $\tau\in\Aut(\Sph)$. The group $\Aut(\Sph)$ and its positive boundary Jacobian $J_\tau$ are defined in Subsection~\ref{sec:automorphisms}.
\end{theoremC}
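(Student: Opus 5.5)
The plan is to follow the paper's own strategy: first prove an improved inequality under zero-moment constraints, then remove the constraints with the conformal action.

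\emph{Step 1: conformal invariance.} For $\tau\in\Aut(\Sph)$ we check directly that
\[
\frac{1}{2(n+1)!}\E(F)+\mean F-\log Z(F)
\]
is unchanged under $F\mapsto F^\tau$. We have $Z(F^\tau)=Z(F)$ by the change of variables $\int e^{F\circ\tau}J_\tau\dd=\int e^F\dd$. For the remaining two terms we need
\[
\frac{1}{2(n+1)!}\E(F^\tau)+\mean{F^\tau}
=\frac{1}{2(n+1)!}\E(F)+\mean F .
\]
This should follow from the intertwining relation for $\A$, namely $\A(F\circ\tau)=(\A F)\circ\tau\cdot J_\tau$ up to the inhomogeneous term. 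Equivalently, $\tfrac{1}{(n+1)!}\A\log J_\tau$ plus a constant equals $J_\tau-1$. We then expand the quadratic form.

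\emph{Step 2: centering.} Following Carlen's conformal centering, for any real $F\in\Dn$ we choose $\tau$ so that the probability measure $e^{F^\tau}\dd/Z(F)$ has vanishing first moments, $\int\zeta_j e^{F^\tau}\dd=0$ for $j=1,\dots,n+1$. Existence comes from a degree/Brouwer fixed-point argument on the ball $\Ball$ of parameters of the automorphisms $\varphi_a$. As $a\to\partial\Ball$, the pushed measure concentrates at a boundary point, so the center-of-mass map extends continuously to the identity on the sphere, and hence it has a zero.

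\emph{Step 3: constrained inequality.} For $G=F^\tau$ with zero moments, we prove the stronger inequality with a smaller coefficient $c_n<1/(2(n+1)!)$ (the paper's main theorem). Here we use the spectral gap: on degree-one modes the eigenvalue is $\lambda_1=(n+1)!$, and the next eigenvalues are larger. We decompose $G$ into low and high bispherical harmonics. The zero-moment condition controls the degree-one part through a second-order expansion, which gives $c_n\E(G)+\mean G-\log Z(G)\ge0$. Since $c_n\le 1/(2(n+1)!)$, the sharp inequality follows for $G$ and hence, by Step 1, for $F$. Sharpness of the coefficient comes from testing $F=\varepsilon Y$ with $Y$ a degree-one mode. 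To second order in $\varepsilon$, the functional is
\[
\varepsilon^2\bigl(\lambda_1\|Y\|^2/(2(n+1)!)-\|Y\|^2/2\bigr)=0,
\]
so any smaller coefficient fails.

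\emph{Step 4: equality.} If equality holds, then the centered $G$ attains equality in the strictly improved inequality. That forces $\E(G)=0$, so $G$ is constant and $F=c+\log J_{\tau^{-1}}$. Conversely, such functions give equality by Step 1.

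The main obstacle is Step 3, which is the constrained inequality itself. It requires existence of extremals for a subcritical problem, an Euler--Lagrange analysis that removes the moment multiplier, and a rigidity argument showing the extremals are constant. The Hardy space identity is the delicate part of that argument. I would try the spectral-gap plus Hardy-space computation as the paper outlines.
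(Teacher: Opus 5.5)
The paper does not prove Theorem~C; it quotes it from \cite{BFM} as \eqref{eq:BFM}. Your Steps~2 and~4 are correct and coincide with the argument of Section~\ref{sec:global-stability}. Once the centered inequality is known, $\Jone(F)=\Jone(G)=\Jhalf(G)+\E(G)/(4(n+1)!)$ gives the inequality, and at equality it forces $\E(G)=0$. Your degree-one test $F=\varepsilon Y$ also correctly shows that the coefficient cannot be lowered.

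\textbf{The gap is circularity in Step~3.} You propose to obtain the centered inequality from the paper's proof of Theorem~\ref{thm:main}, but that proof uses Theorem~C as an input:
\begin{enumerate}
\item \eqref{eq:exponential-bound} is derived from \eqref{eq:BFM}, and it yields the bounds \eqref{eq:form-Lp}--\eqref{eq:exp-Taylor-error}. Lemma~\ref{lem:correction} rests on these bounds, and with it the differentiability, the constraint corrections and the smooth approximations.
\item \eqref{eq:BFM}, applied to $pF_j$, supplies the uniform integrability and the finiteness of $Z(F)$ in Proposition~\ref{prop:existence}.
\item \eqref{eq:exponential-bound} gives $G\in L^p$ in Proposition~\ref{prop:Euler}.
\end{enumerate}
As written, the implication Theorem~\ref{thm:main}~$\Rightarrow$~Theorem~C therefore assumes what it proves.

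\textbf{How to repair it.} Each use listed above needs only a non-sharp exponential bound. Apply \eqref{eq:aux-endpoint} to spectral truncations of $G\in\PR$, use $E_\lambda(G)\le\E(G)$, and pass to the limit with Fatou's lemma. This gives $\log Z(G)\le\mean G+a_n(\lambda)\E(G)+C_\lambda$ on $\Dn$, which suffices in every place. You must then also check that the other imports from \cite{BFM} are established there without the sharp Beckner--Onofri inequality. These are the localization estimates \eqref{eq:aux-endpoint} and \eqref{eq:two-cutoff-bound}, and the covariance \eqref{eq:affine-action}. Only after this rewiring is your plan a genuinely different proof of Theorem~C (a Chang--Yang-type constrained inequality plus centering), rather than a rearrangement of the paper's logic.

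Your one-line sketch of Step~3 cannot substitute for this. The zero-moment condition does not remove or control the degree-one component of $G$; the paper points this out after Corollary~\ref{cor:critical-rigidity}. A second-order expansion at constants, as in Remark~\ref{rem:hessian}, is only local. The global bound needs the full chain: Lemma~\ref{lem:subcritical}, Proposition~\ref{prop:existence}, Proposition~\ref{prop:eliminate}, Theorem~\ref{thm:moment-estimate}, and then the limit $\alpha\downarrow1/2$.

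\textbf{A smaller correction in Step~1.} $\A\log J_\tau$ is pluriharmonic and $J_\tau-1$ is not, so the identity you wrote is false. It must read
\[
\frac1{(n+1)!}\A\log J_\tau=\PP J_\tau-1 .
\]
You can check this directly:
\begin{enumerate}
\item $\Pi_kJ_\tau=d_k(\zeta\cdot\bar a)^k$, by the Poisson--Szeg\H{o} reproducing property;
\item the degree-$k$ holomorphic part of $\log J_\tau$ is $(n+1)(\zeta\cdot\bar a)^k/k$;
\item $(n+1)\lambda_k/(k\,(n+1)!)=d_k$.
\end{enumerate}
The intertwining relation likewise holds only after projecting onto $\PR$.

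With this correction, the cross term in your expansion equals $\mean F-\mean{F\circ\tau}$, because it is paired against the pluriharmonic function $F\circ\tau$. You still need two further facts. The first is $\E(F\circ\tau)=\E(F)$, taken from \cite{BFM}. The second is $\Jone(\log J_\tau)=0$, which follows from the same series, since $\frac1{2(n+1)!}\E(\log J_\tau)=-(n+1)\log(1-|a|^2)=-\mean{\log J_\tau}$.
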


The noncompact family of equality cases in Theorem~C motivates the study of sharp Moser-Onofri inequality under zero-moment condition
\begin{equation}\label{eq:center-intro}
\int_{\Sph}\zeta e^F\dd=0\quad\hbox{in }\C^{n+1}.
\end{equation}
We call a function satisfying this condition centered. Such moment conditions on the conformal density occur in the work of Aubin and Chang--Yang \cite{Aubin,CY87,CY95}. On Euclidean spheres, this improved inequality was called Chang-Yang conjecture. It was solved by Gui and Moradifam \cite{GM} for $n=2$ and Gui, Li, Wei, and Ye \cite{GLWY} for $n>2$. The latter uses an integral representation and the rigidity of stable critical points.

The main purpose of this paper is to solve the best constant problem for Moser-Onofri inequality under zero-moment condition on the CR sphere.
Define
\[
\Jf(F)=\frac{\alpha}{2(n+1)!}\E(F)+\mean F-\log Z(F),\qquad \alpha>0,
\]
and let $\Lzero$ be the set of functions in $\Dn$ satisfying \eqref{eq:center-intro}. Both $\Jf$ and the constraint are invariant under the addition of a constant. We derive that

\begin{theorem}\label{thm:main}
Let $n\geq1$. For every $F\in\Lzero$,
\[
\Jhalf(F)=\frac{1}{4(n+1)!}\E(F)+\mean F-\log Z(F)\geq0.
\]
The coefficient $1/(4(n+1)!)$ is sharp. Equality holds if and only if $F$ is constant.
\end{theorem}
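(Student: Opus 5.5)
The abstract outlines the argument, and I will follow it. For $\tfrac12<\alpha\le1$, or more generally for $\alpha$ slightly above $\tfrac12$, I will show that $\inf_{\Lzero}\Jf$ is attained and that every minimizer is constant. Letting $\alpha\downarrow\tfrac12$ then gives $\Jhalf\ge0$ on $\Lzero$.

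\textbf{Existence for $\alpha>\tfrac12$.} Split $F=F_{\le N}+F_{>N}$, where $F_{\le N}$ collects the bispherical harmonics of low degree and $F_{>N}$ the rest. For the high part I would use the CR localization (Aubin-type) estimate: if $e^F$ has zero first moment, then
\[
\log Z(F)-\mean F\le \frac{1/2+\varepsilon}{2(n+1)!}\E(F)+C_\varepsilon .
\]
This should come from Theorem~C applied to cut-off pieces $g_jF$ with a partition of unity subordinate to a cover of $\Sph$. Centering guarantees that $e^F$ spreads its mass over at least two well-separated caps, so the better of two pieces controls the whole. The low-degree part is finite-dimensional and is absorbed into $C_\varepsilon$.

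With this estimate, $\Jf$ is coercive on $\Lzero$ modulo constants once $\alpha>\tfrac12+\varepsilon$. The energy $\E$ is weakly lower semicontinuous. Moreover $F\mapsto e^F$ is compact from $\Dn$ into $L^1$, by Theorem~C applied with a multiple of $F$ (a Moser-type exponential integrability bound). Hence the constraint \eqref{eq:center-intro} passes to weak limits, and a minimizer exists.

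\textbf{Euler--Lagrange equation and removal of the multiplier.} A minimizer $F$ satisfies
\[
\frac{\alpha}{(n+1)!}\A F+1-\frac{e^F}{Z}=\Rea\bigl(\overline{\mu}\cdot\zeta\bigr)\frac{e^F}{Z}
\]
for some $\mu\in\C^{n+1}$, understood after projecting onto $\PR$. Smoothness follows by bootstrapping, using $e^F\in L^p$ for all $p$.

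To show $\mu=0$, test the equation against the infinitesimal conformal action. For $a\in\C^{n+1}$, let $X_a$ be the vector field generating the automorphisms $\tau_{sa}$, so that $\frac{d}{ds}\big|_{s=0}(\log J_{\tau_{sa}})=c_n\Rea(\overline a\cdot\zeta)$. Theorem~C is invariant under this action, which gives a Kazdan--Warner/Pohozaev-type identity: pairing the equation with $X_aF+c_n\Rea(\overline a\cdot\zeta)$ makes all terms vanish except $\Rea\bigl(\overline\mu\cdot\int\zeta\,X_a(\cdot)\bigr)$-type quantities. The coefficient matrix of these remaining terms is positive definite, and concluding $\mu=0$ from that requires a careful computation.

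\textbf{Rigidity (main obstacle).} With $\mu=0$, $F$ solves $\frac{\alpha}{(n+1)!}\A F+1=e^F/Z$ on $\Lzero$ with $\alpha<1$. I must show $F$ is constant; this is the CR analogue of the Gui--Wei/Gui--Li--Wei--Ye rigidity step.

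My first attempt is a second-variation argument. Test the stability inequality of the minimizer in the directions $\Rea(\overline a\cdot\zeta)$ and $\Ima(\overline a\cdot\zeta)$, which are admissible modulo the constraint. Expanding $F$ as $\sum_k(h_k+\overline h_k)$ with $h_k$ holomorphic of degree $k$, the holomorphic parts lie in $H^2(\Ball)$. Using the summation identity \eqref{eq:shift-sum} relating $\sum_j\|\zeta_jh\|^2$ to degree shifts, I would compute
\[
\sum_j \int \abs{\zeta_j}^2 \ldots
\]
type quantities explicitly. The aim is the inequality
\[
\sum_k\Bigl(\frac{\alpha\lambda_k}{(n+1)!}\text{-weighted shift difference}\Bigr)\|h_k\|^2\le0 .
\]
Each coefficient should be strictly positive for $\alpha>\tfrac12$, because $\lambda_{k+1}-\lambda_k$ grows and $\lambda_1=(n+1)!$. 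That forces $h_k=0$ for all $k\ge1$.

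\textbf{Sharpness of $1/(4(n+1)!)$.} Take $F=\varepsilon\Rea(\zeta_1^2)$, which is centered to first order, or a two-bubble configuration. A second-order expansion gives $\Jf(F)\approx\varepsilon^2(\alpha\lambda_2/(n+1)!-\ldots)$, which is negative for $\alpha<\tfrac12$. One correction is needed to restore exact centering. Equality in $\Jhalf$ is then handled by passing to the limit and reusing the rigidity computation at $\alpha=\tfrac12$, where the strict positivity of the coefficients for $k\ge1$ must be rechecked.
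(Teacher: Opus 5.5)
Your outline (subcritical existence, Euler--Lagrange equation, conformal removal of the multiplier, rigidity, then $\alpha\downarrow\tfrac12$) follows the paper. However, the two steps that actually produce the constant $\tfrac12$ do not work as proposed.

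\textbf{Rigidity.} The second-variation scheme has no mechanism that singles out $\tfrac12$. The degree-one directions $x_v$ are not tangent to the constraint, since $\int x\,x_ve^F\dd=Cv\neq0$, and you give no computation of the corrected second variation. Moreover, the spectral data of $\A$ alone (growth of $\lambda_k$, $\lambda_1=(n+1)!$) put the threshold at $1/(n+2)$, not $\tfrac12$ (Remark~\ref{rem:hessian}). You also misdescribe \eqref{eq:shift-sum}: the relation $\sum_j\norm{\zeta_jh}_2^2=\norm h_2^2$ is trivial, and the actual identity is $\sum_{\beta\neq0}c_\beta\norm{S^{*\beta}h}_2^2=\Rcal(h)/(n+1)$.

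The missing idea is to apply this identity not to the components of $F$ but to $h=e^f$, where $F=f+\bar f$, so that $e^F=\abs h^2$ with $h$ holomorphic. The paper then needs only the first variation. The degree-$k$ holomorphic part of the multiplier-free equation is $\Pi_ke^F=\frac{\alpha\lambda_k}{(n+1)!}f_k$. Pairing with $e^F$, weighting by $k$, and using $k(n+1)!d_k/\lambda_k=n+1$ gives the exact identity $\Rcal(h)=\frac{n+1}{\alpha}\Scal(h)$.

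Independently, for every centered $h$ with $\norm h_2=1$, the paper proves $\Rcal(h)\geq2(n+1)\Scal(h)$. The ingredients are the splitting $S^{*\beta}h=\overline{m_\beta}h+r_\beta$, the relation $m_{\beta+\varepsilon_j}=\ip{r_\beta}{S_jh}$ (this is where centering enters), Cauchy--Schwarz, and $\sum_j\norm{S_jh}_2^2=1$. Together they give $\Scal(h)\leq\Tcal(h)$, which is equivalent to the inequality. That factor $2$ is the source of $\alpha=\tfrac12$. At $\alpha=\tfrac12$ there is no strict positivity to recheck. The equality case of Theorem~\ref{thm:main} comes from excluding equality in Cauchy--Schwarz, using that $z_1h$ and $z_2h$ are linearly independent.

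\textbf{Sharpness.} The function $t\Rea(\zeta_1^2)$ is already exactly centered, being even, and it gives
\[
\Jf\bigl(t\Rea(\zeta_1^2)\bigr)=\frac{t^2}{2}\bigl(\alpha(n+2)-1\bigr)\norm{\Rea(\zeta_1^2)}_2^2+O(t^3),
\]
since $\lambda_2/(n+1)!=n+2$. This is negative only for $\alpha<1/(n+2)\leq\tfrac13$. For $1/(n+2)<\alpha<\tfrac12$ the constrained Hessian at constants is coercive, so no small perturbation of a constant detects the threshold. Sharpness is a global concentration effect.

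The paper takes $F_r=-2(n+1)\log\abs{1-r^2\zeta_1^2}$, which is even and hence in $\Lzero$, and whose densities concentrate at $\pm(1,0,\ldots,0)$. With $L_r=-\log(1-r^4)\to\infty$ one gets $\E(F_r)/(2(n+1)!)=2(n+1)L_r$ and $\log Z(F_r)=(n+1)L_r-n\log2+o(1)$. Hence $\Jf(F_r)=(2\alpha-1)(n+1)L_r+n\log2+o(1)\to-\infty$ for $\alpha<\tfrac12$.

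\textbf{Smaller points.} Theorem~C cannot be applied to the cut-offs $g_jF$, which are not pluriharmonic. The paper instead uses the full-space and cutoff estimates of \cite{BFM}, together with a high/low degree split to absorb the lower-order term $\ip V{\Ls^nV}$. Also, $\Jf$ is not conformally invariant for $\alpha<1$, so the conformal test leaves the term $(1-\alpha)a\cdot v$, where $a$ is the degree-one coefficient of $F$. This must be closed with the relation $\alpha a=QC\ell$, which gives the matrix $I-C+\cstr^TC\cstr+\frac{Q(1-\alpha)}{\alpha}C$; it is positive definite because $\alpha\leq1$.
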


To prove Theorem~\ref{thm:main}, we first establish the existence of extremals of the subcritical Moser--Onofri inequality under zero-moment constraints for $\alpha>1/2$. We then compare an identity from their Euler--Lagrange equation with the estimate in Theorem~\ref{thm:moment-estimate} and show that these extremals are constant for $1/2<\alpha\leq1$, which gives $\Jf(F)\geq0$ for every $F\in\Lzero$ for $1/2<\alpha\leq1$. Passing to the limit $\alpha\rightarrow \frac{1}{2}$, we obtain the sharp Moser--Onofri inequality under zero-moment constraints stated in Theorem~\ref{thm:main}.

In the process of proving that the extremals of the subcritical Moser--Onofri inequality under zero-moment constraints are constant, we in fact prove that any critical point of the functional $\Jf$ in $\Dn$ under zero-moment constraint is a constant for $1/2\leq\alpha\leq1$. We state this conclusion in the following corollary.
\begin{corollary}\label{cor:critical-rigidity}
Let $n\geq1$ and $1/2\leq\alpha\leq1$. Any critical point of $\Jf$ under the zero-moment constraint is a constant.
\end{corollary}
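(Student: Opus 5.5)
The plan is to derive an identity from the Euler--Lagrange equation of a constrained critical point and compare it with the moment estimate of Theorem~\ref{thm:moment-estimate}. Let $F\in\Lzero$ be a critical point of $\Jf$ under \eqref{eq:center-intro}, normalized by adding a constant so that $Z(F)=1$. We have $\Jf(F+c)=\Jf(F)$, the constraint is invariant under constants, and the constraint map $F\mapsto\int\zeta e^F\dd$ is $C^1$ on $\Dn$ (by the Moser--Trudinger integrability of $e^{pF}$ for all $p$). Hence Lagrange multipliers give $\mu\in\C^{n+1}$ with
\[
\frac{\alpha}{(n+1)!}\A F+1-e^F=\Rea\bigl(\bar\mu\cdot\zeta\bigr)e^F
\]
in the weak sense on $\PR$; projection onto $\PR$ must be kept track of here, since $\A$ lives on $\PR$. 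By the regularity theory already used for the extremals (bootstrap using $e^F\in L^p$ for all $p$), $F$ is smooth.

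The first step is to kill the multiplier $\mu$. For each $a\in\C^{n+1}$, test the equation against the infinitesimal CR conformal generator, i.e.\ differentiate the invariance $\Jf^{(1)}$-structure along the one-parameter group $\tau_s$ of automorphisms moving toward $a$. For $\alpha=1$, $\Jone$ is conformally invariant by Theorem~C, so the derivative of $\Jone(F^{\tau_s})$ at $s=0$ vanishes. For general $\alpha$, this invariance gives a Kazdan--Warner/Pohozaev type identity: pairing the equation with $X_aF+\Rea(\bar a\cdot\zeta)(n+1)$-type terms makes the left side vanish up to the $(1-\alpha)$ part. Combined with the constraint \eqref{eq:center-intro}, this should yield $\int\Rea(\bar\mu\cdot\zeta)\Rea(\bar a\cdot\zeta)e^F\dd\cdot(\text{positive})=0$ for all $a$. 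Taking $a=\mu$ forces $\mu=0$, because $e^F>0$.

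The second step treats the remaining equation $\frac{\alpha}{(n+1)!}\A F=e^F-1$ with $\int\zeta e^F\dd=0$. Test it against the first-order modes $\zeta_j$ and $\bar\zeta_j$, and against $X_j F$ for the infinitesimal conformal vector fields. Summing over $j$ and using the Hardy space identity \eqref{eq:shift-sum} (which relates $\sum_j\|\zeta_j g\|^2$ and shifted eigenvalues $\lambda_{k+1}$ versus $\lambda_k$ on holomorphic/antiholomorphic parts) produces an identity of the form
\[
\Bigl(\tfrac{1}{\alpha}-\text{const}\Bigr)\,\E(F)=\text{(moment quantity)},
\]
which bounds $\E(F)$ in terms of $\int|\zeta|^2$-weighted second moments of $e^F$. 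Theorem~\ref{thm:moment-estimate} supplies the opposite inequality for centered densities, with the threshold exactly at $\alpha=1/2$. For $1/2\le\alpha\le1$ the two estimates are compatible only if $\E(F)=0$. Then $F$ is constant on $\PR$, and hence constant.

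The main obstacle is the second step: getting the precise constants in the Pohozaev-type identity, so that the shift-sum identity on $H^2(\Ball)$ reproduces exactly the factor $2\alpha-1$ (and a factor that stays nonnegative up to $\alpha=1$). I would first verify this on a single pure mode $F=\Rea(\zeta_1^k)$, where both sides are computable from $\lambda_k=\Gamma(k+n+1)/\Gamma(k)$. I would then extend to general $F$ by orthogonality of the bispherical harmonic decomposition, treating the mixed terms produced by multiplication by $\zeta_j$ with \eqref{eq:shift-sum}.
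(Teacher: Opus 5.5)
Your first ingredients (Lagrange multiplier, bootstrap to smoothness) and the plan of confronting an Euler--Lagrange identity with Theorem~\ref{thm:moment-estimate} match the paper. The decisive second step, however, has a genuine gap: the identity you aim for has the wrong form, and your test functions cannot produce any usable identity.

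Theorem~\ref{thm:moment-estimate} does not bound moments of $e^F$ by $\E(F)$. It compares $\Scal(h)$ with $\Rcal(h)=\sum_{m\geq1}m\norm{h_m}_2^2$ for a \emph{holomorphic} $h$ with $\abs h^2=e^F$. Such an $h$ exists only because $F$ is pluriharmonic: write $F=f+\bar f$ with $Z(F)=1$ and take $h=e^f$. Then $\Rcal(h)$ is a nonlinear functional of $F$ with no expression through $\E(F)$. So an identity of the shape $(1/\alpha-\mathrm{const})\E(F)=(\text{moments})$ could not be played against it.

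Your test functions also see only finitely many quantities. Testing with $x_i$ gives just the degree-one relation $\alpha a=QC\ell$. Testing with the conformal variations $Y_v=X_vF-Qx_v$ gives just the Kazdan--Warner relations, a family indexed by $v\in\R^Q$ that never involves $s_k$ for $k\geq2$. Also, \eqref{eq:shift-sum} does not compare $\lambda_{k+1}$ with $\lambda_k$. It states $\sum_{\beta\ne0}c_\beta\norm{S^{*\beta}h}_2^2=\Rcal(h)/(n+1)$, and it is used only inside the proof of Theorem~\ref{thm:moment-estimate}.

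The missing idea is the holomorphic projection and the radial test:
\begin{itemize}
\item Project the multiplier-free equation onto $\mathcal H_{k,0}$ to get $\Pi_ke^F=\frac{\alpha\lambda_k}{(n+1)!}f_k$, and pair with $Rf=\sum_kkf_k$.
\item The chain rule $Rh=hRf$ gives $\int e^FRf\dd=\ip h{Rh}=\Rcal(h)$.
\item The reproducing kernel gives $\int e^Ff_k\dd=\frac{(n+1)!}{\alpha\lambda_k}\norm{\Pi_ke^F}_2^2=\frac{(n+1)!\,d_k}{\alpha\lambda_k}s_k$.
\item Finally, $k(n+1)!\,d_k/\lambda_k=n+1$ exactly.
\end{itemize}
With $s_1=0$ this yields $\Rcal(h)=\frac{n+1}{\alpha}\Scal(h)$, which is Proposition~\ref{prop:Euler-moment}. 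For $\alpha>1/2$, Theorem~\ref{thm:moment-estimate} then forces $\Scal(h)=\Rcal(h)=0$. At $\alpha=1/2$ there is no incompatibility at all. There you must use the equality case of Theorem~\ref{thm:moment-estimate}, namely the linear independence of $z_1h$ and $z_2h$, which is where $n\geq1$ enters.

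Your Step 1 points the right way but is incomplete, and it is actually where $\alpha\leq1$ is used. Step 2 works for every $\alpha\geq1/2$ once the multiplier is gone, so nothing there needs to stay nonnegative up to $\alpha=1$. Write $\Jf=\alpha\Jone+(1-\alpha)(\mean F-\log Z(F))$. Conformal invariance then gives $D\Jf(F)[Y_v]=(1-\alpha)\int Y_v\dd=(1-\alpha)\,a\cdot v$, where $x\cdot a$ is the degree-one part of $F$. On the other hand, the full multiplier relation and $Y_ve^F=\Div(e^FX_v)$ give $D\Jf(F)[Y_v]=-\ell^THv$, with $H=I-C+\cstr^TC\cstr$ and $C=\int xx^Te^F\dd$.

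The $(1-\alpha)$ term neither vanishes nor has the form you predict. To close the argument you need the degree-one relation $\alpha a=QC\ell$, which you place in Step 2 where it is no longer useful. With it, the identity becomes $\bigl(H+\frac{Q(1-\alpha)}{\alpha}C\bigr)\ell=0$. Positive definiteness of $H$ and $C$, together with $1-\alpha\geq0$, then gives $\ell=0$.
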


The linearization of the moment constraint at a constant removes the first spherical harmonics from the tangent space. After fixing the additive constant, the first remaining pure spherical harmonics have degree two. Consequently, the constrained Hessian is nonnegative exactly when $\alpha\geq1/(n+2)$; see Remark~\ref{rem:hessian}. However, the zero-moment condition does not imply that the degree-one component of an admissible $F$ vanishes.

As an application of Theorem~\ref{thm:main}, we also deduce the following sharp stability inequality for the Moser--Onofri inequality on the CR sphere.

\begin{theorem}\label{thm:stability}
Let $n\geq1$, and let
\[
\mathcal M=\{c+\log J_\tau:c\in\R,\ \tau\in\Aut(\Sph)\}
\]
be the set of extremals of sharp Moser-Onofri inequality in Theorem~C. For every real function $F\in\Dn$, there holds
\begin{equation}\label{eq:stability}
\frac{1}{2(n+1)!}\E(F)+\mean F-\log Z(F)
\geq\frac{1}{4(n+1)!}\inf_{h\in\mathcal M}\E(F-h).
\end{equation}
Equality holds if and only if $F\in\mathcal M$.
\end{theorem}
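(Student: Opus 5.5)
The plan is to deduce the stability inequality from Theorem~\ref{thm:main} by conformal centering, following Carlen's strategy on Euclidean spheres. Given a real $F\in\Dn$, first find $\tau\in\Aut(\Sph)$ such that the transformed function $F^\tau=F\circ\tau+\log J_\tau$ is centered, i.e.
\[
\int_{\Sph}\zeta e^{F^\tau}\dd=0 .
\]
This is the standard center-of-mass argument. The measure $\mu=e^F\dd/Z(F)$ is absolutely continuous, hence has no atoms. Consider the map sending $\tau$, ranging over the complex dilations $\varphi_a$ of the unit ball parametrized by $a\in\Ball$, to the barycenter $\int\zeta\,d(\varphi_a)_*\mu$. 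It extends continuously to $\overline{\Ball}$ and restricts to the identity (up to sign/homotopy) on the boundary, because as $a\to\partial\Ball$ the pushforward concentrates at a boundary point. A degree (Brouwer) argument then gives a zero. One must check that $e^{F^\tau}\dd$ is indeed the pullback/pushforward of $e^F\dd$ under $\tau$; this follows from the change of variables $\int g\circ\tau\,J_\tau\dd=\int g\dd$, which I take from Subsection~\ref{sec:automorphisms}.

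Next, use invariance. By Theorem~C, the functional $\Jone(F)=\frac{1}{2(n+1)!}\E(F)+\mean F-\log Z(F)$ is invariant: $\Jone(F^\tau)=\Jone(F)$. The set $\mathcal M$ is invariant under the action $h\mapsto h^\tau$, since compositions of automorphisms and the Jacobian cocycle give $(c+\log J_\sigma)^\tau=c+\log J_{\sigma\circ\tau}$. I also need $\E(F^\tau-h^\tau)=\E(F-h)$. Since $F^\tau-h^\tau=(F-h)\circ\tau$, this amounts to the conformal invariance of $\E$ under pure composition. That invariance should follow from the fact that $\E(F^\tau)-\E(F)$ equals a linear expression in $F$ plus a constant (the standard cocycle identity behind Theorem~C's invariance), with the cross terms cancelling in the difference. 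The most efficient route is the identity $\E(G\circ\tau)=\E(G)$ for $G\in\Dn$, which is the intertwining property of $\A$ on $\PR$. The consequence is that $\inf_{h\in\mathcal M}\E(F-h)$ is invariant, so we may assume $F$ is centered, i.e. $F\in\Lzero$.

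For centered $F$, take $h=\mean F\in\mathcal M$ (a constant; $\tau=\mathrm{id}$). Since $\A$ kills constants, $\E(F-h)=\E(F)$. Then
\[
\Jone(F)-\frac{1}{4(n+1)!}\E(F)=\Jhalf(F)\ge0
\]
by Theorem~\ref{thm:main}. This gives
\[
\Jone(F)\ge\frac{1}{4(n+1)!}\E(F)\ge\frac{1}{4(n+1)!}\inf_{h\in\mathcal M}\E(F-h).
\]
For the equality case, suppose equality holds. Then $\Jhalf(F^\tau)=0$, so $F^\tau$ is constant by Theorem~\ref{thm:main}, whence $F=c+\log J_{\tau^{-1}}\in\mathcal M$ by the cocycle identity. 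Conversely, if $F\in\mathcal M$, both sides vanish by Theorem~C.

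The main obstacle is the centering step, i.e. the topological argument. Its key point is verifying the boundary behavior of the barycenter map as $a\to\partial\Ball$. To handle it, I would write $\varphi_a$ explicitly and show $\varphi_a^{-1}(\zeta)\to -a/|a|$-type concentration for a.e. $\zeta$, then apply dominated convergence, using that $\mu$ is atomless. A secondary point is confirming that $\E$ is invariant under composition on $\Dn$, and not merely that $\Jone$ is invariant. If that proves delicate, I would argue via $\Jone$-invariance together with the polarization of the quadratic form.
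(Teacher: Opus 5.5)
Your proposal is correct and follows essentially the paper's route: conformally center $F$, apply Theorem~\ref{thm:main} to $F_\tau$, and transfer back using invariance of $\Jone$ and composition invariance of $\E$ (which the paper derives from $\Jone$-invariance by the polarization-in-$t$ argument you give as a fallback), with the same equality argument. The differences are minor: you prove the centering step by a Brouwer argument where the paper cites the CR center-of-mass lemma, and you reduce to centered $F$ with $h$ a constant instead of inserting $h=\log J_{\tau^{-1}}$ directly.
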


Theorem~\ref{thm:stability} applies to all functions on the whole energy space $\Dn$, without a zero-moment or exponential-mass constraint on $F$. The distance from $F$ to extremal manifold
$\mathcal M$ is measured by the quadratic form of the conformally
invariant operator $\A$. We also note that
in classical Euclidean spheres, Chen, Lu, and Tang \cite{ChenLuTang} proved the local stability
for the log-Sobolev and Moser-Onofri inequalities.
The nonexistence result for global Moser--Onofri stability
concerns the $L^2$ distance of $u=e^{F/2}$ to the conformal
extremal family. In stability inequality \eqref{eq:stability}, the distance is measured in the energy
norm of $F$ to the extremal family. Thus the two results concern different distances and
different classes of admissible functions.

\subsection{Main steps of the proof}
We first establish the existence of minimizers for $\alpha>1/2$.
Branson, Fontana, and Morpurgo proved an Aubin-type inequality under the
zero-moment condition in \cite{BFM}. Their estimate
contains the additional lower-order term
$C_1(\beta,n)\norm{\Ls^{(Q-1)/4}F}_2^2$; see
\eqref{eq:BFM-Aubin-remainder}. Since this term depends on $F$, it
cannot simply be included in an additive constant independent of $F$.
For the direct-method proof used here, we remove this lower-order term.
More precisely, Lemma~\ref{lem:subcritical} proves that, for every
$\beta>1/2$, there is a constant $C_{\beta,n}\geq0$, independent of $F$,
such that
\[
\log Z(F)-\mean F
\leq\frac{\beta}{2(n+1)!}\E(F)+C_{\beta,n},
\qquad F\in\Lzero.
\]
The additive constant is allowed at this stage. The sharp lower bound
zero for $\mathcal J_{\beta,n}$ is obtained after the existence and
classification of extremals have been proved.

To remove the lower-order term, we use the full-space exponential estimate
and the cutoff estimates in \cite{BFM}.
After subtracting $\mean F$, we write $F=U+V$, where $U$ is the
orthogonal projection onto the real pluriharmonic modes of degrees
$1\leq k\leq K$. Here $k$ is the degree index, while the integer
$K$ will be chosen independently of $F$. The zero-moment condition
for the full density $e^F/Z(F)$
provides two separated spherical caps, each carrying a fixed positive
amount of mass. On the set $A_i$ where the corresponding cutoff
$\chi_i$ equals one,
\[
b_iZ(F)\leq\int_{A_i}e^{U+V}\dd
\leq e^{\norm U_\infty}\int_{\Sph}e^{\chi_iV}\dd.
\]
We therefore apply the exponential and cutoff estimates to $V$ without
assuming that $e^V$ satisfies the zero-moment condition. On the modes of
degree larger than $K$, the lower-order remainder is bounded by a
quantity tending to zero as $K\to\infty$  times $\E(V)$. For fixed $K$,
$\norm U_\infty\leq B_K\E(U)^{1/2}$. Absorbing the high-degree
remainder and completing the square in $\E(U)^{1/2}$ gives the displayed
subcritical estimate with no lower-order term.

For a fixed $\alpha>1/2$, choose $1/2<\beta<\alpha$. The difference
between the two parameters then gives
\[
\Jf(F)=\mathcal J_{\beta,n}(F)
+\frac{\alpha-\beta}{2(n+1)!}\E(F)
\geq\frac{\alpha-\beta}{2(n+1)!}\E(F)-C_{\beta,n}.
\]
This gives a uniform energy bound for every minimizing sequence. Compactness and
uniform integrability of the exponential densities then give a minimizer
in Proposition~\ref{prop:existence}.
\vskip0.1cm

We next derive the Euler--Lagrange equation satisfied by these extremals.
We establish differentiability in the energy space and apply
the implicit function theorem to construct local correction maps.
These maps provide curves that preserve the mass and zero-moment
constraints, as well as smooth approximations
satisfying the same constraints.
The Euler--Lagrange equation initially contains a vector
Lagrange multiplier.
Using the spectral decomposition of $\A$, we obtain a gain
of $n+1$ derivatives in the ordinary Sobolev spaces on $S^{2n+1}$.
Since $n+1>(2n+1)/2$, Sobolev composition estimates and iteration
show that the extremals are smooth.
We then test the Euler--Lagrange equation with variations
induced by CR automorphisms.
Differentiating the moment constraint along these variations
gives a matrix identity which, together with the degree-one
equation, forces the multiplier to vanish for $0<\alpha\leq1$.
Since these variations need not preserve the moment constraint,
the multiplier term must be retained in this calculation.
\vskip0.1cm

To classify the extremals, we normalize $Z(F)=1$ and write
$F=f+\bar f$ and $h=e^f$.
The Euler--Lagrange equation gives an identity relating the
holomorphic moments of $\abs h^2$ to the radial energy of $h$.
Using the summation identity \eqref{eq:shift-sum} on the Hardy space $H^2(\Ball)$,
together with the zero-moment condition and the Cauchy--Schwarz inequality, we obtain
\[
\sum_{\abs\beta\geq2}\frac{(\abs\beta)!}{\beta!}
\left|\int_{\Sph}\zeta^\beta\abs h^2\dd\right|^2
\leq\frac1{2(n+1)}\sum_{m\geq1}m\norm{h_m}_2^2,
\]
where $\beta$ is a multi-index and $h_m$ is the homogeneous component
of $h$ of degree $m$; see \eqref{eq:moment-estimate}.
The equality case follows from the linear independence
of $z_1h$ and $z_2h$.
Combining the moment identity with this inequality shows that every
extremal of the subcritical Moser--Onofri inequality under zero-moment constraints
is constant for $1/2<\alpha\leq1$.
Since the functional vanishes at constants, its minimum is zero.
The equality case of the moment estimate also shows that any minimizer
at $\alpha=1/2$ must be constant.
\vskip 0.1cm

Finally, for each fixed admissible function, we obtain the
inequality at $\alpha=1/2$ by letting $\alpha\downarrow1/2$.
Any function attaining equality at the endpoint is a constrained
minimizer, so Proposition~\ref{prop:extremal-classification} gives its classification.
To prove sharpness, we use the family $F_r$ in \eqref{eq:two-pole}.
Since $F_r(-\zeta)=F_r(\zeta)$, its first moments vanish for every
$0<r<1$. The normalized densities $e^{F_r}/Z(F_r)$ concentrate at
two antipodal points as $r\uparrow1$, and the resulting asymptotics
show that the coefficient in Theorem~\ref{thm:main} cannot be decreased.
\medskip

To deduce Theorem~\ref{thm:stability}, we first choose a CR automorphism
$\tau$ such that $F_\tau=F\circ\tau+\log J_\tau$ has zero first moment.
Theorem~\ref{thm:main} gives
$\Jone(F_\tau)\geq\E(F_\tau)/(4(n+1)!)$.
The functional is invariant under this affine action, whereas the energy
is invariant under composition. The inverse-Jacobian identity therefore gives
$\E(F_\tau)=\E(F-\log J_{\tau^{-1}})$.
Since $\log J_{\tau^{-1}}\in\mathcal M$, taking the infimum over
$\mathcal M$ proves the asserted stability inequality.

\subsection{Organization}
This paper is organized as follows.
Section~2 introduces the CR automorphism group and its Jacobians,
the spectral and Hardy-space preliminaries, and the localization estimates.
In Section~3, we establish differentiability and smooth approximation
in the energy space, prove the subcritical inequality and the existence of minimizers,
and derive the Euler--Lagrange equation satisfied by the extremals.
In Section~4, we combine an identity from the Euler--Lagrange equation
with an estimate for the moments of $e^F$ to classify the extremals.
This proof also establishes Corollary~\ref{cor:critical-rigidity}.
Finally, Sections~5 and~6 give the proofs of the two main theorems respectively.

\section{Preliminaries}
In this section, we introduce the function spaces and operator normalizations, define the CR automorphism group and its boundary Jacobians, and record the conformal action. We then recall the exponential and cutoff estimates needed for the existence of extremals and the Hardy-space formulas needed for their classification.

\subsection{Spherical harmonics and the energy space}\label{sec:spectrum}
We use the complex $L^2$ inner product
\[
\ip uv=\int_{\Sph}\overline u\,v\dd,
\]
which is conjugate-linear in its first argument. For real functions it agrees with the real bilinear pairing. The standard orthogonal decomposition is
\begin{equation}\label{eq:bispherical}
L^2(\Sph;\C)=\widehat\bigoplus_{j,k\geq0}\mathcal H_{j,k},
\end{equation}
where $\mathcal H_{j,k}$ consists of the restrictions to the sphere of homogeneous harmonic polynomials of degree $j$ in $\zeta$ and degree $k$ in $\bar\zeta$. We use the decomposition, projection kernels, and operator normalizations of \cite{Folland,BFM,FL}. A real pluriharmonic function has the expansion
\begin{equation}\label{eq:PR}
F=\mean F+\sum_{k\geq1}(f_k+\bar f_k),
\qquad f_k\in\mathcal H_{k,0},
\end{equation}
with convergence in $L^2$. The set of such functions is denoted by $\PR$.

Let $\Pi_k$ project onto $\mathcal H_{k,0}$, and let $\Pi$ be the Hardy--Szeg\H{o} projection, including constants. For real $G\in L^2$, the real orthogonal projection onto $\PR$ is
\[
\PP G=\Pi G+\overline{\Pi G}-\mean G.
\]
The spectrum of the conditional intertwinor is
\begin{equation}\label{eq:spectrum}
\A Y_{k,0}=\lambda_kY_{k,0},\qquad
\A Y_{0,k}=\lambda_kY_{0,k},\qquad
\lambda_k=\frac{\Gamma(k+n+1)}{\Gamma(k)}=\prod_{j=0}^{n}(k+j),
\end{equation}
for $k\geq1$, and $\A1=0$; see \cite{BFM}. Thus
\begin{equation}\label{eq:form-domain}
\E(F)=2\sum_{k\geq1}\lambda_k\norm{f_k}_2^2,\qquad
\Dn=\{F\in\PR:\E(F)<\infty\},\qquad
\norm F_{\Dn}^2=\abs{\mean F}^2+\E(F).
\end{equation}
We call this real Hilbert space the energy space and the norm in \eqref{eq:form-domain} the energy norm. On the zero-mean subspace this norm is $\E(F)^{1/2}$. The notation $\ip{\A F}\phi$ denotes the pairing associated with the quadratic form when $\A F$ has only distributional meaning. Spectral truncations converge in $\Dn$, and $\lambda_1=(n+1)!$ gives
\[
\norm{F-\mean F}_2^2\leq\frac1{(n+1)!}\E(F).
\]

For the nonnegative CR sublaplacian $\Ls=\mathcal L_{\mathrm{conf}}-n^2/4$ introduced in Section~1, we recall its spectrum to relate the form domain to the standard Sobolev spaces. Its eigenvalue on $\mathcal H_{j,k}$ is
\[
 jk+\frac{n}{2}(j+k).
\]
In particular, on $\mathcal H_{k,0}\oplus\mathcal H_{0,k}$ it is
\begin{equation}\label{eq:L-eigenvalue}
\nu_k=\frac{n}{2}k.
\end{equation}
The Folland--Stein space of order $s$ and exponent two has spectral norm equivalent to $\norm{(I+\Ls)^{s/2}F}_2$. Therefore $\Dn$ is precisely the real pluriharmonic subspace of this Folland--Stein space for $s=Q/2=(n+1)$, with equivalent norms. This identifies the domain with the one in \cite{BFM}. We denote the ordinary $L^2$ Sobolev space on the sphere by $W^{s,2}(\Sph)$. On the pluriharmonic subspace, the energy norm is equivalent to the $W^{(n+1)/2,2}(\Sph)$ norm: the eigenvalue of $I-\Delta_{\Sph}$ on degree $k$ is $1+k(k+2n)$, whose $(n+1)/2$ power is comparable to $\lambda_k$. Since $(n+1)/2<(2n+1)/2$ for $n\geq1$, the Sobolev embedding at this order does not give continuity. In Section~3, the Euler--Lagrange equation will give $F\in W^{n+1,2}(\Sph)$, which allows us to establish smoothness.

\subsection{CR automorphisms and boundary Jacobians}\label{sec:automorphisms}
Let $\Ball=\{z\in\C^{n+1}:|z|<1\}$. We denote by $\Aut(\Sph)$ the group of CR automorphisms of $\Sph$. More precisely, a member $\tau$ is a smooth diffeomorphism of $\Sph$ that preserves the complex tangent spaces
\[
H_\zeta=\{v\in\C^{n+1}:v\cdot\bar\zeta=0\}
\]
and satisfies $d\tau_\zeta(iv)=i\,d\tau_\zeta(v)$ for $v\in H_\zeta$. Equivalently, its members are the boundary restrictions of biholomorphic automorphisms of $\Ball$, which extend smoothly to the closed ball; see \cite{Rudin} and \cite{BFM}. Thus $\Aut(\Sph)$ refers to the CR automorphism group, not the group of all smooth diffeomorphisms of the sphere.

For $\tau\in\Aut(\Sph)$, its positive boundary Jacobian $J_\tau$ is defined by
\begin{equation}\label{eq:boundary-Jacobian}
\tau^*(d\sigma)=J_\tau\,d\sigma,
\qquad
\int_{\Sph}\psi(\tau(\zeta))J_\tau(\zeta)\,d\sigma_\zeta
=\int_{\Sph}\psi(\eta)\,d\sigma_\eta
\end{equation}
for every integrable $\psi$. In particular,
\[
\int_{\Sph}J_\tau\dd=1,
\qquad
J_{\tau^{-1}}(\eta)=\frac{1}{J_\tau(\tau^{-1}(\eta))}.
\]
The same function $J_\tau$ is obtained with standard surface measure $d\xi$, since the source and target measures are rescaled by the same constant.

If $\tau$ is the boundary value of a ball automorphism $\Phi$ and $a=\Phi^{-1}(0)\in\Ball$, the standard Jacobian formula is
\begin{equation}\label{eq:automorphism-Jacobian}
J_\tau(\zeta)
=\left(\frac{1-|a|^2}{|1-\zeta\cdot\bar a|^2}\right)^{n+1}
=\frac{(1-|a|^2)^{n+1}}{|1-\zeta\cdot\bar a|^Q};
\end{equation}
see \cite{BFM}. Unitary automorphisms correspond to $a=0$ and have $J_\tau=1$. This formula also shows that $\log J_\tau$ is smooth and real CR-pluriharmonic: $|a|<1$ ensures that $1-z\cdot\bar a$ has a holomorphic logarithm on a neighborhood of the closed ball.

The equality family in Theorem~C is therefore
\begin{equation}\label{eq:Onofri-extremal-family}
\mathcal M=\{c+\log J_\tau:c\in\R,\ \tau\in\Aut(\Sph)\}.
\end{equation}
Recall that $Z(F)=\int_{\Sph}e^F\dd$ is the exponential integral introduced in Section~1. For $F=c+\log J_\tau$, we have
\[
Z(F)=e^c\int_{\Sph}J_\tau\dd=e^c.
\]
Hence $c=\log Z(F)$, and the equality functions with $Z(F)=1$ are precisely $\log J_\tau$.
The affine conformal action preserves $\PR$, $\Dn$, the exponential mass, and the unconstrained functional:
\begin{equation}\label{eq:affine-action}
F_\tau=F\circ\tau+\log J_\tau,\qquad
Z(F_\tau)=Z(F),\qquad \Jone(F_\tau)=\Jone(F).
\end{equation}
These are the covariance identities in \cite{BFM}. We differentiate them with respect to a CR automorphism only for smooth $F$.

We also record the corresponding identity for the quadratic form:
\begin{equation}\label{eq:energy-composition}
\E(V\circ\tau)=\E(V),\qquad V\in\Dn,\quad\tau\in\Aut(\Sph).
\end{equation}
It follows directly from \eqref{eq:affine-action}. Indeed, for every real $t$,
apply that identity to $tV$ and cancel the equal logarithmic masses to obtain
\[
\begin{split}
&\frac{1}{2(n+1)!}\E(tV\circ\tau+\log J_\tau)
+t\mean{V\circ\tau}+\mean{\log J_\tau}\\
&\hspace{35mm}=\frac{t^2}{2(n+1)!}\E(V)+t\mean V.
\end{split}
\]
Both sides are polynomials in $t$. Equality of their quadratic coefficients
is \eqref{eq:energy-composition}. This argument is valid on $\Dn$, since
\eqref{eq:affine-action} preserves that space and $Z(tV)<\infty$ for every
fixed $t$. For two functions transformed by the same affine action, it gives
\[
\E(F_\tau-h_\tau)=\E((F-h)\circ\tau)=\E(F-h).
\]
Thus the energy of a difference is preserved. The energy of a single
function under the affine action need not be preserved.

We shall use the standard CR center-of-mass lemma in its density form
\cite{FL}: for every $p\in L^1(\Sph)$ with $p\geq0$ and
$\int p\dd=1$, there is $\tau\in\Aut(\Sph)$ such that
\begin{equation}\label{eq:CR-centering-density}
\int_{\Sph}\zeta\,p(\tau(\zeta))J_\tau(\zeta)\,d\sigma_\zeta=0.
\end{equation}
This is the CR counterpart of the conformal centering used in
\cite{Carlen25}. For $F\in\Dn$, applying it to
$p=e^F/Z(F)$ gives $F_\tau\in\Lzero$ and $Z(F_\tau)=Z(F)$.
Normalizing this auxiliary density does not impose any condition on the
value of $Z(F)$.

\subsection{The known endpoint inequality}
In the preceding normalization, the theorem in \cite{BFM} is
\begin{equation}\label{eq:BFM}
\log Z(F)\leq\mean F+\frac1{2(n+1)!}\E(F),\qquad F\in\Dn.
\end{equation}
It includes the equality statement recalled in Theorem~C. Applying it to $tF$ and $-tF$, for $t>0$, gives
\begin{equation}\label{eq:exponential-bound}
\int_{\Sph}e^{t\abs F}\dd
\leq2\exp\left(t\abs{\mean F}+\frac{t^2}{2(n+1)!}\E(F)\right).
\end{equation}
In particular $Z(F)$ is finite and strictly positive. We write
\[
\Lzero=\left\{F\in\Dn:\int_{\Sph}xe^F\dd=0\right\},
\qquad x\in\R^Q\ \text{corresponding to }\zeta\in\C^{n+1}.
\]
We note that the real and complex moment conditions are equivalent.

\subsection{Estimates for the localization}\label{sec:localization}
The Aubin-type inequality under the zero-moment condition proved in
\cite{BFM} contains a lower-order term. In the present
notation, for every $\beta>1/2$ there are nonnegative constants
$C_1(\beta,n)$ and $C_2(\beta,n)$ such that
\begin{equation}\label{eq:BFM-Aubin-remainder}
\mathcal J_{\beta,n}(F)
+C_1(\beta,n)\norm{\Ls^{(Q-1)/4}F}_2^2
+C_2(\beta,n)\geq0,
\qquad F\in\Lzero.
\end{equation}
The constants are independent of $F$, but the lower-order term is not.
Thus \eqref{eq:BFM-Aubin-remainder} is not yet the bound
$\mathcal J_{\beta,n}(F)\geq-C_{\beta,n}$ needed below.
In Lemma~\ref{lem:subcritical}, we remove this additional term and obtain
\eqref{eq:subcritical-no-remainder}, with only an additive constant
independent of $F$. Its proof uses the full-space exponential estimate
and the cutoff estimates in \cite{BFM}.
For the reader's convenience, we record these two estimates below in
our notation; the dimension parameter of that paper agrees with the
present $n$.

Set $\kappa=2/n$ and, for $\lambda>0$, define
\[
\mathcal L_\lambda=\kappa\Ls\PP+\lambda^{1/(n+1)}\Ls(I-\PP),
\qquad E_\lambda(G)=\ip G{\mathcal L_\lambda^{n+1}G}.
\]
The projections commute with $\Ls$. By \cite{BFM}, every smooth real-valued $G$ satisfies
\begin{equation}\label{eq:aux-endpoint}
\log\int_{\Sph}e^G\dd\leq\mean G+a_n(\lambda)E_\lambda(G)+C_\lambda,
\end{equation}
where
\[
a_n(\lambda)=\frac1{2(n+1)!}\left[1+\frac1\lambda
\sum_{k=1}^\infty\frac{(k+n-1)!}{(n-1)!k![k+n/2]^{n+1}}\right]
\longrightarrow\frac1{2(n+1)!}\quad(\lambda\to\infty).
\]
The summand is of order $k^{-2}$. The constant $C_\lambda$ is independent of $G$. The estimate \eqref{eq:aux-endpoint} applies to smooth real-valued functions without requiring them to belong to $\PR$. This will allow us to apply it after multiplication by a cutoff function.

We next specialize the cutoff estimates \cite{BFM}. For a fixed real smooth cutoff function $\chi$, a real smooth $V\in\PR$ with zero mean, and $\epsilon>0$, they give
\[
E_\lambda(\chi V)\leq D_\chi(V)+\epsilon E_\lambda(V)
 +C_{n,\lambda,\epsilon,\chi}\ip V{\Ls^{n}V},
\]
where
\begin{equation}\label{eq:cutoff-leading}
D_\chi(V)=\kappa^{n+1}
\begin{cases}
\displaystyle\int_{\Sph}\chi^2\abs{\Ls^{(n+1)/2}V}^2\dd,& n\text{ odd},\\[4pt]
\displaystyle\int_{\Sph}\chi^2\abs{\nabla_H\Ls^{n/2}V}^2\dd,& n\text{ even}.
\end{cases}
\end{equation}
Here the sphere's horizontal gradient is normalized by $\int\abs{\nabla_H W}^2\dd=\ip W{\Ls W}$. In the quoted estimates the terms involving $(I-\PP)V$ vanish because $V\in\PR$. The remaining error has the form $\ip V{\Ls^{n}V}$ displayed in the formulas in \cite{BFM}. The cutoff product is treated in the full function space, since multiplication by $\chi$ need not preserve $\PR$.

For two cutoffs with $\chi_1^2+\chi_2^2\leq1$, the leading terms satisfy
\[
D_{\chi_1}(V)+D_{\chi_2}(V)\leq\kappa^{n+1}\ip V{\Ls^{n+1}V}=E_\lambda(V).
\]
This follows directly from \eqref{eq:cutoff-leading}, using the horizontal-gradient identity when $n+1$ is odd. Moreover, \eqref{eq:spectrum} and \eqref{eq:L-eigenvalue} imply
\[
E_\lambda(V)=\ip V{(\kappa\Ls)^{n+1}V}\leq\E(V),
\]
because $k^{n+1}\leq\lambda_k$. Consequently,
\begin{equation}\label{eq:two-cutoff-bound}
\sum_{i=1}^2E_\lambda(\chi_iV)
\leq(1+2\epsilon)\E(V)
 +C_{n,\lambda,\epsilon}\ip V{\Ls^{n}V}.
\end{equation}
The last constant is uniform when the cutoffs are chosen from a fixed finite family. We will use this estimate in the proof of Lemma~\ref{lem:subcritical}.

\subsection{Hardy-space formulas}
We use the complex Hardy space $H^2(\Ball)$, consisting of holomorphic functions $h$ on $\Ball$ such that
\[
\sup_{0<r<1}\int_{\Sph}\abs{h(r\zeta)}^2\dd<\infty.
\]
We identify these functions with their $L^2$ boundary values and use the boundary $L^2(d\sigma)$ norm and the same inner-product convention. For a multi-index $\gamma\in\N_0^{n+1}$, set $\gamma!=\prod_j\gamma_j!$ and $\abs\gamma=\sum_j\gamma_j$. The standard monomial norms and normalized monomials are
\begin{equation}\label{eq:monomial-norm}
\norm{z^\gamma}_2^2=\frac{n!\gamma!}{(n+\abs\gamma)!},\qquad
e_\gamma=\left(\frac{(n+\abs\gamma)!}{n!\gamma!}\right)^{1/2}z^\gamma.
\end{equation}
The functions $e_\gamma$ form an orthonormal basis of $H^2(\Ball)$. The dimension and reproducing kernel of $\mathcal H_{k,0}$ are
\begin{equation}\label{eq:reproducing-kernel}
d_k=\binom{k+n}{n},\qquad
K_k(\zeta,\eta)=d_k(\zeta\cdot\bar\eta)^k.
\end{equation}
These formulas, including the normalization by probability measure, are the usual Hardy-space formulas; see \cite{Rudin} and \cite{Zhu}.

Let $S_jh=z_jh$ and write $\varepsilon_j$ for the $j$th unit multi-index. The coordinate multiplication operators are bounded, and
\begin{equation}\label{eq:one-shift}
S_j^*e_\gamma=\left(\frac{\gamma_j}{n+\abs\gamma}\right)^{1/2}e_{\gamma-\varepsilon_j}
\quad\text{if }\gamma_j>0,
\end{equation}
with value zero if $\gamma_j=0$. This is the standard adjoint formula for coordinate multiplication; it also follows by taking the ratio of the two monomial norms in \eqref{eq:monomial-norm}. Let $h=\sum_{m\geq0}h_m$ be the homogeneous expansion of $h$, and set
\[
\Rcal(h)=\sum_{m\geq1}m\norm{h_m}_2^2.
\]
For smooth boundary values this equals $\ip h{Rh}$, where $R=\sum_jz_j\partial_{z_j}$ is the holomorphic radial derivative. The summation identity \eqref{eq:shift-sum} on the Hardy space $H^2(\Ball)$ will be proved in Section~4.

\section{The subcritical Moser-Onofri inequality under zero-moment constraint and existence of their extremals}
In this section, we establish the subcritical Moser--Onofri inequality under zero-moment constraints and prove the existence of its extremals, that is, the attainment of the infimum of $\Jf$ on $\Lzero$ for $\alpha>1/2$. We then derive their Euler--Lagrange equation, establish their smoothness, and eliminate the moment multiplier in the range $1/2<\alpha\leq1$ needed for their classification. For this purpose, we first prove the following lemma, which provides smooth approximation and local variations preserving the constraints. We write $M(F)=\int_{\Sph}xe^F\dd$.

\begin{lemma}\label{lem:correction}
For every $1\leq q<\infty$, the map $F\mapsto e^F$ from $\Dn$ to $L^q(\Sph)$ is twice continuously Fr\'echet differentiable, with
\begin{equation}\label{eq:exp-derivatives}
D(e^F)[H]=e^FH,\qquad D^2(e^F)[H,K]=e^FHK.
\end{equation}
For each $F\in\Lzero$, there are a neighborhood $\mathcal U_F$ of $F$ in $\Dn$ and $C^2$ maps $c:\mathcal U_F\to\R$ and $b:\mathcal U_F\to\R^Q$, with $c(F)=0$ and $b(F)=0$, such that
\begin{equation}\label{eq:local-correction}
\mathcal C_F(G)=G+c(G)+x\cdot b(G),\qquad
Z(\mathcal C_F(G))=Z(F),\quad M(\mathcal C_F(G))=0.
\end{equation}
In particular, there are smooth pluriharmonic functions $F_m\in\Lzero$ such that
\begin{equation}\label{eq:approx-conclusions}
Z(F_m)=Z(F),\qquad F_m\to F\text{ in }\Dn,
\qquad e^{F_m}\to e^F\text{ in every finite }L^p.
\end{equation}
\end{lemma}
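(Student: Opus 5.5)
The plan has three parts: the differentiability of $F\mapsto e^F$, the construction of the correction map by the implicit function theorem, and smooth approximation via that correction.

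\emph{Differentiability.} Everything rests on the exponential bound \eqref{eq:exponential-bound}. For $F,H\in\Dn$ we have the Taylor remainder
\[
e^{F+H}-e^F-e^FH-\tfrac12e^FH^2=\tfrac12H^3\int_0^1(1-s)^2e^{F+sH}\,ds .
\]
Apply Hölder with three factors: $e^{F+sH}$ in $L^{2q}$, which is uniformly bounded for $\norm H_{\Dn}\leq1$ by \eqref{eq:exponential-bound}, and $H^3$ in $L^{2q}$. Bound the latter using $\norm H_{p}\leq C_p\norm H_{\Dn}$, which follows from $\int e^{t\abs H}\leq2e^{t\abs{\mean H}+Ct^2\E(H)}$ after scaling $t\sim1/\norm H_{\Dn}$. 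This gives an $O(\norm H_{\Dn}^3)$ remainder in $L^q$, so $F\mapsto e^F$ is twice Fréchet differentiable with the derivatives in \eqref{eq:exp-derivatives}. Continuity of $F\mapsto e^FHK$ as a bilinear-map-valued function follows the same way from $\norm{e^{F'}-e^F}_{2q}\to0$. This last fact is the first-order estimate above applied once more.

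\emph{Correction map.} Since $\Phi(G,c,b)=(Z(G+c+x\cdot b)-Z(F),\,M(G+c+x\cdot b))$ maps $\Dn\times\R\times\R^Q\to\R\times\R^Q$, it is $C^2$ by the first part. The coordinate functions $x_i$ lie in $\mathcal H_{1,0}\oplus\mathcal H_{0,1}\subset\Dn$, and the integrals against $1$ and $x$ are continuous on $L^1$, which justifies this. At $(F,0,0)$ the partial derivative in $(c,b)$ is the Gram matrix
\[
A=\int_{\Sph}\begin{pmatrix}1\\x\end{pmatrix}\begin{pmatrix}1& x^{T}\end{pmatrix}e^F\dd .
\]
This matrix is positive definite, because $v_0+v\cdot x$ vanishes only on a null set unless $v_0=0$ and $v=0$, while $e^F>0$ almost everywhere. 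The implicit function theorem then yields $C^2$ maps $c,b$ on a neighborhood $\mathcal U_F$ with $c(F)=0$ and $b(F)=0$, which is exactly \eqref{eq:local-correction}. Note that $\mathcal C_F(G)\in\Dn$ automatically, and $M(\mathcal C_F(G))=0$ places it in $\Lzero$.

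\emph{Smooth approximation.} Take the spectral truncations $G_m=\mean F+\sum_{k\leq m}(f_k+\bar f_k)$. These are real pluriharmonic polynomials, hence smooth, and $G_m\to F$ in $\Dn$ because spectral truncations converge there. Set $F_m=\mathcal C_F(G_m)$ for large $m$. Since $x\cdot b$ and the constant are smooth pluriharmonic, $F_m$ is smooth and pluriharmonic. Continuity of $c$ and $b$ gives $F_m\to F$ in $\Dn$, and the first part gives $e^{F_m}\to e^F$ in every $L^p$. The constraints $Z(F_m)=Z(F)$ and $M(F_m)=0$ hold by construction.

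The main obstacle is the uniform Lipschitz control of $e^{F}$ in $L^q$ with respect to the $\Dn$ norm, which is needed for $C^2$ rather than mere differentiability. It is handled by the scaled form of \eqref{eq:exponential-bound}: replacing $F$ by $pF$ bounds $\norm{e^{F}}_p$ by $\exp(\abs{\mean F}+p\E(F)/(2(n+1)!))$, locally uniformly.
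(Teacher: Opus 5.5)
Your proposal is correct and follows essentially the paper's proof. Both use the $L^p$ embedding and the locally uniform bounds on $e^F$ from \eqref{eq:exponential-bound}, control the Taylor remainders by H\"older, apply the implicit function theorem, and set $F_m=\mathcal C_F(T_mF)$; your Gram matrix coincides with the paper's $Z(F)\,\mathrm{diag}(1,C_F)$ because $M(F)=0$.

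One step needs tightening. A second-order expansion with cubic remainder does not by itself give twice Fr\'echet differentiability. You can close this in either of two ways. One is to invoke the converse Taylor theorem, which applies because your remainder bound is locally uniform and the bilinear maps $(H,K)\mapsto e^FHK$ depend continuously on $F$. The other, which is what the paper does, is to apply the first-order remainder estimate in $L^{2q}$ and multiply by $H$; this shows directly that $F\mapsto D(e^F)$ is differentiable in operator norm.
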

\begin{proof}
The exponential estimate \eqref{eq:exponential-bound}, applied on the unit ball of $\Dn$ and followed by rescaling, gives
\begin{equation}\label{eq:form-Lp}
\norm H_p\leq C_{p,n}\norm H_{\Dn}\qquad(1\leq p<\infty).
\end{equation}
It also bounds $\norm{e^F}_p$ uniformly on bounded subsets of $\Dn$. Thus H\"older's inequality and $|e^F-e^G|\leq|F-G|(e^F+e^G)$ yield
\begin{equation}\label{eq:exp-continuity}
\norm{e^F-e^G}_q\leq C_{n,q,R}\norm{F-G}_{\Dn},
\qquad \norm F_{\Dn},\norm G_{\Dn}\leq R.
\end{equation}
For $\norm F_{\Dn}\leq R$ and $\norm K_{\Dn}\leq1$, Taylor's formula similarly gives
\begin{equation}\label{eq:exp-Taylor-error}
\norm{e^{F+K}-e^F-e^FK}_q
\leq\tfrac12\norm{e^F}_{3q}\norm{e^{|K|}}_{3q}\norm K_{6q}^2
\leq C_{n,q,R}\norm K_{\Dn}^2.
\end{equation}
This proves the first derivative in \eqref{eq:exp-derivatives}. Applying the same estimate in $L^{2q}$ and multiplying by $H$ proves differentiability of $F\mapsto D(e^F)$ in operator norm. The continuity of the second derivative follows from \eqref{eq:exp-continuity} and H\"older's inequality. In particular, $Z$, $M$, and $\Jf$ are $C^2$ on $\Dn$.

Fix $F\in\Lzero$, put $\rho_F=e^F/Z(F)$, and consider
\[
\Psi(G,c,b)=\bigl(Z(G+c+x\cdot b)-Z(F),\ M(G+c+x\cdot b)\bigr).
\]
This map is $C^2$, vanishes at $(F,0,0)$, and satisfies
\begin{equation}\label{eq:constraint-matrix}
D_{(c,b)}\Psi(F,0,0)=Z(F)
\begin{pmatrix}1&0\\0&C_F\end{pmatrix},
\qquad C_F=\int_{\Sph}xx^T\rho_F\dd.
\end{equation}
The matrix $C_F$ is positive definite: for $v\ne0$, the function $(x\cdot v)^2$ is positive outside a set of surface measure zero, and $\rho_F>0$ almost everywhere. The implicit-function theorem therefore gives \eqref{eq:local-correction}. Differentiating the constraints at $G=F$ yields
\[
Dc(F)[H]=-\int H\rho_F\dd,\qquad
Db(F)[H]=-C_F^{-1}\int xH\rho_F\dd.
\]
Consequently, if $\int H\rho_F\dd=0$ and $\int xH\rho_F\dd=0$, then $F_t=\mathcal C_F(F+tH)$ preserves both constraints for all sufficiently small $t$ and satisfies $F_0=F$ and $\dot F_0=H$.

Finally, let $T_mF$ be the spectral truncation of $F$ to degrees at most $m$, including the constant term. Since $T_mF\to F$ in $\Dn$, the functions $F_m=\mathcal C_F(T_mF)$ are well defined for large $m$, smooth, and pluriharmonic. The continuity of $c$ and $b$ gives $F_m\to F$ in $\Dn$, while \eqref{eq:local-correction} preserves the constraints and \eqref{eq:exp-continuity} gives the stated convergence of the exponentials.
\end{proof}

When $\mean F=0$, subtracting $\mean{F_m}$ from these approximations preserves the zero-moment condition and convergence in $\Dn$, although it may change the exponential mass. We will use either the zero-mean or the mass-one normalization as needed. The same lemma also gives the following second-variation formula at a constant.

\begin{remark}\label{rem:hessian}
At $F=0$, the tangent conditions are $\int H\dd=0$ and $\int xH\dd=0$. Lemma~\ref{lem:correction} supplies a $C^2$ admissible curve with velocity $H$. Since $D\Jf(0)=0$ on the full energy space, the second variation along this curve is
\begin{equation}\label{eq:hessian}
D^2\Jf(0)[H,H]=\frac\alpha{(n+1)!}\E(H)-\norm H_2^2.
\end{equation}
The tangent conditions remove the constant and degree-one components. Since $\lambda_2=(n+2)!$, this form is nonnegative on the tangent space exactly when $\alpha\geq1/(n+2)$; on a pure degree-two variation it equals $(\alpha(n+2)-1)\norm H_2^2$. At $\alpha=1/(n+2)$, nonnegativity of the Hessian alone does not imply nonlinear local minimality.
\end{remark}

We now prove the subcritical Moser--Onofri inequality under the zero-moment condition. Lemma~\ref{lem:subcritical} removes the lower-order term in \eqref{eq:BFM-Aubin-remainder} and
will help to establish the existence of corresponding extremals using the standard compact methods.

\begin{lemma}\label{lem:subcritical}
For every $\beta>1/2$, there is a constant $C_{\beta,n}\geq0$,
independent of $F$, such that
\begin{equation}\label{eq:subcritical-no-remainder}
\log Z(F)-\mean F
\leq\frac{\beta}{2(n+1)!}\E(F)+C_{\beta,n},
\qquad F\in\Lzero.
\end{equation}
Consequently, for $1/2<\beta<\alpha$,
\begin{equation}\label{eq:subcritical-parameter-gap}
\Jf(F)\geq\frac{\alpha-\beta}{2(n+1)!}\E(F)-C_{\beta,n},
\qquad F\in\Lzero.
\end{equation}
In particular, for every $\alpha>1/2$ there is $C_{\alpha,n}\geq0$ such that
\begin{equation}\label{eq:minimizing-sequence-bound}
\Jf(F)\geq\frac{2\alpha-1}{8(n+1)!}\E(F)-C_{\alpha,n},
\qquad F\in\Lzero.
\end{equation}
\end{lemma}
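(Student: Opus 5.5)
We follow the localization strategy described in the introduction. By density (Lemma~\ref{lem:correction}) and continuity of $Z$, $\mean\cdot$, $\E$ on $\Dn$, it suffices to treat smooth pluriharmonic $F\in\Lzero$. Both sides of \eqref{eq:subcritical-no-remainder} are unchanged if a constant is added to $F$, so we also assume $\mean F=0$.

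\textbf{Step 1: two caps with fixed mass.} Put $\rho=e^F/Z(F)$, so that $\int x\rho\dd=0$. Choose a fixed finite family of pairs of smooth cutoffs $(\chi_1,\chi_2)$ with $0\le\chi_i\le1$ and $\chi_1^2+\chi_2^2\le1$. Each $\chi_i$ equals one on a spherical cap $A_i$, and the two caps in a pair lie in opposite half-spaces $\{x\cdot v\ge\delta\}$ and $\{x\cdot v\le-\delta\}$, with $v$ ranging over a finite net of directions. A standard Aubin-type argument gives $b>0$ and a pair from the family with $\int_{A_i}\rho\dd\ge b$ for $i=1,2$. Indeed, the zero first moment prevents $\rho$ from concentrating near a single point or on a single side of every hyperplane: if all mass in direction $v$ sat in $\{|x\cdot v|<\delta\}$ for every $v$ in the net, a covering argument would give a contradiction, so some balanced positive mass exists on both sides.

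\textbf{Step 2: split into low and high modes.} Fix $K$, to be chosen later, and write $F=U+V$. Here $U$ is the projection onto degrees $1\le k\le K$ and $V$ collects the degrees $>K$. These are orthogonal, so $\E(F)=\E(U)+\E(V)$. For the finite-dimensional part, $\norm U_\infty\le B_K\E(U)^{1/2}$, with $B_K$ coming from the reproducing kernels \eqref{eq:reproducing-kernel}. Then
\[
bZ(F)\le e^{\norm U_\infty}\int_{\Sph}e^{\chi_iV}\dd .
\]
Apply \eqref{eq:aux-endpoint} to $\chi_iV$, with $\lambda$ large, and estimate $\mean{\chi_iV}$ by $\norm V_2$. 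Averaging the two logarithms over $i=1,2$ gives
\[
\log Z(F)\le \log\tfrac1b + B_K\E(U)^{1/2} + \tfrac{a_n(\lambda)}2\sum_i E_\lambda(\chi_iV) + C\norm V_2 + C_\lambda .
\]
By \eqref{eq:two-cutoff-bound}, $\sum_iE_\lambda(\chi_iV)\le(1+2\epsilon)\E(V)+C_{n,\lambda,\epsilon}\ip V{\Ls^nV}$.

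\textbf{Step 3: absorb the lower-order term.} On degrees $k>K$,
\[
\ip V{\Ls^nV}\le \sup_{k>K}\frac{(nk/2)^n}{\lambda_k}\,\E(V)=O(K^{-1})\,\E(V).
\]
Also $\norm V_2^2\le\E(V)/\lambda_{K}$, so $C\norm V_2$ is at most $\eta\E(V)$ plus a constant. Given $\beta>1/2$, first choose $\lambda$ and $\epsilon$ so that $\tfrac12 a_n(\lambda)(1+2\epsilon)<\beta/(2(n+1)!)$. Then choose $K$ large, so that the remaining coefficient of $\E(V)$ is still at most $\beta/(2(n+1)!)$. The constants that appear depend on $K$, $\lambda$, $\epsilon$ and the finite cutoff family, but not on $F$. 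The term $B_K\E(U)^{1/2}$ is handled by completing the square:
\[
B_K\E(U)^{1/2}\le \frac{\beta}{2(n+1)!}\E(U)+\frac{(n+1)!B_K^2}{2\beta}.
\]
Adding the two parts gives \eqref{eq:subcritical-no-remainder}.

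\textbf{Step 4: the consequences.} The bound \eqref{eq:subcritical-parameter-gap} follows by writing $\Jf=\mathcal J_{\beta,n}+\frac{\alpha-\beta}{2(n+1)!}\E$ and using \eqref{eq:subcritical-no-remainder}. For \eqref{eq:minimizing-sequence-bound}, take $\beta=(2\alpha+1)/4$. Then $\alpha-\beta=(2\alpha-1)/4$, and the coefficient becomes $(2\alpha-1)/(8(n+1)!)$.

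\textbf{Main obstacle.} The delicate point is Step 1: extracting, uniformly in $F$, two separated caps from a fixed finite family, each carrying mass at least $b$, using only the zero-moment condition. The second delicate point is making sure the cutoff estimate applies to $V$ in the stated form even though $e^V$ itself is not centered.
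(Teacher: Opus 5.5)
Your proposal is correct and follows the paper's proof essentially step for step. It uses the same splitting $F=U+V$ at a fixed degree $K$ and the same bound $bZ(F)\le e^{\norm U_\infty}\int e^{\chi_iV}\dd$ on two fixed caps, followed by \eqref{eq:aux-endpoint} and \eqref{eq:two-cutoff-bound} with the lower-order term absorbed through $\eta_K\to0$. The parameter order is the same ($\lambda,\epsilon$, then $K$), the square is completed in $\E(U)^{1/2}$, and you take $\beta=(2\alpha+1)/4$. Your density reduction on the final inequality is a harmless simplification of the paper's limit passage in the intermediate estimate.

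The only step you leave as a sketch is Step~1, and your one-line justification does not actually use the zero moment. The paper does it explicitly. Pigeonholing over the $2Q$ coordinate caps $\{\pm x_j\ge\delta_1\}$ gives a cap $\{s\ge\delta_1\}$ of $\rho$-mass at least $1/(2Q)$. Then $\int_{\{s<0\}}(-s)\rho\dd=\int_{\{s>0\}}s\rho\dd\ge\delta_1/(2Q)$ is used with the \emph{smaller} threshold $\delta_2=\delta_1/(8Q)$ on the opposite side, which gives $\int_{\{s\le-\delta_2\}}\rho\dd\ge\delta_1/(2Q)-\delta_2\ge\delta_2$. With equal thresholds $\pm\delta$ in one direction this fails, because the balancing mass can sit entirely in the strip $-\delta<s<0$.
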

\begin{proof}
We first establish the estimate for the low- and high-degree parts of
$F$ that will be used to prove \eqref{eq:subcritical-no-remainder}.
For each $\gamma>1/2$, we claim that there are an integer
$K=K(n,\gamma)\geq1$ and constants $B_1,B_2\geq0$, independent of $F$,
such that, if $F\in\Lzero$ and $\mean F=0$, the decomposition
\begin{equation}\label{eq:high-low}
F=U+V,\qquad U=\sum_{k=1}^K(\Pi_kF+\overline{\Pi_kF}),\qquad V=F-U
\end{equation}
satisfies
\begin{equation}\label{eq:logmass-high-low}
\log Z(F)\leq\frac\gamma{2(n+1)!}\E(V)+B_1\E(U)^{1/2}+B_2.
\end{equation}

We first choose two separated caps carrying a fixed amount of the full density. Assume that $F\in\Lzero$ is smooth with
$\langle F\rangle_\sigma=0$, and set
\[
\rho=\frac{e^F}{Z(F)}.
\]
Then $\int\rho\dd=1$ and $\int x\rho\dd=0$.
Set
\[
\delta_1=\frac1{2\sqrt Q},\qquad \delta_2=\frac{\delta_1}{8Q}.
\]
The $2Q$ caps $\{\pm x_j\geq\delta_1\}$, $1\leq j\leq Q$, cover the
sphere: otherwise $|x|^2<Q\delta_1^2=1/4$, contrary to $|x|=1$.
Since they cover the sphere and
\[
\int_{S^{2n+1}}\rho\,d\sigma=1,
\]
at least one of them has $\rho$-mass at least $1/(2Q)$.
Hence, for some choice of sign and index, $s=\pm x_j$,
\[
\int_{\{s\geq\delta_1\}}\rho\,d\sigma\geq\frac1{2Q}.
\]
The zero-moment in this coordinate gives
\[
\int_{\{s<0\}}(-s)\rho\dd=\int_{\{s>0\}}s\rho\dd\geq\frac{\delta_1}{2Q}.
\]
Splitting the negative half-space, we obtain
\[
\int_{\{s<0\}}(-s)\rho\,d\sigma
\leq
\delta_2\int_{\{-\delta_2<s<0\}}\rho\,d\sigma
+
\int_{\{s\leq-\delta_2\}}\rho\,d\sigma .
\]
Since
\[
\int_{\{s<0\}}(-s)\rho\,d\sigma\geq\frac{\delta_1}{2Q},
\]
it follows that
\[
\int_{\{s\leq-\delta_2\}}\rho\,d\sigma
\geq
\frac{\delta_1}{2Q}-\delta_2 .
\]
Hence
\[
\int_{\{s\leq-\delta_2\}}\rho\dd
\geq\frac{\delta_1}{2Q}-\delta_2\geq\delta_2.
\]
Choose smooth functions $0\leq\chi_i\leq1$ with
\[
\begin{array}{lll}
\chi_1=1\text{ on }s\geq\delta_1,&\quad&\supp\chi_1\subset\{s>\delta_1/2\},\\
\chi_2=1\text{ on }s\leq-\delta_2,&&\supp\chi_2\subset\{s<-\delta_2/2\}.
\end{array}
\]
Fix these functions once for each of the $2Q$ choices of $s$. This is a finite family independent of $F$. Their supports are disjoint. The sets $A_i=\{\chi_i=1\}$, where the cutoffs equal one, satisfy
\[
\int_{A_i}\rho\dd\geq b_i,\qquad b_1=\frac1{2Q},\quad b_2=\delta_2,
\qquad\chi_1^2+\chi_2^2\leq1.
\]

We next apply the exponential estimate to the localized high-degree part. For any $K$, let $U,V$ be as in \eqref{eq:high-low}. On $A_i$, $\chi_iV=V$, so
\[
b_iZ(F)\leq\int_{A_i}e^{U+V}\dd
\leq e^{\norm U_\infty}\int_{\Sph}e^{\chi_iV}\dd.
\]
Applying \eqref{eq:aux-endpoint} to each $\chi_iV$ gives two
inequalities. After taking logarithms and averaging them, we obtain
\begin{equation}\label{eq:localized-logmass}
\log Z(F)\leq\norm U_\infty
+\frac{a_n(\lambda)}2\sum_{i=1}^2 E_\lambda(\chi_iV)
+\frac12\sum_{i=1}^2\int\chi_iV\dd
+C_{n,\lambda}.
\end{equation}
Here the fixed positive numbers $b_i$ have been absorbed into the
constant $C_{n,\lambda}$. The zero-moment condition is used only for
the normalized density
\[
\frac{e^{U+V}}{Z(F)}.
\]
The densities $e^V$ and $e^{\chi_iV}$ do not necessarily satisfy the zero-moment condition.

To control the lower-order and linear terms, we use the fact that $V$ contains only degrees $k>K$. By \eqref{eq:L-eigenvalue} and \eqref{eq:spectrum},
\begin{equation}\label{eq:etaK}
\eta_K:=\sup_{k>K}\frac{\nu_k^{n}}{\lambda_k}
\leq\frac{[n/2]^{n}}{K+1}\longrightarrow0.
\end{equation}
Consequently,
\[
\ip V{\Ls^{n}V}\leq\eta_K\E(V),\qquad
\norm V_2\leq\lambda_{K+1}^{-1/2}\E(V)^{1/2}.
\]
Since $0\leq\chi_i\leq1$ and the measure has mass one, for every $\theta>0$,
\[
\left|\frac12\sum_{i=1}^2\int\chi_iV\dd\right|
\leq\norm V_2\leq\theta\E(V)+\frac1{4\theta\lambda_{K+1}}.
\]
Using \eqref{eq:two-cutoff-bound} in \eqref{eq:localized-logmass}, we therefore obtain
\begin{equation}\label{eq:coefficient-before-choice}
\log Z(F)\leq\norm U_\infty+
\left[\frac{a_n(\lambda)}2+a_n(\lambda)\epsilon
 +\frac{a_n(\lambda)}2C_{n,\lambda,\epsilon}\eta_K+\theta\right]\E(V)+C.
\end{equation}

We now fix the parameters independently of $F$. Let
\[
d_\gamma=\frac\gamma{2(n+1)!}-\frac1{4(n+1)!}>0.
\]
Choose $\lambda$ sufficiently large so that
\[
a_n(\lambda)/2\leq\frac1{4(n+1)!}+\frac{d_\gamma}{4}.
\]
Next choose $\epsilon>0$ sufficiently small so that
\[
a_n(\lambda)\epsilon\leq\frac{d_\gamma}{4}.
\]
The cutoff family has already been fixed, hence $C_{n,\lambda,\epsilon}$ is now fixed. Choose $K$ large enough that
the term involving $\eta_K$ in the bracket in
\eqref{eq:coefficient-before-choice} is at most $d_\gamma/4$,
using \eqref{eq:etaK}. Finally set
\[
\theta=\frac{d_\gamma}{4}.
\]
The bracket is then at most
\[
\frac{\gamma}{2(n+1)!}.
\]
All parameters are therefore fixed independently of the admissible
function and of any minimizing sequence.

It remains to estimate the low-degree part. The diagonal kernel in \eqref{eq:reproducing-kernel} gives
$\abs{\Pi_kF(\zeta)}\leq d_k^{1/2}\norm{\Pi_kF}_2$. The Cauchy--Schwarz inequality  in the finite sum yields
\[
\norm U_\infty\leq
\left(2\sum_{k=1}^K\frac{d_k}{\lambda_k}\right)^{1/2}\E(U)^{1/2}
=\left(\frac2{n!}\sum_{k=1}^K\frac1k\right)^{1/2}\E(U)^{1/2}.
\]
Indeed $d_k/\lambda_k=1/[n!k]$. Together with the preceding parameter choices, this proves \eqref{eq:logmass-high-low} for smooth $F$.

For general $F\in\Lzero$ with zero mean, apply Lemma~\ref{lem:correction} and subtract the mean from each approximation. They converge in $\Dn$, and their exponential integrals converge by \eqref{eq:exp-continuity}. Their fixed low-degree projections converge in every norm, while their high-degree parts converge in the energy norm. Hence \eqref{eq:logmass-high-low} passes to the limit with the same constants.

To conclude, fix $\beta>1/2$ and use \eqref{eq:logmass-high-low} with $\gamma=\beta$.
The constants $K,B_1,B_2$ are then fixed by $n$ and $\beta$. For every
$t\geq0$, completing the square gives
\[
B_1t\leq\frac{\beta}{2(n+1)!}t^2
+\frac{(n+1)!B_1^2}{2\beta}.
\]
Apply this with $t=\E(U)^{1/2}$ and use
$\E(F)=\E(U)+\E(V)$. We obtain
\[
\begin{split}
\log Z(F)
&\leq\frac{\beta}{2(n+1)!}\E(V)
+\frac{\beta}{2(n+1)!}\E(U)
+B_2+\frac{(n+1)!B_1^2}{2\beta}\\
&=\frac{\beta}{2(n+1)!}\E(F)+C_{\beta,n},
\end{split}
\]
where $C_{\beta,n}=B_2+(n+1)!B_1^2/(2\beta)$ is independent of $F$.
This proves \eqref{eq:subcritical-no-remainder} for $\mean F=0$.
For arbitrary $F\in\Lzero$, apply the same estimate to $F-\mean F$.
Subtracting a constant preserves the zero-moment condition and the
energy, and
$\log Z(F-\mean F)=\log Z(F)-\mean F$.
Thus \eqref{eq:subcritical-no-remainder} holds on all of $\Lzero$.
In particular, no norm of $F$ occurs in the additive remainder.

For $1/2<\beta<\alpha$, we now have
\[
\begin{split}
\Jf(F)
&=\mathcal J_{\beta,n}(F)
+\frac{\alpha-\beta}{2(n+1)!}\E(F)\\
&\geq\frac{\alpha-\beta}{2(n+1)!}\E(F)-C_{\beta,n}.
\end{split}
\]
This proves \eqref{eq:subcritical-parameter-gap}. Taking
$\beta=(\alpha+1/2)/2$ yields
\[
\frac{\alpha-\beta}{2(n+1)!}
=\frac{2\alpha-1}{8(n+1)!}>0.
\]
Renaming the constant gives \eqref{eq:minimizing-sequence-bound}.
\end{proof}

Next, we prove the existence of extremals of the subcritical Moser--Onofri inequality under zero-moment constraints.

\begin{proposition}\label{prop:existence}
For every $\alpha>1/2$, the functional $\Jf$ attains its infimum on $\Lzero$. Every minimizing sequence, after subtracting the mean from each term, has a subsequence converging strongly in $\Dn$ to a minimizer.
\end{proposition}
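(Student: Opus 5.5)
The plan is the direct method, using the coercivity bound \eqref{eq:minimizing-sequence-bound} from Lemma~\ref{lem:subcritical}. Fix $\alpha>1/2$ and set $m_\alpha=\inf_{\Lzero}\Jf$. Since $\Jf(0)=0$ and \eqref{eq:minimizing-sequence-bound} gives $\Jf\geq-C_{\alpha,n}$, we have $-C_{\alpha,n}\leq m_\alpha\leq0$.

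Let $(F_j)\subset\Lzero$ be a minimizing sequence. Both $\Jf$ and the constraint are invariant under adding constants, so we may replace $F_j$ by $F_j-\mean{F_j}$ and assume $\mean{F_j}=0$. Then \eqref{eq:minimizing-sequence-bound} gives $\E(F_j)\leq C$, so $(F_j)$ is bounded in $\Dn$. After passing to a subsequence, $F_j\rightharpoonup F$ weakly in $\Dn$, with $\mean F=0$.

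The next step is compactness of the embedding $\Dn\hookrightarrow L^p$ for every finite $p$. By \eqref{eq:form-domain}, the high-degree tails satisfy $\norm{\sum_{k>N}(f_k+\bar f_k)}_2^2\leq \lambda_{N+1}^{-1}\E$, which tends to zero uniformly on bounded sets. The low-degree projections live in a finite-dimensional space, where weak and strong convergence coincide. Hence $F_j\to F$ in $L^2$. Interpolating this with the uniform $L^{2p}$ bound \eqref{eq:form-Lp} gives $F_j\to F$ in every $L^p$, and therefore almost everywhere along a further subsequence.

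For the exponentials, the uniform bound \eqref{eq:exponential-bound} with $t=2$ shows $\sup_j\norm{e^{F_j}}_2<\infty$. Combined with $|e^{F_j}-e^F|\leq|F_j-F|(e^{F_j}+e^F)$ and H\"older's inequality, this gives $e^{F_j}\to e^F$ in $L^1$. Consequently $Z(F_j)\to Z(F)>0$ and $M(F_j)\to M(F)$, so $M(F)=0$ and $F\in\Lzero$.

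Weak lower semicontinuity of the quadratic form gives $\E(F)\leq\liminf\E(F_j)$. Together with $\mean{F_j}=\mean F=0$ and the convergence of $\log Z$, this yields $\Jf(F)\leq\liminf\Jf(F_j)=m_\alpha$. So $F$ is a minimizer.

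It remains to upgrade weak convergence to strong convergence in $\Dn$. Since $\Jf(F_j)\to m_\alpha=\Jf(F)$ and the non-quadratic terms converge, we get $\E(F_j)\to\E(F)$. In the Hilbert space $\Dn$, weak convergence together with convergence of norms gives strong convergence. Because every subsequence has a further subsequence converging strongly to a minimizer, this proves the claim about minimizing sequences as stated.

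The only delicate point is the uniform integrability of $e^{F_j}$, which is needed so that the moment constraint passes to the limit and $Z$ stays away from $0$ and $\infty$. This is handled by the full-space exponential bound \eqref{eq:exponential-bound}. That bound holds on all of $\Dn$ without the constraint, with the critical coefficient $1/(2(n+1)!)$, so any fixed exponent $t$ suffices once the energy is bounded. No concentration can occur, because the energy bound holds at the critical scale uniformly.
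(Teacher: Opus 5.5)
Your proposal is correct and follows the paper's proof essentially step for step. Both arguments use the coercivity bound from Lemma~\ref{lem:subcritical} after subtracting means, weak compactness, strong $L^2$ convergence via spectral truncation, $L^1$ convergence of $e^{F_j}$ from the full-space exponential bound, weak lower semicontinuity, and the upgrade to strong convergence via $\E(F_j)\to\E(F)$. The only difference is inessential: you use the pointwise estimate $|e^a-e^b|\leq|a-b|(e^a+e^b)$ with H\"older, whereas the paper uses uniform integrability and Vitali's theorem.
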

\begin{proof}
We prove the proposition by the direct method. Fix $\alpha>1/2$ and
choose $1/2<\beta<\alpha$. By Lemma~\ref{lem:subcritical},
\[
\Jf(F)\geq\frac{\alpha-\beta}{2(n+1)!}\E(F)-C_{\beta,n},
\qquad F\in\Lzero.
\]
Constants belong to the admissible class and give functional value zero.
Thus $-\infty<c_\alpha:=\inf_{\Lzero}\Jf\leq0$.
Let $F_j$ be a minimizing sequence. Replacing each $F_j$ by
$F_j-\langle F_j\rangle_\sigma$ preserves its zero moment and its
functional value, and gives $\mean{F_j}=0$.
After discarding finitely many terms, we have $\Jf(F_j)\leq1$.
The positive difference $\alpha-\beta$ therefore gives the uniform bound
\begin{equation}\label{eq:minimizing-energy-bound}
\E(F_j)\leq
\frac{2(n+1)!}{\alpha-\beta}\bigl(1+C_{\beta,n}\bigr)
=:M<\infty.
\end{equation}
Both $\beta$ and $C_{\beta,n}$ have been fixed independently of the
sequence. Hence the sequence with zero mean is bounded in $\Dn$.
Weak compactness in the zero-mean Hilbert space gives, after passing to a subsequence, $F_j\rightharpoonup F$ in $\Dn$ and $\mean F=0$.

To pass to the limit in the nonlinear term, we first establish strong convergence in $L^2$. For a fixed $K$, $T_KF_j\to T_KF$ in the finite-dimensional space of degrees at most $K$. Also
\[
\norm{F_j-T_KF_j}_2^2\leq\frac{M}{\lambda_{K+1}},\qquad
\norm{F-T_KF}_2^2\leq\frac{M}{\lambda_{K+1}}.
\]
The second estimate follows from weak lower semicontinuity of the form. First let $j\to\infty$ at fixed $K$, then let $K\to\infty$; this proves $F_j\to F$ in $L^2$. A further subsequence converges almost everywhere.

Fix $p>1$. The known inequality \eqref{eq:BFM}, applied to $pF_j$, gives
\[
\int e^{pF_j}\dd\leq\exp\left(\frac{p^2M}{2(n+1)!}\right).
\]
For a measurable set $A\subset\Sph$, H\"older's inequality gives
\[
\int_A e^{F_j}\dd\leq
\exp\left(\frac{pM}{2(n+1)!}\right)\sigma(A)^{1-1/p},
\]
uniformly in $j$. Thus $\{e^{F_j}\}$ is uniformly integrable. Since $F_j\to F$ almost everywhere, Vitali's theorem gives $e^{F_j}\to e^F$ in $L^1$. Together with $\abs x=1$, this implies
\begin{equation}\label{eq:mass-moment-limit}
Z(F_j)\to Z(F),\qquad M(F)=\lim_jM(F_j)=0.
\end{equation}
Jensen's inequality and $\mean F=0$ give $Z(F)\geq1$; finiteness follows from \eqref{eq:BFM}. Therefore the logarithms of the masses also converge. The limit belongs to $\Lzero$, and weak lower semicontinuity gives
\[
\Jf(F)\leq\liminf_j\Jf(F_j)=c_\alpha.
\]
By the definition of the infimum, equality holds, so $F$ is a minimizer.

Finally, the defining identity for $\Jf$ and \eqref{eq:mass-moment-limit} imply
\[
\E(F_j)=\frac{2(n+1)!}{\alpha}\bigl(\Jf(F_j)+\log Z(F_j)\bigr)
\longrightarrow\frac{2(n+1)!}{\alpha}\bigl(\Jf(F)+\log Z(F)\bigr)=\E(F).
\]
Since both means are zero, weak convergence also gives
\[
\norm{F_j-F}_{\Dn}^2
=\E(F_j)+\E(F)-2\ip{\A F}{F_j}\longrightarrow0.
\]
The convergence $\ip{\A F}{F_j}\to\E(F)$ follows from weak convergence through the continuous pairing associated with the quadratic form. This completes the proof of strong convergence for a subsequence of every normalized minimizing sequence.
\end{proof}

Finally, we derive the Euler--Lagrange equation, establish the smoothness of extremals, and eliminate the moment multiplier. These arguments apply to constrained critical points in the energy space, whose definition we specify first.

A function $F\in\Lzero$ is called a \emph{constrained critical point of $\Jf$ in $\Dn$ under the zero-moment condition} if
\[
D\Jf(F)[H]=0
\quad\text{for every }H\in\Dn\text{ such that }
\int_{\Sph}xHe^F\dd=0.
\]
The derivative is taken with respect to the real Hilbert-space norm of $\Dn$. By Lemma~\ref{lem:correction}, this is equivalent to
\[
\left.\frac{d}{dt}\Jf(F_t)\right|_{t=0}=0
\]
for every $C^1$ curve $t\mapsto F_t$ in $\Dn$ satisfying $F_0=F$ and $M(F_t)=0$. In particular, every minimizer of $\Jf$ on $\Lzero$ is a constrained critical point in this sense.

Since adding a constant preserves both $\Jf$ and the zero-moment condition, we normalize $Z(F)=1$. With $\rho=e^F$, the tangent space to the constraints $Z(F)=1$ and $M(F)=0$ is
\[
\mathcal T_F=\left\{H\in\Dn:\int H\rho\dd=0,\ \int xH\rho\dd=0\right\}.
\]
Indeed, $1,x_1,\ldots,x_Q$ belong to $\Dn$, and the derivative of $(Z,M)$ on their span has the invertible matrix \eqref{eq:constraint-matrix}. Lemma~\ref{lem:correction} therefore realizes every vector in $\mathcal T_F$ as the velocity of a curve satisfying both constraints exactly. If $H\in\Dn$ satisfies $\int xH\rho\dd=0$, subtracting the constant $\int H\rho\dd$ places it in $\mathcal T_F$ and leaves $D\Jf(F)[H]$ unchanged. Consequently, the definition above is equivalent to $D\Jf(F)[H]=0$ for all $H\in\mathcal T_F$ under the mass normalization.

We now derive the weak Euler--Lagrange equation and prove smoothness for $\alpha>0$. We then eliminate the moment multiplier for $0<\alpha\leq1$.

\begin{proposition}\label{prop:Euler}
Let $\alpha>0$, and let $F\in\Lzero$ be a constrained critical point, normalized by $Z(F)=1$. Then $F$ is smooth and there is $\ell\in\R^Q$ such that
\begin{equation}\label{eq:Euler-multiplier}
\frac\alpha{(n+1)!}\A F=\PP\bigl((1+x\cdot\ell)e^F-1\bigr).
\end{equation}
If $x\cdot a$ is the real degree-one component of $F$ and $C=\int xx^Te^F\dd$, then
\begin{equation}\label{eq:degree-one}
\alpha a=QC\ell.
\end{equation}
\end{proposition}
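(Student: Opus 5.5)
The plan is to obtain \eqref{eq:Euler-multiplier} from the Lagrange multiplier rule applied to the single vector constraint $M(F)=0$. Smoothness will then follow by a Sobolev bootstrap, and \eqref{eq:degree-one} by testing the weak equation against the coordinate functions.

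\emph{Weak equation.} By Lemma~\ref{lem:correction}, $\Jf$ and $M$ are $C^1$ on $\Dn$. Using $Z(F)=1$,
\[
D\Jf(F)[H]=\frac{\alpha}{(n+1)!}\ip{\A F}{H}+\mean H-\int He^F\dd,
\qquad DM(F)[H]=\int xHe^F\dd .
\]
The map $DM(F):\Dn\to\R^Q$ is surjective, because on $\operatorname{span}\{x_1,\dots,x_Q\}\subset\Dn$ it is given by the positive definite matrix $C$ of \eqref{eq:constraint-matrix}. By assumption, $D\Jf(F)$ vanishes on $\ker DM(F)$, and this kernel has finite codimension. Elementary linear algebra then gives $\ell\in\R^Q$ with $D\Jf(F)[H]=-\ell\cdot DM(F)[H]$ for all $H\in\Dn$. (The sign of $\ell$ is chosen to match \eqref{eq:Euler-multiplier}.) This means
\[
\frac{\alpha}{(n+1)!}\ip{\A F}{H}=\int H\bigl((1+x\cdot\ell)e^F-1\bigr)\dd,\qquad H\in\Dn .
\]
Put $G=(1+x\cdot\ell)e^F-1$, which lies in every $L^p$ by \eqref{eq:exponential-bound}. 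Since $H$ is real pluriharmonic, the right side equals $\ip{\PP G}{H}$. This is \eqref{eq:Euler-multiplier} in the weak sense.

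\emph{Degree-one identity.} I would test the weak equation with $H=x_j$; the coordinates $\Rea\zeta_k$ and $\Ima\zeta_k$ are real pluriharmonic of degree one. On these functions $\A x_j=\lambda_1x_j=(n+1)!\,x_j$, so the left side is $\alpha\ip F{x_j}=\alpha\ip{x\cdot a}{x_j}=\alpha a_j/Q$, using $\int x_ix_j\dd=\delta_{ij}/Q$. On the right side, $\int x_j\dd=0$ and $\int x_je^F\dd=0$ by the constraint, leaving $\int x_j(x\cdot\ell)e^F\dd=(C\ell)_j$. Hence $\alpha a=QC\ell$.

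\emph{Smoothness.} In each degree $k$, $\A$ has eigenvalue $\lambda_k\sim k^{n+1}$, while $(1-\Delta_{\Sph})^{1/2}$ has eigenvalue $\sim k$. The projection $\PP$ commutes with $\Delta_{\Sph}$, so it is bounded on every $W^{s,2}$. Consequently $\PP G\in W^{s,2}$ implies $F\in W^{s+n+1,2}$.

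The bootstrap then runs as follows.
\begin{enumerate}
\item Start from $G\in L^2$ to get $F\in W^{n+1,2}$.
\item Since $n+1>(2n+1)/2$, this space embeds in $C^0$ and is a Banach algebra.
\item Composition estimates for the smooth function $\exp$ on such algebra Sobolev spaces give $e^F\in W^{s,2}$ whenever $F\in W^{s,2}$ with $s>(2n+1)/2$. Multiplication by the smooth factor $1+x\cdot\ell$ preserves $W^{s,2}$, so $G\in W^{s,2}$.
\item Feeding this back gives $F\in W^{s+n+1,2}$, and iterating yields $F\in C^\infty$.
\end{enumerate}

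I expect the main obstacle to be this regularity step, specifically justifying the composition estimate $e^F\in W^{s,2}$ for fractional $s$ on the sphere. I would handle it by working with integer $s$ via the chain rule and H\"older's inequality in local charts, or by appealing to the Moser-type estimate $\norm{e^F-1}_{W^{s,2}}\leq C(\norm F_\infty)\norm F_{W^{s,2}}$. Everything else is linear algebra and spectral bookkeeping.
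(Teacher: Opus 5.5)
Your proposal is correct and follows essentially the same route as the paper. The multiplier comes from $D\Jf(F)$ vanishing on the finite-codimensional kernel of $DM(F)$; the paper makes this explicit via $H_0=H-\int H\rho\dd-x\cdot C^{-1}\int xH\rho\dd$. The degree-one identity comes from testing with $x_j$, and smoothness from the gain of $n+1$ derivatives iterated in $W^{j(n+1),2}$.

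Two small remarks. First, to obtain the factor $1+x\cdot\ell$ you need $D\Jf(F)[H]=+\ell\cdot DM(F)[H]$, not $-\ell\cdot DM(F)[H]$; this sign slip is harmless because $\ell$ is only asserted to exist. Second, every Sobolev index in the iteration is an integer $j(n+1)$, so the fractional composition estimates you worry about are never needed.
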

\begin{proof}
By Lemma~\ref{lem:correction}, the first variation is well defined on the energy space and satisfies, for every $H\in\Dn$,
\begin{equation}\label{eq:first-variation}
D\Jf(F)[H]=\frac\alpha{(n+1)!}\ip{\A F}H+\int H\dd-\int H\rho\dd.
\end{equation}
The matrix $C$ is positive definite by \eqref{eq:constraint-matrix}. Define
\[
H_0=H-\int H\rho\dd-x\cdot C^{-1}\int xH\rho\dd.
\]
The identities $\int\rho\dd=1$ and $\int x\rho\dd=0$ show directly that $H_0\in\mathcal T_F$. Let $q_i=D\Jf(F)[x_i]$. Since $D\Jf(F)[H_0]=0$ and $D\Jf(F)[1]=0$, we obtain
\[
D\Jf(F)[H]=q^TC^{-1}\int xH\rho\dd
=\ell\cdot\int xH\rho\dd,\qquad \ell=C^{-1}q.
\]
Together with \eqref{eq:first-variation}, we obtain
\begin{equation}\label{eq:weak-Euler}
\frac\alpha{(n+1)!}\ip{\A F}H
=\int H\bigl((1+x\cdot\ell)\rho-1\bigr)\dd
\qquad(H\in\Dn),
\end{equation}
Here $\ip{\A F}H$ is the continuous bilinear pairing associated with the energy form. We interpret \eqref{eq:Euler-multiplier} weakly by \eqref{eq:weak-Euler}, with test functions in $\Dn$. Since these tests are real pluriharmonic, projecting the right-hand side leaves the pairing unchanged. For any smooth real test function $\varphi$, using $H=\PP\varphi$ also gives \eqref{eq:Euler-multiplier} as a distributional identity. Conversely, a function satisfying the mass and moment constraints and \eqref{eq:weak-Euler} has $D\Jf(F)[H]=0$ for every $H\in\mathcal T_F$. Thus the weak multiplier equation is equivalent to constrained criticality in $\Dn$.

Testing \eqref{eq:first-variation} with $x_i$ gives $q_i=\alpha a_i/Q$. Indeed, the last two integrals vanish, $\A x_i=(n+1)!x_i$, and
\begin{equation}\label{eq:coordinate-normalization}
\int x_ix_j\dd=\frac{\delta_{ij}}Q,\qquad
\int Fx_i\dd=\frac{a_i}Q.
\end{equation}
Since $q=C\ell$, this proves \eqref{eq:degree-one} as well.

To establish regularity, we use the spectral decomposition of $\A$. Set
\[
G=(1+x\cdot\ell)e^F-1.
\]
It has zero mean because $Z(F)=1$ and $\int_{\Sph}xe^F\dd=0$. By \eqref{eq:exponential-bound}, $G\in L^2$ (in fact $G\in L^p$ for every finite $p$). In this proof, $W^{s,2}$ abbreviates $W^{s,2}(\Sph)$, the ordinary $L^2$ Sobolev space on the sphere of real dimension $2n+1$. If $P_m$ is the projection onto ordinary spherical harmonics of degree $m$, use the spectral norm
\[
\norm u_{W^{s,2}}^2=\sum_{m\geq0}b_m^s\norm{P_mu}_2^2,
\qquad b_m=1+m(m+2n).
\]
The decomposition \eqref{eq:bispherical} refines the ordinary one by $m=j+k$. Hence $\PP$ commutes with $P_m$ and is a contraction in every $W^{s,2}$.

Write $\pi_{\mathcal P,k}G=\Pi_kG+\overline{\Pi_kG}$ for the degree-$k$ component of $\PP G$, $k\geq1$. Testing the weak equation in each such space determines its coefficients:
\begin{equation}\label{eq:inverse-series}
F-\mean F=\frac{(n+1)!}{\alpha}\sum_{k\geq1}\lambda_k^{-1}\pi_{\mathcal P,k}G.
\end{equation}
For $k\geq1$, the estimates
\[
b_k\leq2(n+1)k^2,
\qquad
\lambda_k\geq k^{n+1}
\]
imply
\[
\frac{b_k^{n+1}}{\lambda_k^2}
\leq(2(n+1))^{n+1}.
\]
For a positive integer $L$, denote the finite sum in \eqref{eq:inverse-series} by
\[
W_L=\frac{(n+1)!}{\alpha}\sum_{k=1}^L\lambda_k^{-1}\pi_{\mathcal P,k}G.
\]
If $G\in W^{s,2}$ with $s\geq0$, orthogonality and the preceding comparison give
\begin{align}
\norm{W_L}_{W^{s+n+1,2}}^2
&=\left(\frac{(n+1)!}{\alpha}\right)^2
\sum_{k=1}^L\frac{b_k^{s+n+1}}{\lambda_k^2}
\norm{\pi_{\mathcal P,k}G}_2^2\notag\\
&\leq\left(\frac{(n+1)!}{\alpha}\right)^2(2(n+1))^{n+1}
\sum_{k=1}^L b_k^s\norm{\pi_{\mathcal P,k}G}_2^2\notag\\
&\leq\left(\frac{(n+1)!}{\alpha}\right)^2(2(n+1))^{n+1}\norm G_{W^{s,2}}^2.
\label{eq:finite-spectral-gain}
\end{align}
In the last step, $\pi_{\mathcal P,k}G$ is an orthogonal component of $P_kG$, and different total degrees are orthogonal. The same estimate for $W_{L_2}-W_{L_1}$ has the tail of the convergent $W^{s,2}$ series of $G$ on its right. It follows that $W_L$ is a Cauchy sequence in $W^{s+n+1,2}$, which justifies convergence of the inverse series in the stronger Sobolev space. Its limit has exactly the nonconstant coefficients of $F$, determined by the weak equation, and has mean zero. Therefore it equals $F-\mean F$. Letting $L\to\infty$ in \eqref{eq:finite-spectral-gain} proves
\begin{equation}\label{eq:spectral-gain}
\norm{F-\mean F}_{W^{s+n+1,2}}
\leq\frac{(n+1)!}{\alpha}(2(n+1))^{(n+1)/2}\norm G_{W^{s,2}}
\qquad(G\in W^{s,2},\ s\geq0).
\end{equation}
In particular, $G\in L^2$ gives $F\in W^{n+1,2}$. This initial gain of regularity allows us to apply the Sobolev composition estimates in the next step.

We can now iterate this gain using Sobolev composition. The usual Sobolev embedding on a compact manifold gives $W^{s,2}\subset C^0$ for $s>(2n+1)/2$; see \cite{TaylorI}. For integer $s$ in this range, the standard product and smooth-composition estimates give $uv\in W^{s,2}$ for $u,v\in W^{s,2}$, and $e^u\in W^{s,2}$ for real $u\in W^{s,2}$. The product and composition estimates are given in \cite{TaylorIII}. A finite coordinate cover and a smooth partition of unity give these estimates on the sphere. In the composition statement one may first apply the estimate to $e^u-1$ and then add the constant function.

Since $(n+1)>(2n+1)/2$, $F\in W^{n+1,2}$ implies $G\in W^{n+1,2}$. Estimate \eqref{eq:spectral-gain} gives $F\in W^{2(n+1),2}$. If $F\in W^{j(n+1),2}$ for some integer $j\geq1$, the same estimates
yield $G\in W^{j(n+1),2}$, and \eqref{eq:spectral-gain} gives $F\in W^{(j+1)(n+1),2}$.
Iterating this argument gives $F\in W^{j(n+1),2}$ for every integer $j\geq1$.
For any nonnegative integer $r$, choose $j$ such that
\[
j(n+1)>\frac{2n+1}{2}+r .
\]
The Sobolev embedding theorem implies
\[
F\in C^r.
\]
Since $r$ is arbitrary, $F$ is smooth.
\end{proof}

The smoothness established in Proposition~\ref{prop:Euler} allows us to differentiate the CR conformal action. We use these variations to show that the moment multiplier vanishes for $0<\alpha\leq1$. Let $\cstr$ be the standard complex structure on $\R^Q$, so $\cstr^T=-\cstr$ and $\cstr^2=-I$. For $v\in\R^Q$, write $x_v=x\cdot v$. The infinitesimal CR automorphism associated with the holomorphic vector field $z\mapsto a-(z\cdot\bar a)z$, where $a$ corresponds to $v$, is the real tangential field
\begin{equation}\label{eq:real-conformal-field}
X_v(x)=v-x_vx-x_{\cstr v}\cstr x.
\end{equation}
For example, in the first real coordinate direction it is obtained by differentiating
\[
\tau_t(z)=\left(\frac{z_1+\tanh t}{1+z_1\tanh t},
\frac{\operatorname{sech}t}{1+z_1\tanh t}(z_2,\ldots,z_{n+1})\right)
\]
at $t=0$. Unitary conjugation and rescaling give arbitrary $v$. These are ball automorphisms, and their boundary affine action is \eqref{eq:affine-action}.

We will use the following two identities:
\begin{equation}\label{eq:field-identities}
\Div X_v=-Qx_v,\qquad
X_vx_w=v\cdot w-x_vx_w+x_{\cstr v}x_{\cstr w}.
\end{equation}
For completeness, we verify these identities directly. The field $v-x_vx$ is the spherical gradient of $x_v$ and has divergence $-(Q-1)x_v$. The rotation field $x\mapsto\cstr x$ has zero divergence, and
$(\cstr x)(x_{\cstr v})=(\cstr x)\cdot\cstr v=x_v$. The last term of \eqref{eq:real-conformal-field} therefore has divergence $-x_v$. This gives the first identity. For the second, take the Euclidean scalar product of \eqref{eq:real-conformal-field} with $w$ and use $w\cdot\cstr x=-x_{\cstr w}$.

For a smooth $F\in\PR$, differentiation of its affine action gives the smooth real pluriharmonic function
\[
Y_v=X_vF-Qx_v.
\]
The invariance of the pluriharmonic space under the affine action, quoted in \eqref{eq:affine-action}, justifies this assertion. If $\rho=e^F$, then
\begin{equation}\label{eq:density-derivative}
Y_v\rho=\Div(\rho X_v),\qquad\int Y_v\rho\dd=0.
\end{equation}
The last identity is the divergence theorem on the closed sphere.

\begin{proposition}\label{prop:eliminate}
Let $0<\alpha\leq1$. For every constrained critical point in Proposition~\ref{prop:Euler}, the multiplier $\ell$ vanishes. Thus
\begin{equation}\label{eq:Euler-free}
\frac\alpha{(n+1)!}\A F=\PP(e^F-1),\qquad Z(F)=1,
\end{equation}
and the degree-one component of $F$ also vanishes.
\end{proposition}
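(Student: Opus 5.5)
The plan is to test the weak Euler--Lagrange equation \eqref{eq:weak-Euler} with the smooth real pluriharmonic functions $Y_v=X_vF-Qx_v$ generated by the infinitesimal CR automorphisms. We evaluate $D\Jf(F)[Y_v]$ in two ways. On one side we use the conformal invariance of $\Jone$ and of the energy. On the other side we use the multiplier representation $D\Jf(F)[H]=\ell\cdot\int xH\rho\dd$ obtained in the proof of Proposition~\ref{prop:Euler}. Comparing the two gives a linear identity between $a$ and $\ell$. Combined with the degree-one relation \eqref{eq:degree-one}, this will force $\ell=0$. Then \eqref{eq:degree-one} gives $a=0$, and \eqref{eq:Euler-multiplier} reduces to \eqref{eq:Euler-free}.

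\emph{Step 1: invariance side.} Since $F$ is smooth by Proposition~\ref{prop:Euler}, the curve $t\mapsto F_{\tau_t}$ given by the affine action \eqref{eq:affine-action} along the flow of $X_v$ is differentiable in $\Dn$, and its velocity at $t=0$ is $Y_v$. Invariance of $\Jone$ then gives $D\Jone(F)[Y_v]=0$. Similarly, \eqref{eq:energy-composition} applied along the flow gives $\ip{\A F}{X_vF}=0$. We write
\[
\Jf=\Jone+\frac{\alpha-1}{2(n+1)!}\E.
\]
Using $\A x_v=(n+1)!x_v$ and \eqref{eq:coordinate-normalization}, we obtain
\[
D\Jf(F)[Y_v]=\frac{\alpha-1}{(n+1)!}\ip{\A F}{X_vF-Qx_v}=(1-\alpha)\,a\cdot v .
\]

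\emph{Step 2: multiplier side.} By \eqref{eq:density-derivative}, $\int Y_v\rho\dd=0$ and $\int xY_v\rho\dd=\int x\Div(\rho X_v)\dd$. Integrating by parts and using \eqref{eq:field-identities} with $w=\ell$, we get
\[
\ell\cdot\int xY_v\rho\dd=-\int\rho\,X_vx_\ell\dd
=-v\cdot\ell+v^TC\ell-(\cstr v)^TC(\cstr\ell).
\]
Comparing this with Step 1 for all $v\in\R^Q$, and using $\cstr^T=-\cstr$, gives
\[
(1-\alpha)a=\bigl(C+\cstr^TC\cstr-I\bigr)\ell .
\]

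\emph{Step 3: conclusion.} We multiply the identity of Step 2 by $\alpha$, substitute $\alpha a=QC\ell$ from \eqref{eq:degree-one}, and pair with $\ell$. This yields
\[
(1-\alpha)Q\,\ell^TC\ell=\alpha\int\rho\bigl(x_\ell^2+x_{\cstr\ell}^2-\abs\ell^2\bigr)\dd .
\]
The vectors $\ell$ and $\cstr\ell$ are orthogonal and of equal length, so $x_\ell^2+x_{\cstr\ell}^2\leq\abs\ell^2\abs x^2=\abs\ell^2$. Since $Q\geq4$, this inequality is strict on a set of positive measure when $\ell\neq0$. Because $\rho>0$, the right-hand side is then strictly negative. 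The left-hand side is nonnegative because $\alpha\leq1$ and $C$ is positive definite. Hence $\ell=0$, and then $a=0$.

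The main obstacle is the rigorous justification of Step 1. We must check that the affine action curve is $C^1$ into $\Dn$ with velocity $Y_v$; smoothness of $F$ and smooth dependence of $\tau_t$ and $\log J_{\tau_t}$ on $t$ handle this. We must also differentiate the invariance identities, which requires the $C^1$ property of $\Jf$ from Lemma~\ref{lem:correction}. Finally, the curve need not preserve the moment constraint, so we must use the full multiplier form of the first variation rather than its vanishing on $\mathcal T_F$.
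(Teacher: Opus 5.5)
Your proposal follows the paper's proof: you test the full multiplier relation with $Y_v$, obtain $D\Jf(F)[Y_v]=(1-\alpha)\,a\cdot v$ from conformal invariance (you use invariance of $\E$ under composition where the paper uses invariance of $Z$; the two are equivalent), compute $\ell\cdot DM(F)[Y_v]$ by integration by parts, and combine this with $\alpha a=QC\ell$ to reach a sign contradiction.

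There is one sign slip. Since $(\cstr v)^TC(\cstr\ell)=v^T\cstr^TC\cstr\ell$, Step 2 should read $(1-\alpha)a=(C-\cstr^TC\cstr-I)\ell$, which is the paper's $-H\ell$. Consequently the integrand in Step 3 is $x_\ell^2-x_{\cstr\ell}^2-\abs\ell^2$, not $x_\ell^2+x_{\cstr\ell}^2-\abs\ell^2$. This is harmless: the correct integrand is pointwise smaller, and it is already strictly negative off the two points $\pm\ell/\abs\ell$ by $x_\ell^2\leq\abs\ell^2$ alone. So the Bessel argument using orthogonality of $\ell$ and $\cstr\ell$ and $Q\geq4$ is not needed.
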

\begin{proof}
Proposition~\ref{prop:Euler} gives smoothness of $F$. We may therefore use the infinitesimal conformal variations as test functions in \eqref{eq:weak-Euler}. Define
\[
C=\int xx^T\rho\dd,\qquad H=I-C+\cstr^TC\cstr.
\]
For any $w\in\R^Q$, integration by parts, \eqref{eq:density-derivative}, and \eqref{eq:field-identities} give
\begin{align}
w\cdot DM(F)[Y_v]
&=\int x_w\Div(\rho X_v)\dd=-\int (X_vx_w)\rho\dd\notag\\
&=-v\cdot w+w^TCv-w^T\cstr^TC\cstr v
=-w^THv.
\label{eq:moment-derivative}
\end{align}
Thus $DM(F)[Y_v]=-Hv$. The matrix $H$ is positive definite. Indeed, for $v\ne0$,
\begin{equation}\label{eq:H-positive}
v^THv=\int\bigl(\abs v^2-x_v^2+x_{\cstr v}^2\bigr)\rho\dd>0.
\end{equation}
The integrand is nonnegative by $\abs{x_v}\leq\abs v$; it can vanish only at the two points $x=\pm v/\abs v$. The density is positive, and these points have zero surface measure.

Next use the exact identity
\[
\Jf=\alpha\Jone+(1-\alpha)(\mean F-\log Z(F)).
\]
Since both $\Jone$ and the exponential integral are invariant under the CR conformal action, differentiation gives
\[
D\Jf(F)[Y_v]=(1-\alpha)\int Y_v\dd.
\]
Using \eqref{eq:field-identities}, integration by parts, and
\eqref{eq:coordinate-normalization}, we obtain
\[
\int Y_v\,d\sigma
=
\int X_vF\,d\sigma
=
Q\int Fx_v\,d\sigma
=
a\cdot v .
\]
On the other hand, the multiplier relation proved in \eqref{eq:weak-Euler} is valid for every test in $\Dn$, not only tangent tests. Together with \eqref{eq:moment-derivative}, it gives
\[
D\Jf(F)[Y_v]=\ell\cdot DM(F)[Y_v]=-\ell^THv.
\]
Combining the last three identities and using $\alpha a=QC\ell$ yields
\begin{equation}\label{eq:multiplier-matrix}
\left(H+\frac{Q(1-\alpha)}\alpha C\right)\ell=0.
\end{equation}
The matrix $C$ is positive definite, $H$ is positive definite by \eqref{eq:H-positive}, and the coefficient of $C$ is nonnegative. Hence the matrix in \eqref{eq:multiplier-matrix} is invertible and $\ell=0$. Formula \eqref{eq:degree-one} then gives $a=0$. We emphasize that $Y_v$ need not preserve the moment; the calculation uses the full multiplier relation.
\end{proof}

\section{Classification of extremals for Moser-Onofri inequality under zero-moment constraint}
In this section, we show that every minimizer of $\Jf$ on $\Lzero$ is constant for $1/2<\alpha\leq1$. This classifies the extremals of the subcritical Moser--Onofri inequality under zero-moment constraints obtained in Section~3 and will also help to determine the sharp Moser-Onofri inequality under zero-moment constraint in Theorem 1.1. We first derive a holomorphic moment identity from the Euler--Lagrange equation. We then use the zero-moment condition and the summation identity \eqref{eq:shift-sum} on the Hardy space $H^2(\Ball)$ to prove Theorem~\ref{thm:moment-estimate}. Comparing this inequality with the moment identity, and using its equality case, gives the classification of extremals .

We begin with a smooth centered solution $F$ of \eqref{eq:Euler-free} and write
\begin{equation}\label{eq:holomorphic-reduction}
f=\Pi F-\frac12\mean F,\qquad F=f+\bar f,\qquad h=e^f,\qquad \rho=e^F=\abs h^2.
\end{equation}
The projection $\Pi$ preserves each ordinary spherical Sobolev space: on every total degree it is an orthogonal projection. Hence $f$ is smooth on $\Sph$. Its expansion consists only of homogeneous holomorphic polynomials, so it has a holomorphic extension into $\Ball$, as in the Hardy-space description in \cite{Rudin,Zhu}. More explicitly, the harmonic extension of its boundary expansion is $f(r\zeta)=f(0)+\sum_{k\geq1}r^kf_k(\zeta)$. This is the Poisson extension of smooth data and is smooth on the closed ball. The series is holomorphic in the interior, and thus agrees there with the Hardy extension. Its constant term is $f(0)=\mean F/2\in\R$. Therefore $h=e^f$ is holomorphic, nonzero, and smooth up to the boundary, and
\[
\norm h_2=1,\qquad
\ip h{S_jh}=\int_{\Sph}\zeta_j\rho\dd=0\quad(1\leq j\leq n+1).
\]
Write $f=f(0)+\sum_{k\geq1}f_k$, where $f_k\in\mathcal H_{k,0}$. The holomorphic degree-$k$ part of \eqref{eq:Euler-free} is
\begin{equation}\label{eq:holomorphic-Euler}
\Pi_k\rho=\frac{\alpha\lambda_k}{(n+1)!}f_k\qquad(k\geq1).
\end{equation}
In particular $f_1=0$.

For any nonnegative integrable density $\rho$ of mass one, its holomorphic moments define the nonnegative numbers
\begin{align}
s_k&=\iint_{\Sph\times\Sph}(\zeta\cdot\bar\eta)^k
\rho(\zeta)\rho(\eta)\,d\sigma_\zeta\,d\sigma_\eta\notag\\
&=\sum_{\abs\beta=k}\frac{k!}{\beta!}
\left|\int_{\Sph}\zeta^\beta\rho(\zeta)\,d\sigma_\zeta\right|^2,
\qquad k\geq1.
\label{eq:sk}
\end{align}
The second equality follows by the finite multinomial expansion. Both sides are defined even when $\rho$ is only in $L^1$, since the monomials and the kernel in the first line are bounded by one. When $\rho\in L^2$, the reproducing kernel \eqref{eq:reproducing-kernel} also gives
\begin{equation}\label{eq:projected-moment}
\norm{\Pi_k\rho}_2^2=d_ks_k.
\end{equation}
Indeed, self-adjointness and idempotence of $\Pi_k$ give
$\norm{\Pi_k\rho}_2^2=\ip\rho{\Pi_k\rho}$, and substituting its kernel gives precisely $d_ks_k$. The zero moment is equivalent to $s_1=0$. For $\rho=\abs h^2$, write
\[
\Scal(h)=\sum_{k\geq2}s_k,
\]
allowing initially the value $+\infty$. The Euler--Lagrange equation relates these moments to $\Rcal(h)$ as follows.

\begin{proposition}\label{prop:Euler-moment}
For $f,h,\rho$ obtained from a smooth solution as in \eqref{eq:holomorphic-reduction}--\eqref{eq:holomorphic-Euler}, the series $\Scal(h)$ is finite and
\begin{equation}\label{eq:Euler-moment}
\Rcal(h)=\frac{n+1}\alpha\Scal(h).
\end{equation}
\end{proposition}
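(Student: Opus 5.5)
The plan is to compute $\Rcal(h)$ in two ways: once through the radial derivative of $h=e^f$, and once through the degree-by-degree Euler--Lagrange relation \eqref{eq:holomorphic-Euler}. The reproducing-kernel identity \eqref{eq:projected-moment} then converts the result into the moments $s_k$.

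\emph{Step 1: rewrite $\Rcal(h)$ as an integral against $\rho$.} By the discussion after \eqref{eq:holomorphic-reduction}, $h$ is holomorphic and smooth up to the boundary, so $\Rcal(h)=\ip h{Rh}$ with $R=\sum_jz_j\partial_{z_j}$. Since $R$ is a derivation and $h=e^f$, we have $Rh=h\,Rf$. Hence
\[
\Rcal(h)=\int_{\Sph}\abs h^2\,Rf\dd=\int_{\Sph}\rho\,Rf\dd .
\]
Here $Rf=\sum_{k\geq1}kf_k$ is smooth on $\Sph$, because $f$ is smooth and $R$ acts on $\mathcal H_{k,0}$ as multiplication by $k$. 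In particular the series $\sum_k kf_k$ converges in $L^2$, and since $\rho\in L^2$ we may pair term by term:
\[
\int\rho\,Rf\dd=\sum_{k\geq1}k\,\ip{\rho}{f_k}=\sum_{k\geq1}k\,\ip{\Pi_k\rho}{f_k}.
\]
This uses that $\rho$ is real and $f_k\in\mathcal H_{k,0}$.

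\emph{Step 2: insert the Euler--Lagrange equation.} By \eqref{eq:holomorphic-Euler}, $f_k=\frac{(n+1)!}{\alpha\lambda_k}\Pi_k\rho$, and $f_1=0$. Therefore
\[
k\,\ip{\Pi_k\rho}{f_k}=\frac{k(n+1)!}{\alpha\lambda_k}\norm{\Pi_k\rho}_2^2=\frac{k(n+1)!\,d_k}{\alpha\lambda_k}\,s_k,
\]
where the last equality is \eqref{eq:projected-moment}. Using $d_k/\lambda_k=1/(n!\,k)$, already noted in the proof of Lemma~\ref{lem:subcritical}, each term equals $\frac{n+1}{\alpha}s_k$. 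Summing over $k\geq2$ gives
\[
\Rcal(h)=\frac{n+1}{\alpha}\sum_{k\geq2}s_k .
\]

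\emph{Step 3: finiteness of $\Scal(h)$.} Every $s_k\geq0$ by \eqref{eq:sk}, and the series in Step 1 converges to the finite number $\int\rho\,Rf\dd$. Hence $\Scal(h)<\infty$, and \eqref{eq:Euler-moment} follows.

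The only point needing care is the termwise pairing in Step 1, that is, the convergence of $\sum k f_k$ in $L^2$. This is guaranteed by the smoothness of $f$, which follows from Proposition~\ref{prop:Euler} and the preservation of ordinary Sobolev spaces by $\Pi$. Everything else is algebra.
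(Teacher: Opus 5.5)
Your proof is correct and follows the paper's route. You write $\Rcal(h)=\int\rho\,Rf\,d\sigma$ using $Rh=hRf$, insert \eqref{eq:holomorphic-Euler} degree by degree together with \eqref{eq:projected-moment} and $k(n+1)!\,d_k/\lambda_k=n+1$, and justify the termwise pairing by the $L^2$ convergence of $\sum_k kf_k$, which comes from the smoothness of $f$; the paper makes the same interchange through the truncations $f^{(L)}$ and then deduces finiteness of $\Scal(h)$ from nonnegativity of the $s_k$, as you do.
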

\begin{proof}
The holomorphic chain rule gives $Rh=hRf$. Since $h$ is smooth,
\begin{equation}\label{eq:radial-chain}
\Rcal(h)=\ip h{Rh}=\int_{\Sph}\rho Rf\dd<\infty.
\end{equation}
For each $k\geq1$, the reality of $\rho$ and the inner-product convention give
\begin{align}
\int\rho f_k\dd
&=\ip\rho{f_k}=\ip{\Pi_k\rho}{f_k}
=\frac{\alpha\lambda_k}{(n+1)!}\norm{f_k}_2^2
=\frac{(n+1)!}{\alpha\lambda_k}\norm{\Pi_k\rho}_2^2.
\label{eq:complex-moment-pairing}
\end{align}
In the second equality $f_k$ lies in the range of $\Pi_k$. In the third, the coefficient in \eqref{eq:holomorphic-Euler} is real. Thus the inner-product convention gives a real, nonnegative value for this integral.

First perform the summation with $f^{(L)}=f(0)+\sum_{k=1}^Lf_k$. By \eqref{eq:complex-moment-pairing} and \eqref{eq:projected-moment},
\begin{equation}\label{eq:finite-Euler-moment}
\int\rho Rf^{(L)}\dd
=\sum_{k=1}^L\frac{k(n+1)!}{\alpha\lambda_k}\norm{\Pi_k\rho}_2^2
=\sum_{k=1}^L\frac{k(n+1)!d_k}{\alpha\lambda_k}s_k.
\end{equation}
Using the explicit values of the eigenvalue and the dimension, we have
\[
\frac{k(n+1)!d_k}{\lambda_k}
=k(n+1)!\frac{(k+n)!}{k!n!}\frac{(k-1)!}{(k+n)!}=(n+1).
\]
The smoothness of $f$ implies $Rf^{(L)}\to Rf$ in $L^2$. Since $\rho\in L^2$, the left side of \eqref{eq:finite-Euler-moment} converges to $\Rcal(h)$, which is \eqref{eq:radial-chain}. The right side is an increasing sum of nonnegative terms. Hence its limit is finite, and
$\Rcal(h)=((n+1)/\alpha)\sum_{k\geq1}s_k$. Finally $s_1=0$, proving \eqref{eq:Euler-moment}.
\end{proof}

To compare $\Scal(h)$ with $\Rcal(h)$, we first prove the summation identity \eqref{eq:shift-sum} on the Hardy space $H^2(\Ball)$. For a multi-index $\beta\in\N_0^{n+1}$, define
\[
S^\beta=S_1^{\beta_1}\cdots S_{n+1}^{\beta_{n+1}},\qquad
S^{*\beta}=(S_1^*)^{\beta_1}\cdots(S_{n+1}^*)^{\beta_{n+1}},\qquad
c_\beta=\frac{(\abs\beta)!}{\beta!}.
\]
The operators $S_j$ commute, and their adjoints commute as well. We use the falling factorial
$(a)_k=a(a-1)\cdots(a-k+1)$, with $(a)_0=1$.

\begin{lemma}\label{lem:shift-sum}
For $h\in H^2(\Ball)$ with $\Rcal(h)<\infty$,
\begin{equation}\label{eq:shift-sum}
\sum_{\beta\ne0}c_\beta\norm{S^{*\beta}h}_2^2=\frac1{n+1}\Rcal(h).
\end{equation}
\end{lemma}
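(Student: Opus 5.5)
The plan is to verify \eqref{eq:shift-sum} by a direct computation in the orthonormal basis $\{e_\gamma\}$ of \eqref{eq:monomial-norm}. Both sides are sums of nonnegative terms, so Tonelli's theorem will justify every rearrangement, and the finiteness of $\Rcal(h)$ is only needed to make the common value finite.

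\emph{Step 1: action of $S^{*\beta}$ on the basis.} Iterating \eqref{eq:one-shift} one coordinate at a time, I will show that
\[
S^{*\beta}e_\gamma=\left(\frac{\gamma!}{(\gamma-\beta)!}\,\frac{(n+\abs\gamma-\abs\beta)!}{(n+\abs\gamma)!}\right)^{1/2}e_{\gamma-\beta}
\qquad\text{if }\beta\leq\gamma,
\]
and $S^{*\beta}e_\gamma=0$ otherwise. Here $\beta\leq\gamma$ means $\beta_j\leq\gamma_j$ for every $j$. This also follows directly from the ratio of monomial norms, since $S^{*\beta}$ is the adjoint of multiplication by $z^\beta$. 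For fixed $\beta$ the map $\gamma\mapsto\gamma-\beta$ is injective, so the images are orthogonal. Writing $h=\sum_\gamma a_\gamma e_\gamma$, this gives
\[
\norm{S^{*\beta}h}_2^2=\sum_{\gamma\geq\beta}\abs{a_\gamma}^2\frac{\gamma!}{(\gamma-\beta)!}\frac{(n+\abs\gamma-\abs\beta)!}{(n+\abs\gamma)!}.
\]

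\emph{Step 2: summing over $\beta$ at fixed $\gamma$.} Interchange the sums and group the $\beta$ by $k=\abs\beta$. Fix $\gamma$ with $\abs\gamma=m$. By the multivariate Vandermonde identity,
\[
\sum_{\abs\beta=k}\frac{k!}{\beta!}\frac{\gamma!}{(\gamma-\beta)!}=k!\sum_{\abs\beta=k}\prod_j\binom{\gamma_j}{\beta_j}=k!\binom mk=(m)_k .
\]
So the coefficient of $\abs{a_\gamma}^2$ on the left side of \eqref{eq:shift-sum} is
\[
\sum_{k=1}^m\frac{(m)_k}{(n+m)_k}.
\]
It depends only on $m$.

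\emph{Step 3: the combinatorial identity.} This is the only real content. It suffices to show that
\[
\sum_{k=0}^m\frac{(m)_k}{(n+m)_k}=\frac{n+m+1}{n+1}.
\]
Subtracting the $k=0$ term, which equals $1$, then leaves exactly $m/(n+1)$. To prove it, I will rewrite each term as
\[
\frac{(m)_k}{(n+m)_k}=\binom{n+m-k}{n}\Big/\binom{n+m}{n}.
\]
The hockey-stick identity gives $\sum_{k=0}^m\binom{n+m-k}{n}=\binom{n+m+1}{n+1}$. The ratio $\binom{n+m+1}{n+1}/\binom{n+m}{n}$ equals $(n+m+1)/(n+1)$, as required.

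\emph{Conclusion.} Summing over $\gamma$ gives
\[
\sum_{\beta\neq0}c_\beta\norm{S^{*\beta}h}_2^2=\sum_\gamma\frac{\abs\gamma}{n+1}\abs{a_\gamma}^2=\frac1{n+1}\sum_{m\geq1}m\norm{h_m}_2^2=\frac1{n+1}\Rcal(h),
\]
using $\norm{h_m}_2^2=\sum_{\abs\gamma=m}\abs{a_\gamma}^2$. I expect the only delicate point to be recognizing the telescoping (hockey-stick) form in Step 3. The remaining steps are bookkeeping with nonnegative series.
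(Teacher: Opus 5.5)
Your proof is correct and follows essentially the same route as the paper. Both arguments use the same basis formula for $\norm{S^{*\beta}e_\gamma}_2^2$, the same falling-factorial (Vandermonde) identity reducing the $c_\beta$-weighted sum over $\abs\beta=k$ to $(m)_k/(n+m)_k$, and the identity $\sum_{k=1}^m (m)_k/(n+m)_k=m/(n+1)$; the only differences are cosmetic: you prove that last identity with the hockey-stick identity where the paper uses an explicit telescoping relation, and you pass to general $h$ by Tonelli on the coefficient expansion (implicitly using that $S^{*\beta}$ is bounded) rather than by truncations and monotone convergence.
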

\begin{proof}
We first compute the sum on a fixed homogeneous degree. Let $\abs\gamma=m$ and $\abs\beta=k\leq m$. Repeated application of \eqref{eq:one-shift} gives
\begin{equation}\label{eq:multi-shift-weight}
\norm{S^{*\beta}e_\gamma}_2^2
=\frac{\prod_{j=1}^{n+1}(\gamma_j)_{\beta_j}}{(n+m)_k},
\end{equation}
with value zero unless $\beta_j\leq\gamma_j$ for every $j$. The denominator records the consecutive total degrees $m,m-1,\ldots,m-k+1$; the numerator records the factors obtained from the $\beta_j$ applications of $S_j^*$ in each coordinate. The elementary falling-factorial multinomial identity gives
\[
\sum_{\abs\beta=k}\frac{k!}{\beta!}\prod_{j=1}^{n+1}(\gamma_j)_{\beta_j}=(m)_k.
\]
Thus the sum of \eqref{eq:multi-shift-weight} with weights $c_\beta$ is $(m)_k/(n+m)_k$.

For an arbitrary $h_m=\sum_{\abs\gamma=m}a_\gamma e_\gamma$, fix $\beta$. Nonzero images of different $e_\gamma$ are multiples of different $e_{\gamma-\beta}$, so they are orthogonal. Consequently the preceding identity extends to
\begin{equation}\label{eq:homogeneous-shift-sum}
\sum_{\abs\beta=k}c_\beta\norm{S^{*\beta}h_m}_2^2
=\frac{(m)_k}{(n+m)_k}\norm{h_m}_2^2.
\end{equation}

We next sum over $k=\abs\beta$. For $1\leq k\leq m$, direct cancellation gives
\[
\frac{(m)_k}{(n+m)_k}
=\frac m{n+1}\left[\frac{(m-1)_{k-1}}{(n+m)_{k-1}}
-\frac{(m-1)_k}{(n+m)_k}\right].
\]
To check the identity, factor the first fraction inside the brackets; the remaining factor is
$1-(m-k)/(n+m+1-k)=(n+1)/(n+m+1-k)$. All denominators are positive. The bracketed fractions start at $1$ when $k-1=0$ and end at $0$ when $k=m$. Therefore
\begin{equation}\label{eq:sum-lengths}
\sum_{k=1}^m\frac{(m)_k}{(n+m)_k}=\frac m{n+1}.
\end{equation}
Combining \eqref{eq:homogeneous-shift-sum} and \eqref{eq:sum-lengths} proves the summation identity \eqref{eq:shift-sum} on the Hardy space $H^2(\Ball)$ for a single homogeneous function.

To pass to a general $h$, consider its spectral truncations $h^{(M)}=\sum_{m=0}^Mh_m$. For a fixed $\beta$, the nonzero functions $S^{*\beta}h_m$ have distinct homogeneous degrees $m-\abs\beta$. Hence
\[
\norm{S^{*\beta}h^{(M)}}_2^2
=\sum_{m=\abs\beta}^M\norm{S^{*\beta}h_m}_2^2.
\]
All sums are finite at this stage. Summing first in $\abs\beta=k$ and then in $k$, the preceding identities give
\[
\sum_{\beta\ne0}c_\beta\norm{S^{*\beta}h^{(M)}}_2^2
=\frac1{n+1}\sum_{m=1}^M m\norm{h_m}_2^2.
\]
For each fixed $\beta$, the squared norm on the left increases with $M$ by the orthogonal decomposition just written. It converges to $\norm{S^{*\beta}h}_2^2$, since $S^{*\beta}$ is bounded and $h^{(M)}\to h$ in $H^2$. Monotone convergence over the multi-indices now proves the summation identity \eqref{eq:shift-sum} on the Hardy space $H^2(\Ball)$.
\end{proof}

We now use the summation identity \eqref{eq:shift-sum} on the Hardy space $H^2(\Ball)$ and the zero-moment condition to bound $\Scal(h)$, the weighted sum of the squared moments of $\abs h^2$, in terms of $\Rcal(h)$.

\begin{theorem}\label{thm:moment-estimate}
Let $n\geq1$. Suppose that $h\in H^2(\Ball)$ satisfies
\[
\norm h_2=1,\qquad \ip h{S_jh}=0\quad(1\leq j\leq n+1),\qquad
\Rcal(h)<\infty.
\]
Define $s_k$ by \eqref{eq:sk} with $\rho=\abs h^2$. Then $\Scal(h)<\infty$ and
\begin{equation}\label{eq:moment-estimate}
\Rcal(h)\geq2(n+1)\Scal(h).
\end{equation}
The coefficient $2n+2$ cannot be increased. Equality holds if and only if $h$ is constant.
\end{theorem}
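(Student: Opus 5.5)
The plan is to rewrite each moment in \eqref{eq:sk} as a Hardy-space inner product, bound it by Cauchy--Schwarz, and sum the bounds with Lemma~\ref{lem:shift-sum}. Since $\rho=\abs h^2$ and $\zeta^\beta$ is the boundary value of $z^\beta$,
\[
\int_{\Sph}\zeta^\beta\abs h^2\dd=\ip h{S^\beta h}=\ip{S^{*\gamma}h}{S^{\delta}h}
\]
for every splitting $\beta=\gamma+\delta$. Then $s_k=\sum_{\abs\beta=k}c_\beta\abs{\ip h{S^\beta h}}^2$. The hypothesis $\ip h{S_jh}=0$ says $h\perp S_jh$ for every $j$, and $\norm h_2=1$. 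Finiteness of $\Scal(h)$ already follows from the crude bound $\abs{\ip{S^{*\beta}h}h}^2\le\norm{S^{*\beta}h}_2^2$, summed with weights $c_\beta$ via \eqref{eq:shift-sum}:
\[
\Scal(h)\le\frac1{n+1}\Rcal(h)-\sum_j\norm{S_j^*h}_2^2<\infty .
\]
This crude bound loses exactly a factor of two, and the real work is to recover it from the zero-moment condition.

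To gain the factor $2$, I would decompose by homogeneous degree: $\ip h{S^\beta h}=\sum_{m\ge k}\ip{S^{*\beta}h_m}{h_{m-k}}$ for $\abs\beta=k$. The term $m=k$ pairs $S^{*\beta}h_k$ with the constant $h_0$. Writing $h=h_0+\tilde h$, I would first show that the moment condition forces $\abs{h_0}^2$ to be small relative to $\tilde h$, since $0=\ip h{S_jh}$ couples $h_0$ to $S_j^*\tilde h$. Then I would apply Cauchy--Schwarz to the pairing $\ip{S^{*\beta}h}{h-\ip{e_0}h e_0}$, with $e_0=1$ the constant function, possibly after splitting $\beta=\gamma+\varepsilon_j$ and moving one shift to the other side, so that orthogonality to $S_jh$ can be inserted. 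The target is an inequality of the form
\[
\abs{\ip h{S^\beta h}}^2\le\tfrac12\bigl(\norm{S^{*\beta}h}_2^2+\text{(terms summing to at most the degree-one contribution)}\bigr).
\]
Summed with $c_\beta$ over $\abs\beta\ge2$, this should yield \eqref{eq:moment-estimate} after using \eqref{eq:shift-sum} and \eqref{eq:homogeneous-shift-sum}. I expect this step to be the main obstacle. My first concrete attempt would be to treat $\ip h{S^\beta h}$ and its conjugate symmetrically, average the two Cauchy--Schwarz estimates, and check the resulting degree-by-degree weights against \eqref{eq:sum-lengths}, adjusting the splitting until the constants match.

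For the equality case, I would trace back the Cauchy--Schwarz steps. Equality forces collinearity of vectors built from $z_1h$ and $z_2h$, which is exactly the place where $n\ge1$ (so that at least two coordinates exist) is used. These functions are linearly independent unless $h\equiv0$ away from the constant part. This gives $\tilde h=0$, so $h$ is constant. Conversely, constants satisfy $\Rcal=\Scal=0$.

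For sharpness of the coefficient $2n+2$, I would test against $h_t=(1+t\,p)/\norm{1+t\,p}_2$ with $p$ a degree-two monomial such as $z_1^2$ or $z_1z_2$, corrected if necessary to keep $\ip{h_t}{S_jh_t}=0$; the symmetry $p(-z)=p(z)$ should make the first moments vanish automatically. Expanding in $t$, one gets $\Rcal(h_t)=2t^2\norm p_2^2+O(t^3)$ and $s_2=t^2\norm p_2^4\,\cdot(\text{explicit constant})+O(t^3)$. The ratio should tend to $2(n+1)$, which shows the constant cannot be increased.
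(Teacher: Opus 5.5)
Your proposal has two genuine gaps.

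\textbf{The factor two is never proved.} Your crude bound is correct. Beyond it, however, the factor two is only a target to be reached by ``adjusting the splitting''. One of your proposed ingredients is also false. The zero-moment condition cannot make $\abs{h_0}^2$ small relative to $\tilde h$: $h\equiv1$ is admissible, and so is $h_t=(1+tz_1^2)/\norm{1+tz_1^2}_2$, for which $\abs{h_0}^2\to1$ while $\tilde h=O(t)$.

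The missing idea is to remove from $S^{*\beta}h$ its component along $h$. Set $m_\beta=\ip h{S^\beta h}$ and $r_\beta=S^{*\beta}h-\overline{m_\beta}h\perp h$. Then $\norm{S^{*\beta}h}_2^2=\abs{m_\beta}^2+\norm{r_\beta}_2^2$. Since $m_{\varepsilon_j}=0$, \eqref{eq:shift-sum} becomes the exact splitting $\Rcal(h)/(n+1)=\Scal(h)+\Tcal(h)$, with $\Tcal(h)=\sum_{\beta\ne0}c_\beta\norm{r_\beta}_2^2$. The factor two is then the inequality $\Scal\le\Tcal$.

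Your idea of moving one shift across is exactly what proves this. Because $h\perp S_jh$, we have $m_{\beta+\varepsilon_j}=\ip{S^{*\beta}h}{S_jh}=\ip{r_\beta}{S_jh}$. Cauchy--Schwarz together with $\sum_j\norm{S_jh}_2^2=\norm h_2^2=1$ gives $\sum_j\abs{m_{\beta+\varepsilon_j}}^2\le\norm{r_\beta}_2^2$. Reindexing by $\gamma=\beta+\varepsilon_j$ and using $\sum_{j:\gamma_j>0}c_{\gamma-\varepsilon_j}=c_\gamma$ sums this to $\Scal\le\Tcal$. Note that this couples moments of order $k+1$ to residuals of order $k$. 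It is not a separate bound for each $\beta$ of the kind you target.

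Your equality argument also needs exactly this chain. Equality forces equality in every Cauchy--Schwarz step. A nonzero $r_\beta$ would then place both $z_1h$ and $z_2h$ in $\C r_\beta$, which is impossible. Hence all $r_\beta=0$, so $\Tcal=\Scal=0$ and therefore $\Rcal(h)=0$.

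\textbf{Your sharpness family fails.} Take $h_t=(1+tp)/\norm{1+tp}_2$ with $p$ homogeneous of degree $k\ge2$. Every moment of order different from $k$ vanishes exactly, and $m_\beta=t\ip p{z^\beta}/\norm{1+tp}_2^2$ for $\abs\beta=k$. The reproducing kernel \eqref{eq:reproducing-kernel} gives $\Scal(h_t)=t^2\norm p_2^2/(d_k\norm{1+tp}_2^4)$, while $\Rcal(h_t)=kt^2\norm p_2^2/\norm{1+tp}_2^2$. Hence, for every $t$ and not only as $t\to0$,
\[
\frac{\Rcal(h_t)}{\Scal(h_t)}=kd_k\bigl(1+t^2\norm p_2^2\bigr)\geq2d_2=(n+1)(n+2)>2(n+1)\qquad(n\geq1).
\]
For $p=z_1^2$, for instance, the ratio is $2\norm{1+tz_1^2}_2^2/\norm{z_1^2}_2^2$. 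Perturbations of a constant only detect the local threshold $\alpha=1/(n+2)$ of Remark~\ref{rem:hessian}, read through \eqref{eq:Euler-moment}.

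To approach $2(n+1)$, the functions must concentrate. The paper takes $u_r$ proportional to $(1-rz_1)^{-(n+1)}+(1+rz_1)^{-(n+1)}$, the sum of the Szeg\H{o} kernels at $\pm(r,0,\ldots,0)$. Here $S_1^*$ acts on the two kernels by $\pm r$, and $S_j^*$ kills them for $j\ge2$. This yields $\Scal(u_r)=r^4/(1-r^4)$ and $\Rcal(u_r)/(n+1)=(r^4+q_rr^2)/(1-r^4)$, where $q_r=(A_r-B_r)/(A_r+B_r)\to1$, with $A_r=(1-r^2)^{-(n+1)}$ and $B_r=(1+r^2)^{-(n+1)}$. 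The ratio therefore tends to $2(n+1)$ as $r\uparrow1$.
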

\begin{proof}
We decompose $S^{*\beta}h$ into its orthogonal projection onto $\C h$ and the remainder. For $\beta\ne0$, set
\begin{equation}\label{eq:moment-residual}
m_\beta=\ip h{S^\beta h}=\int\zeta^\beta\abs h^2\dd,
\qquad r_\beta=S^{*\beta}h-\overline{m_\beta}\,h.
\end{equation}
The moments are well defined because $\abs h^2\in L^1$. Thus the theorem does not require an $L^2$ density. The adjoint identity, together with our convention that the inner product is conjugate-linear in its first argument, gives
\[
\ip h{S^{*\beta}h}=\ip{S^\beta h}h=\overline{m_\beta},
\qquad \ip h{r_\beta}=0.
\]
Since $\norm h_2=1$, we obtain
\begin{equation}\label{eq:shift-decomposition}
\norm{S^{*\beta}h}_2^2=\abs{m_\beta}^2+\norm{r_\beta}_2^2.
\end{equation}
By the summation identity \eqref{eq:shift-sum} on the Hardy space $H^2(\Ball)$, both weighted sums on the right are finite. Since $m_{\varepsilon_j}=0$ for $1\leq j\leq n+1$, \eqref{eq:sk} and \eqref{eq:shift-decomposition} give
\begin{equation}\label{eq:S-T}
\Scal(h)=\sum_{\abs\beta\geq2}c_\beta\abs{m_\beta}^2,\qquad
\Tcal(h)=\sum_{\beta\ne0}c_\beta\norm{r_\beta}_2^2,\qquad
\frac1{n+1}\Rcal(h)=\Scal(h)+\Tcal(h).
\end{equation}

The zero-moment condition allows us to express the higher moments in terms of these remainders. For $\beta\ne0$ and $1\leq j\leq n+1$, the adjoint identity gives
\[
m_{\beta+\varepsilon_j}=\ip{S^{*\beta}h}{S_jh}.
\]
Inserting \eqref{eq:moment-residual} removes the component parallel to $h$, because $\ip h{S_jh}=0$. Hence
\[
m_{\beta+\varepsilon_j}=\ip{r_\beta}{S_jh}.
\]
For $\abs\gamma=k\geq2$ and $\gamma_j>0$,
\[
c_{\gamma-\varepsilon_j}
=\frac{(k-1)!}{(\gamma_j-1)!\prod_{i\ne j}\gamma_i!}
=\frac{\gamma_j}{k}c_\gamma.
\]
Therefore $\sum_{j:\gamma_j>0}c_{\gamma-\varepsilon_j}=c_\gamma$. Reindexing the nonnegative series by $\gamma=\beta+\varepsilon_j$, which is justified by Tonelli's theorem, gives
\begin{align}
\Scal(h)
&=\sum_{\beta\ne0}c_\beta\sum_{j=1}^{n+1}\abs{\ip{r_\beta}{S_jh}}^2\notag\\
&\leq\sum_{\beta\ne0}c_\beta\norm{r_\beta}_2^2
\sum_{j=1}^{n+1}\norm{S_jh}_2^2=\Tcal(h).
\label{eq:S-bounded-by-T}
\end{align}
Here each estimate is Cauchy--Schwarz, and
$\sum_j\norm{S_jh}_2^2=\int\sum_j\abs{\zeta_j}^2\abs h^2\dd=1$.
Combining \eqref{eq:S-bounded-by-T} with \eqref{eq:S-T} proves \eqref{eq:moment-estimate}.

We next determine when equality holds. For every pair $(\beta,j)$ with $\beta\ne0$, define
\[
\Delta_{\beta,j}=\norm{r_\beta}_2^2\norm{S_jh}_2^2
-\abs{\ip{r_\beta}{S_jh}}^2\geq0.
\]
For each positive integer $L$, the reindexing $\gamma=\beta+\varepsilon_j$ used above gives the finite identity
\[
\sum_{1\leq|\beta|\leq L}c_\beta\sum_{j=1}^{n+1}\Delta_{\beta,j}
=\sum_{1\leq|\beta|\leq L}c_\beta\norm{r_\beta}_2^2
 -\sum_{2\leq|\gamma|\leq L+1}c_\gamma|m_\gamma|^2.
\]
The upper degree in the last sum is $L+1$ because one coordinate is added to $\beta$. Both series on the right converge to finite values by \eqref{eq:S-T}. Letting $L\to\infty$ therefore gives
\[
\Tcal(h)-\Scal(h)
=\sum_{\beta\ne0}c_\beta\sum_{j=1}^{n+1}\Delta_{\beta,j}.
\]
Equality in \eqref{eq:moment-estimate} means $\Tcal(h)=\Scal(h)$. Every weight $c_\beta$ is positive; hence every $\Delta_{\beta,j}$ is zero.

If some $r_\beta$ were nonzero, equality in each Cauchy--Schwarz estimate would force all the functions $S_jh=z_jh$ to belong to the one-dimensional space $\C r_\beta$. But $z_1h$ and $z_2h$ are linearly independent. Indeed, a relation $(a_1z_1+a_2z_2)h=0$ in $H^2$ is a holomorphic identity in the ball. Since $h$ is not identically zero, it is nonzero on a nonempty open set, on which the linear polynomial must vanish. Thus $a_1=a_2=0$. This contradiction uses $n\geq1$.

It follows that all $r_\beta$ vanish, and therefore $\Tcal(h)=0$. Equality gives $\Scal(h)=0$, and \eqref{eq:S-T} gives $\Rcal(h)=0$. All positive-degree homogeneous components of $h$ vanish, so $h$ is constant. Conversely, a constant of norm one satisfies the hypotheses and gives equality.

It remains to prove that the coefficient $2n+2$ in \eqref{eq:moment-estimate} is optimal. We use the following two reproducing kernels:
\[
k_{r,+}(z)=(1-rz_1)^{-(n+1)},\qquad
k_{r,-}(z)=(1+rz_1)^{-(n+1)},\qquad 0<r<1.
\]
The kernel formula \eqref{eq:reproducing-kernel}, summed over all degrees, gives
\[
\norm{k_{r,+}}_2^2=\norm{k_{r,-}}_2^2=A_r:=(1-r^2)^{-(n+1)},
\qquad \ip{k_{r,+}}{k_{r,-}}=B_r:=(1+r^2)^{-(n+1)}.
\]
Define
\[
u_r=\frac{k_{r,+}+k_{r,-}}{\sqrt{2(A_r+B_r)}},\qquad
v_r=\frac{k_{r,+}-k_{r,-}}{\sqrt{2(A_r+B_r)}},\qquad
q_r=\frac{A_r-B_r}{A_r+B_r}.
\]
Then $\norm{u_r}_2=1$, $\ip{u_r}{v_r}=0$, and $\norm{v_r}_2^2=q_r$. Both functions are smooth on the closed ball. For the kernel $k_a(z)=(1-z\cdot\bar a)^{-(n+1)}$, our inner-product convention gives $\ip{k_a}g=g(a)$. Thus for every $g\in H^2$,
\[
\ip{S_j^*k_a}g=\ip{k_a}{S_jg}=a_jg(a)
=\ip{\overline{a_j}k_a}g.
\]
Consequently $S_j^*k_a=\overline{a_j}k_a$. For the two real points $a=\pm re_1$, this becomes
\[
S_1^*k_{r,\pm}=\pm r k_{r,\pm},\qquad
S_j^*k_{r,\pm}=0\quad(j\geq2).
\]
$(S_1^*)^ku_r=r^ku_r$ for even $k$ and $(S_1^*)^ku_r=r^kv_r$ for odd $k$; $S_j^*u_r=0$ for $2\leq j\leq n+1$. Thus $u_r$ has zero first moments, and its only nonzero positive-degree moments are
$m_{2m\varepsilon_1}=r^{2m}$, and
\[
\Scal(u_r)=\frac{r^4}{1-r^4},\qquad
\frac1{n+1}\Rcal(u_r)=\frac{r^4+q_rr^2}{1-r^4},
\]
where the second equality follows from the summation identity \eqref{eq:shift-sum} on the Hardy space $H^2(\Ball)$. Its hypothesis $\Rcal(u_r)<\infty$ holds because $u_r$ is smooth on the closed ball; alternatively its homogeneous coefficients decay geometrically for each fixed $r<1$. Since $q_r\to1$ as $r\uparrow1$,
\begin{equation}\label{eq:moment-sharpness}
\frac{\Rcal(u_r)}{\Scal(u_r)}
=(n+1)\left(1+\frac{q_r}{r^2}\right)\longrightarrow2(n+1).
\end{equation}
Hence the coefficient cannot be increased. Theorem~\ref{thm:moment-estimate} applies to this family without requiring $u_r$ to be nonvanishing inside the ball. For comparison, if $n=0$, then $q_r=r^2$ and the ratio in \eqref{eq:moment-sharpness} equals $2$ for every $r$. The same nonconstant functions then give equality, explaining why the equality argument above requires $n\geq1$.
\end{proof}

Finally, combining the moment identity in Proposition~\ref{prop:Euler-moment} with Theorem~\ref{thm:moment-estimate}, we classify the extremals.

\begin{proposition}\label{prop:extremal-classification}
Let $n\geq1$ and $1/2\leq\alpha\leq1$. Every minimizer of $\Jf$ on $\Lzero$ is constant.
\end{proposition}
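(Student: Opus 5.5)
Let $F$ be a minimizer of $\Jf$ on $\Lzero$ with $1/2\leq\alpha\leq1$. The plan is to feed $F$ through the chain Propositions~\ref{prop:Euler}, \ref{prop:eliminate}, \ref{prop:Euler-moment} and then compare with Theorem~\ref{thm:moment-estimate}.

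First, since adding constants preserves both $\Jf$ and the constraint, we normalize $Z(F)=1$. As observed before Proposition~\ref{prop:Euler}, every minimizer on $\Lzero$ is a constrained critical point in the sense defined there: the correction maps of Lemma~\ref{lem:correction} realize each tangent vector in $\mathcal T_F$ as the velocity of an admissible curve, and $t\mapsto\Jf(F_t)$ has a minimum at $t=0$. Proposition~\ref{prop:Euler} then gives smoothness of $F$ and the multiplier equation \eqref{eq:Euler-multiplier}. Since $0<\alpha\leq1$, Proposition~\ref{prop:eliminate} removes the multiplier and yields \eqref{eq:Euler-free}. We pass to the holomorphic reduction \eqref{eq:holomorphic-reduction}, with $f$, $h=e^f$ and $\rho=\abs h^2$. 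Here $h$ is smooth on the closed ball, so $\Rcal(h)<\infty$. Moreover $\norm h_2^2=Z(F)=1$, and $\ip h{S_jh}=0$ follows from the zero-moment condition.

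Proposition~\ref{prop:Euler-moment} gives $\Rcal(h)=\frac{n+1}{\alpha}\Scal(h)$. Theorem~\ref{thm:moment-estimate} applies to this $h$ and gives $\Rcal(h)\geq 2(n+1)\Scal(h)$. Combining the two, we get
\[
\frac{n+1}{\alpha}\Scal(h)\geq 2(n+1)\Scal(h),
\qquad\text{i.e.}\qquad (1-2\alpha)\Scal(h)\geq0.
\]
We then split into two cases.
\begin{enumerate}
\item If $1/2<\alpha\leq1$, then $1-2\alpha<0$. Together with $\Scal(h)\geq0$, this forces $\Scal(h)=0$, so $\Rcal(h)=0$. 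Hence $h$ has no positive-degree homogeneous components and is constant.
\item If $\alpha=1/2$, the identity reads $\Rcal(h)=2(n+1)\Scal(h)$. This is exactly the equality case of \eqref{eq:moment-estimate}, and Theorem~\ref{thm:moment-estimate} says $h$ is constant.
\end{enumerate}
In either case $f=\log h$ is constant on the sphere, since $f$ is the holomorphic function with $h=e^f$ and $h$ is a nonvanishing constant. Therefore $F=f+\bar f$ is constant, and undoing the mass normalization only adds a constant.

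The main point to check is the first step at the endpoint $\alpha=1/2$. Proposition~\ref{prop:Euler} is stated for all $\alpha>0$, and the elimination in Proposition~\ref{prop:eliminate} requires only $0<\alpha\leq1$, so no subcritical existence theory is needed: we only assume a minimizer exists. I would also verify that the reduction to \eqref{eq:holomorphic-reduction} uses only $Z(F)=1$ and the zero moment, both of which hold after normalization. The same argument in fact applies verbatim to any constrained critical point, which yields Corollary~\ref{cor:critical-rigidity}.
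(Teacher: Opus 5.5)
Your proposal is correct and follows the paper's proof essentially step for step. You normalize $Z(F)=1$, use that a minimizer is a constrained critical point, and invoke Propositions~\ref{prop:Euler} and~\ref{prop:eliminate} for smoothness and $\ell=0$; you then compare $\Rcal(h)=\frac{n+1}{\alpha}\Scal(h)$ from Proposition~\ref{prop:Euler-moment} with Theorem~\ref{thm:moment-estimate}, using the strict inequality for $\alpha>1/2$ and the equality case at $\alpha=1/2$, which also yields Corollary~\ref{cor:critical-rigidity} exactly as the paper remarks.
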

\begin{proof}
Let $F$ be a minimizer of $\Jf$ on $\Lzero$. Then $F$ is a constrained critical point in $\Dn$. By adding a constant, we may assume $Z(F)=1$. Propositions~\ref{prop:Euler} and~\ref{prop:eliminate} give smoothness and vanishing of the moment multiplier, respectively. The function $h=e^f$ in \eqref{eq:holomorphic-reduction} satisfies the hypotheses of Theorem~\ref{thm:moment-estimate}. Proposition~\ref{prop:Euler-moment} gives
\[
\frac{n+1}\alpha\Scal(h)=\Rcal(h)\geq2(n+1)\Scal(h).
\]
If $\alpha>1/2$ and $\Scal(h)>0$, division by $(n+1)\Scal(h)$ would imply $1/\alpha\geq2$, a contradiction. Thus $\Scal(h)=0$ and $\Rcal(h)=0$ in this case. If $\alpha=1/2$, the displayed identity is equality in Theorem~\ref{thm:moment-estimate}. In either case $h$ is constant. Since $h=e^f$ is nonzero, $F=\log\abs h^2$ is constant as well.
\end{proof}

The preceding proof uses the constrained criticality of $F$ and therefore also establishes Corollary~\ref{cor:critical-rigidity}.

\section{Proof of the Moser--Onofri inequality under zero-moment constraints}
In this section, we complete the proof of Theorem~\ref{thm:main}. We first use the existence and classification of extremals of the subcritical Moser--Onofri inequality under zero-moment constraints to establish $\Jf(F)\geq0$ for $\alpha>1/2$. Passing to the limit $\alpha\downarrow1/2$ then gives the sharp Moser--Onofri inequality under zero-moment constraints. We also characterize equality and prove that the coefficient is sharp by constructing a family whose normalized exponential densities concentrate at two antipodal points.

We begin with $1/2<\alpha\leq1$. Proposition~\ref{prop:existence} gives a minimizer of $\Jf$ on $\Lzero$, and Proposition~\ref{prop:extremal-classification} shows that it is constant. The value of $\Jf$ at a constant is zero. Therefore
\[
\Jf(F)\geq0\qquad(F\in\Lzero,\ 1/2<\alpha\leq1).
\]
If equality holds, $F$ is itself a global minimizer and hence is constant by Proposition~\ref{prop:extremal-classification}. When $\alpha>1$, the identity
\[
\Jf(F)=\Jone(F)+\frac{\alpha-1}{2(n+1)!}\E(F)
\]
proves nonnegativity. Equality forces $\E(F)=0$, and the spectrum \eqref{eq:spectrum} then implies that $F$ is constant.

We next consider the endpoint $\alpha=1/2$. Fix an arbitrary $F\in\Lzero$. Since $\E(F)<\infty$, the function $\alpha\mapsto\Jf(F)$ is affine and continuous. Letting $\alpha\downarrow1/2$ in the inequalities already proved gives
\[
\Jhalf(F)\geq0\qquad(F\in\Lzero).
\]
The limit is taken with the admissible function fixed.

We now determine the equality cases in Theorem~\ref{thm:main}. Suppose that $F\in\Lzero$ satisfies $\Jhalf(F)=0$. The inequality just proved shows that $F$ is a global minimizer of $\Jhalf$ on $\Lzero$, and hence a constrained critical point in $\Dn$. Since adding a constant preserves both the functional and the zero-moment condition, we may normalize $Z(F)=1$. Propositions~\ref{prop:Euler} and~\ref{prop:eliminate}, applied with $\alpha=1/2$, show that $F$ is smooth and satisfies \eqref{eq:Euler-free}.

Write $F=f+\bar f$ and $h=e^f$ as in \eqref{eq:holomorphic-reduction}. Then $h$ is holomorphic and smooth on the closed ball, $\norm h_2=1$, and $\int_{\Sph}\zeta\abs h^2\dd=0$. In particular, $\Rcal(h)<\infty$, so Theorem~\ref{thm:moment-estimate} applies. Proposition~\ref{prop:Euler-moment}, with $\alpha=1/2$, gives
\[
\Rcal(h)=2(n+1)\Scal(h).
\]
Thus equality holds in \eqref{eq:moment-estimate}. Since $n\geq1$, the equality statement in Theorem~\ref{thm:moment-estimate} forces $h$ to be constant. The normalization $\norm h_2=1$ then gives $e^F=\abs h^2=1$, hence $F=0$. Undoing the normalization shows that the original function $F$ is constant.

Conversely, every constant function $F\equiv c$ belongs to $\Lzero$ and satisfies
\[
\E(c)=0,\qquad Z(c)=e^c,\qquad
\Jhalf(c)=c-\log(e^c)=0.
\]
Therefore equality in Theorem~\ref{thm:main} holds if and only if $F$ is constant.

It remains to show that the coefficient $1/(4(n+1)!)$ in Theorem~\ref{thm:main} cannot be decreased. For $0<r<1$, define
\begin{equation}\label{eq:two-pole}
F_r(\zeta)=-2(n+1)\log\abs{1-r^2\zeta_1^2}.
\end{equation}
The factor $1-r^2z_1^2$ has no zero in a neighborhood of the closed ball. The logarithm with value zero at the origin gives the holomorphic function
\begin{equation}\label{eq:two-pole-series}
f_r(z)=-(n+1)\log(1-r^2z_1^2)
=(n+1)\sum_{m\geq1}\frac{r^{2m}}m z_1^{2m},\qquad F_r=f_r+\bar f_r.
\end{equation}
Thus $F_r$ is smooth and real pluriharmonic, with $\mean{F_r}=0$. The last assertion follows because every nonconstant holomorphic monomial has zero mean. Also $F_r(-\zeta)=F_r(\zeta)$, so the change of variables $\zeta\mapsto-\zeta$ gives
\[
\int_{\Sph}\zeta e^{F_r(\zeta)}\dd=-\int_{\Sph}\zeta e^{F_r(\zeta)}\dd=0.
\]
Hence $F_r\in\Lzero$ for every $0<r<1$.

To evaluate $\Jf(F_r)$, we first compute its energy. Write $L_r=-\log(1-r^4)$. The spectral and monomial formulas give, for $k\geq1$,
\[
\lambda_k\norm{z_1^k}_2^2
=\frac{(k+n)!}{(k-1)!}\frac{n!k!}{(k+n)!}
=kn!.
\]
The holomorphic and antiholomorphic components of every positive degree are orthogonal. Substitution of \eqref{eq:two-pole-series} into \eqref{eq:form-domain} gives
\begin{align}
\frac1{2(n+1)!}\E(F_r)
&=\frac1{(n+1)!}\sum_{m\geq1}\lambda_{2m}
\frac{(n+1)^2r^{4m}}{m^2}\norm{z_1^{2m}}_2^2\notag\\
&=\frac1{(n+1)!}\sum_{m\geq1}\frac{(n+1)^2r^{4m}}{m^2}\,2mn!
=2(n+1)\sum_{m\geq1}\frac{r^{4m}}m=2(n+1)L_r.
\label{eq:two-pole-energy}
\end{align}
The series are absolutely convergent for each $r<1$.

We next determine the asymptotic behavior of $Z(F_r)$ as $r\uparrow1$. The standard CR Jacobian formula, with normalized surface measure, gives
\[
I_r:=\int_{\Sph}|1-r\zeta_1|^{-2(n+1)}\dd=(1-r^2)^{-(n+1)}.
\]
Indeed, $(1-r^2)^{n+1}|1-r\zeta_1|^{-2(n+1)}$ is the Jacobian of a boundary CR automorphism and has integral one; see \cite{BFM}. We will prove the more precise two-pole asymptotic
\begin{equation}\label{eq:two-pole-mass-asymptotic}
Z(F_r)=2^{-n}(1-r^4)^{-(n+1)}\bigl(1+o(1)\bigr)
\qquad(r\uparrow1).
\end{equation}

Let $p_+=(1,0,\ldots,0)$, $H_+=\{\Rea\zeta_1\geq0\}$, and $H_-=-H_+$. Define
\[
p_r(\zeta)=I_r^{-1}|1-r\zeta_1|^{-2(n+1)},\qquad
 g_r(\zeta)=|1+r\zeta_1|^{-2(n+1)}\quad(\zeta\in H_+).
\]
The function $p_r$ is nonnegative and has integral one. For every fixed sufficiently small open neighborhood $U$ of $p_+$, the continuous function $|1-\zeta_1|$ has a strictly positive minimum on the compact set $\Sph\setminus U$, since $\zeta_1=1$ on the sphere only at $p_+$. Denote this minimum by $\delta_U>0$. If $1-r\leq\delta_U/2$, then
$|1-r\zeta_1|\geq|1-\zeta_1|-(1-r)\geq\delta_U/2$ on $\Sph\setminus U$. Consequently,
\begin{equation}\label{eq:pole-concentration}
\int_{\Sph\setminus U}p_r\dd
\leq\left(\frac2{\delta_U}\right)^{2(n+1)}(1-r^2)^{n+1}\longrightarrow0.
\end{equation}
The number $\delta_U$ is fixed before $r\uparrow1$.

On $H_+$ we have $|1+r\zeta_1|\geq1$. Hence $0<g_r\leq1$, and $g_r$ converges uniformly on $H_+$ to $g_1(\zeta)=|1+\zeta_1|^{-2(n+1)}$. In particular $g_1(p_+)=2^{-2(n+1)}$. We next justify the integral limit using this concentration. Given $\epsilon>0$, choose an open neighborhood $U\subset\{\Rea\zeta_1>0\}$ of $p_+$ so that
$|g_1(\zeta)-2^{-2(n+1)}|\leq\epsilon$ on $U$. For $r$ sufficiently close to $1$, uniform convergence gives $|g_r-2^{-2(n+1)}|\leq2\epsilon$ there. Since $\int p_r\dd=1$,
\[
\left|\int_{H_+}g_rp_r\dd-2^{-2(n+1)}\right|
\leq2\epsilon+3\int_{\Sph\setminus U}p_r\dd.
\]
Letting $r\uparrow1$ and then $\epsilon\downarrow0$, and using \eqref{eq:pole-concentration}, yields
\[
\frac1{I_r}\int_{H_+}e^{F_r}\dd
=\int_{H_+}g_rp_r\dd\longrightarrow2^{-2(n+1)}.
\]
The change of variables $\zeta\mapsto-\zeta$ shows that the two hemisphere integrals of $e^{F_r}$ are equal. Their common boundary has surface measure zero, so
\[
\frac{Z(F_r)}{I_r}\longrightarrow2^{-2n-1}.
\]
Finally $I_r=(1+r^2)^{n+1}(1-r^4)^{-(n+1)}$ and $(1+r^2)^{n+1}\to2^{n+1}$. This gives \eqref{eq:two-pole-mass-asymptotic} and shows that the leading contribution comes from neighborhoods of the two antipodal poles. Taking logarithms gives
\begin{equation}\label{eq:two-pole-logmass}
\log Z(F_r)=(n+1)L_r-n\log2+o(1).
\end{equation}

Combining \eqref{eq:two-pole-energy} and \eqref{eq:two-pole-logmass}, and using $\mean{F_r}=0$, we obtain
\begin{equation}\label{eq:sharpness}
\Jf(F_r)=(2\alpha-1)(n+1)L_r+ n\log2+o(1).
\end{equation}
As $L_r\to\infty$, every $0<\alpha<1/2$ gives $\Jf(F_r)\to-\infty$. This shows that the endpoint value $\alpha=1/2$ is optimal in Theorem~\ref{thm:main}.

At the endpoint the same computation gives
\[
\Jhalf(F_r)\longrightarrow n\log2,
\qquad\E(F_r)\longrightarrow\infty.
\]
For $n\geq1$, the endpoint functional $\Jhalf(F_r)$ has a positive limit, so it is not a minimizing sequence for the endpoint problem.

\section{Proof of the stability of the Moser--Onofri inequality on the CR sphere}\label{sec:global-stability}
We deduce Theorem~\ref{thm:stability} directly from Theorem~\ref{thm:main}
by the conformal argument of Carlen
\cite{Carlen25}. We keep the original value of $Z(F)$
throughout. The two identities needed are invariance of $\Jone$ under the
affine action and invariance of $\E$ under composition, as recorded in
\eqref{eq:affine-action} and \eqref{eq:energy-composition}.

\begin{proof}[Proof of Theorem~\ref{thm:stability}]
Fix any real $F\in\Dn$. The known exponential estimate gives
$0<Z(F)<\infty$. Apply \eqref{eq:CR-centering-density} to the probability
density $p=e^F/Z(F)$. For the resulting $\tau\in\Aut(\Sph)$, define
\begin{equation}\label{eq:stability-centered-transform}
F_\tau=F\circ\tau+\log J_\tau\in\Dn.
\end{equation}
Multiplying \eqref{eq:CR-centering-density} by $Z(F)$ and using the
Jacobian change of variables gives
\begin{equation}\label{eq:stability-center-and-mass}
\int_{\Sph}\zeta e^{F_\tau}\dd=0,
\qquad Z(F_\tau)=Z(F).
\end{equation}
In particular $F_\tau\in\Lzero$. Theorem~\ref{thm:main} applies without
requiring its exponential mass to be one. Therefore
\begin{equation}\label{eq:stability-centered-bound}
\Jone(F_\tau)
=\Jhalf(F_\tau)+\frac{1}{4(n+1)!}\E(F_\tau)
\geq\frac{1}{4(n+1)!}\E(F_\tau).
\end{equation}

The inverse-Jacobian formula in Subsection~\ref{sec:automorphisms} says
\[
(\log J_{\tau^{-1}})\circ\tau=-\log J_\tau.
\]
Consequently
\begin{equation}\label{eq:stability-difference}
F_\tau=(F-\log J_{\tau^{-1}})\circ\tau,
\qquad
\E(F_\tau)=\E(F-\log J_{\tau^{-1}}),
\end{equation}
where the energy identity is \eqref{eq:energy-composition}. Since
$\log J_{\tau^{-1}}\in\mathcal M$, conformal invariance of $\Jone$ and
\eqref{eq:stability-centered-bound} imply
\begin{align}
\Jone(F)
&=\Jone(F_\tau)\notag\\
&\geq\frac{1}{4(n+1)!}\E(F-\log J_{\tau^{-1}})\notag\\
&\geq\frac{1}{4(n+1)!}\inf_{h\in\mathcal M}\E(F-h).
\label{eq:global-stability-chain}
\end{align}
This is \eqref{eq:stability}. It does not require the infimum over
$\mathcal M$ to be attained.

Suppose that equality holds in \eqref{eq:stability}. The first and last
terms of \eqref{eq:global-stability-chain} are then equal, so equality
must also hold in \eqref{eq:stability-centered-bound}. Thus
$\Jhalf(F_\tau)=0$. The equality statement of Theorem~\ref{thm:main}
gives $F_\tau\equiv c$ for some $c\in\R$. Applying the inverse action,
\[
F=c+\log J_{\tau^{-1}}\in\mathcal M.
\]
In fact $c=\log Z(F)$ by \eqref{eq:stability-center-and-mass}. Conversely,
if $F\in\mathcal M$, then $\Jone(F)=0$ by Theorem~C, while choosing
$h=F$ makes the infimum in \eqref{eq:stability} zero. Hence every member
of $\mathcal M$, not only the constants, gives the equality.
\end{proof}
{\bf Acknowledgement.} The first was partly supported by National Natural Science Foundation of China (No. 12622108) the second author was partly supported by a grant from Simons Foundation.


\begin{thebibliography}{99}
\setlength{\itemsep}{0pt}
\bibitem{Aubin} T. Aubin, \emph{Meilleures constantes dans le th\'eor\`eme d'inclusion de Sobolev et un th\'eor\`eme de Fredholm non lin\'eaire pour la transformation conforme de la courbure scalaire}, J. Funct. Anal. \textbf{32} (1979), 148--174.
\bibitem{Beckner} W. Beckner, \emph{Sharp Sobolev inequalities on the sphere and the Moser--Trudinger inequality}, Ann. of Math. (2) \textbf{138} (1993), 213--242.
\bibitem{BFM} T. P. Branson, L. Fontana, and C. Morpurgo, \emph{Moser--Trudinger and Beckner--Onofri's inequalities on the CR sphere}, Ann. of Math. (2) \textbf{177} (2013), 1--52.
\bibitem{Carlen17} E. A. Carlen, \emph{Duality and stability for functional inequalities}, Ann. Fac. Sci. Toulouse Math. (6) \textbf{26} (2017), 319--350.
\bibitem{Carlen25} E. A. Carlen, \emph{Stability for the logarithmic Hardy--Littlewood--Sobolev inequality with application to the Keller--Segel equation}, J. Funct. Anal. \textbf{288} (2025), Paper No. 110818.
\bibitem{CY87} S.-Y. A. Chang and P. C. Yang, \emph{Prescribing Gaussian curvature on $S^2$}, Acta Math. \textbf{159} (1987), 215--259.
\bibitem{CY95} S.-Y. A. Chang and P. C. Yang, \emph{Extremal metrics of zeta function determinants on 4-manifolds}, Ann. of Math. (2) \textbf{142} (1995), 171--212.
\bibitem{ChenLuTang} L. Chen, G. Lu, and H. Tang, \emph{Sharp stability of log-Sobolev and Moser--Onofri inequalities on the sphere}, J. Funct. Anal. \textbf{285} (2023), Paper No. 110022.
\bibitem{ChenLuTangWang2026} L. Chen, G. Lu, H. Tang, and B. Wang, \emph{Asymptotically sharp stability of Sobolev inequalities on the Heisenberg group with dimension-dependent constants}, J. Math. Pures Appl. \textbf{206} (2026), 103832.
\bibitem{CL} W. S. Cohn and G. Lu, \emph{Sharp constants for Moser--Trudinger inequalities on spheres in complex space $\C^n$}, Comm. Pure Appl. Math. \textbf{57} (2004), 1458--1493.
\bibitem{Folland} G. B. Folland, \emph{Spherical harmonic expansion of the Poisson--Szeg\H{o} kernel for the ball}, Proc. Amer. Math. Soc. \textbf{47} (1975), 401--408.
\bibitem{FS} G. B. Folland and E. M. Stein, \emph{Estimates for the $\bar\partial_b$ complex and analysis on the Heisenberg group}, Comm. Pure Appl. Math. \textbf{27} (1974), 429--522.
\bibitem{FL} R. L. Frank and E. H. Lieb, \emph{Sharp constants in several inequalities on the Heisenberg group}, Ann. of Math. (2) \textbf{176} (2012), 349--381.
\bibitem{GLWY} C. Gui, T. Li, J. Wei, and Z. Ye, \emph{A positive answer to the generalized Chang--Yang conjecture on $S^N$}, preprint, arXiv:2608.13497 (2026).
\bibitem{GM} C. Gui and A. Moradifam, \emph{The sphere covering inequality and its applications}, Invent. Math. \textbf{214} (2018), 1169--1204.
\bibitem{JL87} D. Jerison and J. M. Lee, \emph{The Yamabe problem on CR manifolds}, J. Differential Geom. \textbf{25} (1987), 167--197.
\bibitem{JL88} D. Jerison and J. M. Lee, \emph{Extremals for the Sobolev inequality on the Heisenberg group and the CR Yamabe problem}, J. Amer. Math. Soc. \textbf{1} (1988), 1--13.
\bibitem{Onofri} E. Onofri, \emph{On the positivity of the effective action in a theory of random surfaces}, Comm. Math. Phys. \textbf{86} (1982), 321--326.
\bibitem{Rudin} W. Rudin, \emph{Function Theory in the Unit Ball of $\C^n$}, Grundlehren Math. Wiss., vol. 241, Springer, New York, 1980.
\bibitem{TaylorI} M. E. Taylor, \emph{Partial Differential Equations I: Basic Theory}, 2nd ed., Applied Mathematical Sciences, vol. 115, Springer, New York, 2011.
\bibitem{TaylorIII} M. E. Taylor, \emph{Partial Differential Equations III: Nonlinear Equations}, 2nd ed., Applied Mathematical Sciences, vol. 117, Springer, New York, 2011.
\bibitem{Zhu} K. Zhu, \emph{Spaces of Holomorphic Functions in the Unit Ball}, Graduate Texts in Mathematics, vol. 226, Springer, New York, 2005.
\end{thebibliography}
\end{document}